\theoremstyle{plain}
\newtheorem{theorem}{Theorem}
\newtheorem{prop}[theorem]{Proposition}
\newtheorem{lemma}[theorem]{Lemma}
\newtheorem{corollary}[theorem]{Corollary}
\theoremstyle{definition}
\newtheorem{definition}[theorem]{Definition}
\numberwithin{theorem}{section} 
\theoremstyle{remark}
\newtheorem*{remark}{Remark}
\DeclareMathOperator{\Tr}{Tr}
\DeclareMathOperator{\Gal}{Gal}
\DeclareMathOperator{\Sym}{Sym}
\DeclareMathOperator{\sgn}{sgn}
\newcommand{\Fq}{\mathbb{F}_q}
\newcommand{\Cp}{\mathbb{C}_p}
\newcommand{\Qp}{\mathbb{Q}_p}
\newcommand{\Qq}{\mathbb{Q}_q}
\newcommand{\Zp}{\mathbb{Z}_p}
\newcommand{\Zq}{\mathbb{Z}_q}
\newcommand{\Zptimes}{\mathbb{Z}_p^\times}
\newcommand{\Zqtimes}{\mathbb{Z}_q^\times}
\DeclareMathOperator{\ordp}{ord}
\newcommand{\ord}[1]{\ordp_p{#1}}
\newcommand{\floor}[1]{\left \lfloor #1 \right \rfloor}
\newcommand{\ceil}[1]{\left \lceil #1 \right \rceil}
\date{\today}
\title{ $T$-adic Exponential Sums over Affinoids }
\author{Matthew Schmidt}
\email{mwschmid@buffalo.edu}
\address{Department of Mathematics, SUNY Buffalo}
\subjclass[2010]{11T23 (primary), 11L07, 13F35}
\keywords{Exponential Sums, T-adic sum, Newton Polygon}
\begin{document}

\begin{abstract}
We introduce and develop $(\pi,p)$-adic Dwork theory for $L$-functions of exponential sums associated to one-variable rational functions, interpolating  $p^k$-order exponential sums over affinoids.  Namely, we prove a generalization of the Dwork-Monsky-Reich trace formula and apply it to establish an analytic continuation of the $C$-function $C_f(s,\pi)$.  We compute the lower  $(\pi,p)$-adic bound, the Hodge polygon, for this $C$-function. Along the way, we also show why a strictly $\pi$-adic theory will not work in this case. 
\end{abstract}

\maketitle 
\tableofcontents

\section{Introduction}

Let $p$ be a prime and $q=p^a$, some integer $a\geq 1$. Fix $\ell\geq 1$ distinct elements $P_1, \cdots, P_\ell\in\Fq\cup\{\infty\}$. Without loss of generality, take $P_1=\infty$ and $P_2=0$, assuming $\ell\geq 2$ for the rest of the paper.
For $x\in\Fq$, denote by $\widehat{x}$ the Teichm\"uller lift of $x$ in $\Zq$. 

Let $E(x)$ be the Artin-Hasse exponential series, $T$ a formal variable and $\pi$ such that $E(\pi)=1+T$. To $f(x)= \sum_{j=1}^\ell\sum_{i=1}^{d_j} \frac{a_{ij}}{(x-\widehat{P}_j)^i}\in \Zq[\frac{1}{x-\widehat{P}_1}, \cdots, \frac{1}{x-\widehat{P}_\ell}]$, $a_{d_j,j}\neq 0$, we associate a $\pi$-adic exponential sum\footnote{The literature (\cite{LiuWan}, \cite{LiuLiuNiu}, \cite{Li}, etc.) generally deals with $T$-adic exponential sums, but for convenience, we will do things $\pi$-adically. There is no difference and our results can be stated either way.}: 
\begin{align}\label{exp_sum_intro}
S_f(k,\pi) = \sum_{\substack{x\in\widehat{\mathbb{F}_{q^k}^\times},\\ x\neq\widehat{P}_1,\cdots, \widehat{P}_\ell }}(E(\pi))^{\Tr_{\mathbb{Q}_{q^k}/\Qp}({f}(x))},
\end{align}
and we say the characteristic function, or $C$-function, attached to this exponential sum is
\[
	C_f(s, \pi) =\exp(\sum_{k=1}^\infty -(q^k-1)^{-1} S_f(k, \pi)\frac{s^k}{k}).
\]

When $T=\zeta_p-1$, $\zeta_p$ a primitive $p$th root of unity, (\ref{exp_sum_intro}) becomes the exponential sum over a one-dimensional affinoid studied by Robba  in \cite{Robba} and Zhu in \cite{Zhu}. Oppositely, letting $\zeta_{p^m}$  be $p^m$th roots of unity and $T=\zeta_{p^m}-1$ yields exponential sums of $p^m$-order over one-dimensional affinoids. In the classical case, these $p^m$-order exponential sums were studied by Liu and Wei in \cite{LiuWei}.  The purpose of $\pi$-adic (and $(\pi,p)$-adic) theory is to interpolate all of these exponential sums in a single $C$-function. Whenever we set $\pi$ to be a value in $c\in\Cp$, we say we specialize at $\pi=c$.

When $f(x)$ has one or two poles, Liu and Wan (\cite{LiuWan}) built a $T$-adic Dwork theory and computed, among other things, a Hodge polygon for this $C$-function. In this paper, we extend their results to the case when $\ell\geq 3$ by generalizing the affinoid Dwork theory used earlier by Zhu in \cite{Zhu}. The bulk of our work is lifting this Dwork theory to the $\pi$-adic case. That is, we construct a Banach module  $\mathcal{Z}^\pi$ and a completely continuous operator $\alpha_a$ on $\mathcal{Z}^\pi$ such that
\[
	C_f(s,\pi)  = \det(1-\alpha_a s).
\]

Unlike Liu and Wan's case, however, a purely $T$-adic theory is not precise enough. When $\ell \geq 3$, the $\alpha_a$ operator is not $\pi$-adically completely continuous and we cannot apply Dwork theory (see Corollary~\ref{not_cc}). To resolve this, we utilize the $(\pi,p)$-adic norm, used for the same reason by Li in \cite{Li}, to produce sharper estimates and make $\alpha_a$ completely continuous. 

Our main result, the computation of the $(\pi,p)$-adic Hodge polygon, is as follows:
For $k=1,\cdots,\ell$, let $\mathrm{HP}_k^c$ be the Newton polygon with vertices $$\{(n, \frac{a(p-1)n(n-1)}{2d_k}c)\}_{n\geq 0},$$ where $c$ is a real number with $0< c\leq \frac{1}{p-1}$.

We define the  $(\pi^{1/c}, p)$-adic Hodge polygon, $\mathrm{HP}^c$, to be the concatenation of $\mathrm{HP}_1^c, \cdots, \mathrm{HP}_\ell^c$. 

\begin{theorem}\label{main_theorem}
The $(\pi^{1/c}, p)$-adic Newton polygon of $C_f(s,\pi)$ lies above $\mathrm{HP^c}$.
\end{theorem}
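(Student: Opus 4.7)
The plan is to use the trace-formula identity $C_f(s,\pi)=\det(1-\alpha_a s)$ established earlier, and to lower-bound the $(\pi^{1/c},p)$-adic Newton polygon of this Fredholm determinant by estimating the matrix of $\alpha_a$ column-by-column in a basis of $\mathcal{Z}^\pi$ adapted to the pole structure of $f$.

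First, I would recall the explicit Banach basis of $\mathcal{Z}^\pi$ indexed by pairs $(k,n)$ with $1\leq k\leq\ell$ and $n\geq 0$, where the $(k,n)$-element is a properly normalized representative of the $n$-th local monomial at the pole $\widehat{P}_k$ (a power of $(x-\widehat{P}_k)^{-1}$ for $k\geq 2$, and a power of $x$ for $k=1$). Writing $\alpha_1=\psi_p\circ F_\pi$ with $F_\pi$ the Dwork splitting function assembled from the Artin--Hasse exponential applied to the partial-fraction parts of $f$, and $\alpha_a$ the corresponding $a$-fold Frobenius composite, the task reduces to controlling how multiplication by $F_\pi$ and a single application of $\psi_p$ distribute $(\pi,p)$-adic mass across this basis.

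The crucial technical estimate I expect to prove is a column-weight inequality: for every entry of the matrix of $\alpha_a$,
\[
v_{(\pi^{1/c},p)}\bigl(M_{(k',n'),(k,n)}\bigr)\;\geq\;\frac{a(p-1)\,n\,c}{d_k}.
\]
There are two complementary inputs. First, the coefficients of $E(\pi g)$ for $g$ a local principal part admit $(\pi,p)$-adic lower bounds that are strictly sharper than the purely $\pi$-adic ones; this is precisely the $(\pi,p)$-adic refinement of Li, and without it $\alpha_a$ is not even completely continuous (Corollary~\ref{not_cc}). Second, the Dwork operator $\psi_p$ divides indices by $p$, so iterating Frobenius $a$ times produces the factor $a(p-1)/d_k$ governing the pole of order $d_k$. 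These two effects combine to yield the claimed weight.

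Once the column-weight estimate is in hand, the theorem will follow from the standard Fredholm-determinant lemma: if the matrix of a completely continuous operator satisfies $v(M_{ij})\geq w_j$ in every column, then the Newton polygon of $\det(1-Ms)$ lies above the polygon whose slope sequence is $\{w_j\}$ sorted in increasing order. A direct comparison shows that the multiset $\{a(p-1)nc/d_k : 1\leq k\leq \ell,\ n\geq 0\}$ sorted in increasing order realizes exactly the slope sequence of the concatenation $\mathrm{HP}^c$, since each block $\mathrm{HP}_k^c$ contributes the arithmetic progression of slopes $\{a(p-1)nc/d_k\}_{n\geq 0}$. The main obstacle will be the column-weight estimate itself, especially tracking $(\pi,p)$-adic cancellations across the $a$-fold Frobenius composition and across the cross-terms coming from distinct poles $\widehat{P}_k$ in the product $F_\pi$; this step requires using the $p$-adic slack in the Artin--Hasse series in an essential, pole-by-pole manner.
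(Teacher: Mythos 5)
Your overall plan---reduce to a column-weight estimate for the matrix of the completely continuous operator, then invoke the standard Fredholm lemma---is the right shape, and is also what the paper does. But there is a genuine gap in the central step: the claimed column inequality
\[
v_{(\pi^{1/c},p)}\bigl(M_{(k',n'),(k,n)}\bigr)\;\geq\;\frac{a(p-1)\,n\,c}{d_k}
\]
for the matrix $M$ of $\alpha_a$ is not attainable by the method you describe, and the factor $a$ in particular cannot come from ``iterating Frobenius $a$ times.'' The sharp column estimate available (Theorem~\ref{weighted_B_bound}) is for the matrix $D$ of the \emph{single} iterate $\alpha_1$, where $\ordp_{\pi^{1/c},p} D_{(ij),(nk)} \geq \frac{(n-1)(p-1)}{d_k}c$, and this column bound attains the value $0$ on the column $n=1$. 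When you form $\alpha_a=\alpha_1^a$, each entry of $M$ is a sum of products $D_{(i,j_1),(n,k)}D_{(j_1),(j_2)}\cdots$; the contribution $\frac{(n-1)(p-1)}{d_k}c$ from the final column survives, but the other $a-1$ factors range over all intermediate indices, including those with weight $0$. So composing gives you at best the \emph{same} column bound $\frac{(n-1)(p-1)}{d_k}c$, not $a$ times it. A direct attack on the matrix of $\alpha_a$ therefore cannot produce the factor of $a$ as a column estimate.

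The paper circumvents this by never estimating $\alpha_a$ directly. It instead (i) bounds the entries $D_{(ij),(nk)}$ of $\alpha_1$ in the weighted basis $\{W^\pi_{ij}\}$, (ii) applies the Fredholm column lemma to $\det_{\mathcal{O}_a}(1-\alpha_1 s)$ using (\ref{Ck_formula}), and (iii) transports the bound to $\det_{\mathcal{O}_1}(1-\alpha_1 s)$ by choosing a normal basis $\eta,\eta^\tau,\dots,\eta^{\tau^{a-1}}$ of $\Zq/\Zp$, using that $\alpha_1$ is $\tau^{-1}$-semilinear so that the $\mathcal{O}_1$-matrix in this basis inherits the same $(\pi^{1/c},p)$-adic bounds block-by-block. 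The factor $a$ then enters algebraically through Proposition~\ref{alpha_relation}: $\det_{\mathcal{O}_1}(1-\alpha_1 s)=\det_{\mathcal{O}_a}(1-\alpha_a s)^a$, which identifies the $a$-th power of the $C$-function with a quantity whose Newton polygon has been bounded. To repair your argument you would need to either replicate this $\alpha_1$-plus-normal-basis reduction, or else prove $(\pi,p)$-adic estimates directly for the coefficients of $F_{[a]}=\prod_{m=0}^{a-1}(\tau^m F)(x^{p^m})$ and for $U^a$, which is not done in the paper and is considerably harder because $F_{[a]}$ has poles of order $ad_k$ at each $\widehat{P}_k$ and $U^a$ divides indices by $q=p^a$ rather than $p$, producing estimates whose interaction with the weighted basis is far less transparent than in the single-Frobenius case.
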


As an example, consider the case where $\pi_1$ is a root of $\log(E(x))$ with $\ord\pi_1=1/(p-1)$. After specializing at $\pi=\pi_1$, Theorem~\ref{main_theorem} implies, taking $c=\frac{1}{p-1}$, that the corresponding Hodge polygon is nothing but the concatenation of:
	$$\{(n, \frac{an(n-1)}{2d_k})\}_{n\geq 0},$$
over $k=1,\cdots, \ell$, and this is exactly the same Hodge bound obtained in \cite{Zhu}.

Our construction of a $(\pi,p)$-adic theory opens up many avenues of future development. Liu, Liu and Niu in \cite{LiuLiuNiu}, for instance, compute the generic Newton polygon for the classical $T$-adic $C$-function, and there is a natural question as to whether their results can be extended to the affinoid case. Similarly, Ren, Wan, Xiao and Yu in (\cite{RWXY}) considered exponential sums over higher rank Artin-Schreier-Witt towers and Liu and Liu in \cite{LiuLiu} studied twisted $T$-adic exponential sums. Extending both of these results to the affinoid case might be interesting. 

This paper was written under the supervision of my advisor, Hui June Zhu. I thank her for her constant advice and guidance.   

%
%
\section{Preliminaries}

We will need some results about Tate and Banach algebras. For a more comprehensive review, see \cite{BGR}, \cite{bosch2} and \cite{Coleman}. 

\subsection{Tate Algebras}

Let $(A, |\cdot|)$ be an ultrametrically normed ring. Define the Tate algebra over $A$ to be 
\begin{align*}
	\gls{TateAlgebra} = \{\sum_{i_1,\cdots,i_n\in\mathbb{Z}_{\geq 0}} a_{i_1, \cdots, i_n}X_1^{i_1}\cdots X_n^{i_n} \in A[[X_1, \cdots, X_n]] : |a_{i_1, \cdots, i_n}|\to 0\\\textrm{ as } i_1+\cdots + i_n\to\infty\},
\end{align*}
and equip $A\langle X_1, \cdots, X_n\rangle$ with the gauss norm \gls{GaussNorm}:
$$|\sum_{i_1,\cdots,i_n\in\mathbb{Z}_{\geq 0}} a_{i_1, \cdots, i_n}X_1^{i_1}\cdots X_n^{i_n} |_{gauss}=\sup_{i_1,\cdots,i_n} |a_{i_1,\cdots,i_n}|.$$

\subsection{Banach algebras and modules}

Let $A$ be a complete unital commutative ring separated with respect to a non-trivial ultrametric norm $|\cdot |$ such that 
\begin{enumerate}
\item $|1|=1$
\item $|a+b|\leq \max\{|a|,|b|\}$
\item $|ab|\leq |a||b|$
\item $|a|=0$ if and only if $a=0$,
\end{enumerate}
for all $a,b\in A$. 

We call $A$ a Banach algebra. Moreover, if $E$ is an ultrametrically normed complete module over $A$ such that $|ae|\leq |a||e|$ for $a\in A$ and $e\in E$, we say $E$ is a Banach module over $A$. A Banach module $E$ over $A$ has an orthonormal basis $\{e_i\}_{i\in I}\subset E$ if for each $x\in E$ we can write uniquely
$x=\sum_{i\in I} a_i e_i$ 
for $a_i\in A$ with $|a_i|\to 0$ as $i\to\infty$. 

For a bounded Banach module operator $\phi: B\to C$, we write the standard operator norm \gls{OperatorNorm}:
$$\|\phi\|_{op}=\sup_{b\in B, |b|=1}|\phi(b)|.$$
If $\{e_i\}_{i\in I}$ is an  orthonormal bases for $B$, then an endomorphism of $B$, $\phi$, is completely continuous if 
	$$\lim_{i\to \infty} \sup_{j\in I}|b_{ij}|=0,$$
where $\phi(e_i)=\sum_{j\in I}b_{ij}e_j$.

\section{$p$-adic Spaces}

Once and for all, fix $0<r<1$ and $R\in\Cp$ with $|R|_p=r$ and let $s$ be a $p$-power. 
 Define $\gls{Hrs} = \Cp\langle \frac{R}{x-\widehat{P_1^s}}, \cdots, \frac{R}{x-\widehat{P_\ell^s}}\rangle$ to be the Tate algebra of rigid analytic functions over an affinoid $\mathbb{A}_{r,s}=\{x\in\Cp : |x|_p\leq 1/r, |x-\widehat{P_j^s}|_p\geq r\textrm{ for } 2\leq j\leq \ell\}$ with  
 supremum norm \gls{HrsNorm}:
	$$\|\xi\|_{r,s} = \sup_{x\in \mathbb{A}_{r,s}} |\xi(x)|_p.$$
	
\begin{remark}\label{gauss_sup}
Let $A$ be any algebraically closed and ultrametrically normed field and consider the Tate algebra $A\langle X_1, \cdots, X_n\rangle$. It is well known that if $Z = \{(x_1, \cdots, x_n) \in A^n : |x_i|\leq 1\}$, then for $f\in A\langle X_1, \cdots, X_n\rangle$,
	$$\sup_{(x_1, \cdots, x_n)\in Z} |f(x_1, \cdots, x_n)| = |f|_{gauss}.$$
However in the above, when $X_i=\frac{R}{x-\widehat{P_i^s}}$, we see that $(X_1, \cdots, X_n)\in Z$ if and only if, $i\neq 1$, $|\frac{R}{x-\widehat{P_i^s}}|_p\leq 1$, which implies $|x-\widehat{P_i^s}|_p\geq r$, and for $i=1$, $|Rx|_p\leq 1$, which yields $|x|_p\leq 1/r$. Hence $Z=\mathbb{A}_{r,s}$ and $|\cdot|_{gauss}=\|\cdot\|_{r,s}$ on $\mathcal{H}_{r,s}$. 
\end{remark}

$\mathcal{H}_{r,s}$ has two important orthonormal bases that we will utilize. 

\begin{prop}\label{padic_ortho}
The set 
	$$\left\{\left (\frac{R}{x-\hat{P_j^s}}\right)^i\right\}_{\substack{1\leq j\leq \ell \\ 0\leq i}}$$
forms an orthonormal basis for $\mathcal{H}_{r,s}$ over $\Cp$. (When convenient, we will use the notation $\gls{Bij}=\frac{1}{(x-\widehat{P_j})^i}$.)
\end{prop}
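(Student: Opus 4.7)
The plan is to realize $\mathcal{H}_{r,s}$ as an isometric direct sum of one-variable Tate algebras, one attached to each pole $\widehat{P_j^s}$, and to invoke the standard monomial orthonormal basis within each summand. Concretely, $\mathbb{A}_{r,s}$ is the closed disk $D_\infty := \{x : |x|_p \leq 1/r\}$ with the pairwise disjoint open disks $\{|x-\widehat{P_j^s}|_p < r\}$ ($j = 2,\dots,\ell$) removed; disjointness uses $|\widehat{P_j^s} - \widehat{P_k^s}|_p = 1 > r$ for $j, k \geq 2$, which holds since distinct Teichm\"uller lifts lie in distinct residue classes mod $p$. Every $f \in \mathcal{H}_{r,s}$ then admits a unique Mittag--Leffler decomposition
\[
    f = g_1(x) + \sum_{j=2}^{\ell} g_j\!\left(\frac{1}{x-\widehat{P_j^s}}\right),
\]
where $g_1 \in \Cp\langle Rx\rangle$ and, for $j \geq 2$, $g_j(y) \in \Cp\langle Ry\rangle$ with $g_j(0) = 0$.

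Next, I would show this decomposition is isometric for $\|\cdot\|_{r,s}$. One direction, $\|f\|_{r,s} \leq \max_j \|g_j\|_{r,s}$, follows from the ultrametric triangle inequality. For the reverse, observe that for $i \geq 1$ the coefficient of $y^i$ in $g_j$ equals the Laurent coefficient $a_{i,j}$ of $f$ about $\widehat{P_j^s}$; since the circle $|x-\widehat{P_j^s}|_p = r$ lies inside $\mathbb{A}_{r,s}$, the $p$-adic maximum principle yields $|a_{i,j}|_p\, r^{-i} \leq \|f\|_{r,s}$, hence $\|g_j\|_{r,s} \leq \|f\|_{r,s}$. The analogous argument at $\infty$ handles $g_1$.

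Expanding each summand in its one-variable Tate algebra gives $g_1 = \sum_{i \geq 0} c_{i,1}(Rx)^i$ and $g_j = \sum_{i \geq 1} c_{i,j}(R/(x-\widehat{P_j^s}))^i$ for $j \geq 2$, with $|c_{i,j}|_p \to 0$ and $\|g_j\|_{r,s} = \sup_i |c_{i,j}|_p$ by the standard orthonormality of monomials in a one-variable Tate algebra. Assembling these pieces via the isometric decomposition yields the claimed expansion with $\|f\|_{r,s} = \sup_{i,j} |c_{i,j}|_p$, proving the proposition---with the understanding that one retains only a single constant basis vector, since all of $(R/(x-\widehat{P_j^s}))^0$ are identically $1$.

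I expect the main obstacle to be the $\geq$ direction of the isometry, which rests on the $p$-adic maximum principle and the algebraic closedness of $\Cp$ (needed so that the circles $|x-\widehat{P_j^s}|_p = r$ are populated inside $\mathbb{A}_{r,s}$). An alternative route is to leverage Remark~\ref{gauss_sup}: since the Gauss and supremum norms agree on $\mathcal{H}_{r,s}$, the partial-fraction identity
\[
    \frac{1}{(x-\widehat{P_j^s})(x-\widehat{P_k^s})} = \frac{1}{\widehat{P_j^s}-\widehat{P_k^s}}\bigl((x-\widehat{P_j^s})^{-1} - (x-\widehat{P_k^s})^{-1}\bigr)
\]
reduces multi-monomials in the $R/(x-\widehat{P_j^s})$'s (for $j, k \geq 2$) to norm-preserving $\Cp$-linear combinations of the proposed basis, with the cross terms involving the $Rx$ variable handled separately via the expansion $Rx \cdot R/(x-\widehat{P_k^s}) = R^2 + R\widehat{P_k^s}\cdot R/(x-\widehat{P_k^s})$.
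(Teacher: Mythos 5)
Your proposal is correct and follows the same route as the paper's source: the paper simply cites Zhu's Lemma 2.1, whose argument is precisely this Mittag--Leffler decomposition into one-variable Tate algebras together with the maximum-principle isometry on the circles $|x-\widehat{P_j^s}|_p=r$. Your observation that the $\ell$ copies of the constant function $(R/(x-\widehat{P_j^s}))^0=1$ must be identified is a legitimate (and correctly handled) indexing point that the statement as written glosses over.
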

\begin{proof}
See Lemma 2.1 and the comment following its proof on p.1535 in \cite{Zhu}.
\end{proof}

\begin{prop}\label{padic_reich}
Let $v(x) = (x-\widehat{P}_1)\cdots  (x-\widehat{P}_\ell)$. The set
	$$\left\{\frac{x^i}{R^{i-j\ell}v^j}\right\}_{\substack{i\geq 0, (i, \ell)=1,\\ j\geq 0}}$$
forms an orthonormal basis for $\mathcal{H}_{r,s}$ over $\Cp$. 
\end{prop}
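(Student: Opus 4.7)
The plan is to prove the claim via an isometric change of basis with Proposition~\ref{padic_ortho}. Write $e_{i,j}:=x^i/(R^{i-j\ell}v^j)$ for the elements of the proposed basis $\mathcal{R}$ and $\{B_{ij}\}$ for the orthonormal basis of that proposition.

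First I would verify that each $e_{i,j}$ has $\|e_{i,j}\|_{r,s}=1$. On the affinoid $\mathbb{A}_{r,s}$ one has $|x|_p\leq 1/r$ and $|x-\widehat{P}_k|_p\geq r$ for the finite poles, so that $|v(x)|_p$ can be estimated from below in the worst case; since $|R|_p=r$, the exponent $i-j\ell$ is precisely what forces $|e_{i,j}|_p\leq 1$ everywhere on $\mathbb{A}_{r,s}$. As $\Cp$ is algebraically closed the supremum is attained on the boundary of the affinoid, and by Remark~\ref{gauss_sup} this supremum equals the gauss norm, which evaluates to $1$ after cancellation.

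The main step is to establish a mutually integral change of basis between $\mathcal{R}$ and $\{B_{ij}\}$. In one direction, I would express each $R^m/(x-\widehat{P}_k)^m$ in the Reich set by clearing denominators to $v^m$ via
\[
\frac{R^m}{(x-\widehat{P}_k)^m} \;=\; \frac{R^m\,\prod_{k'\neq k}(x-\widehat{P}_{k'})^m}{v^m},
\]
expanding the integral polynomial numerator in $x$ (the Teichm\"uller lifts $\widehat{P}_{k'}$ all lie in $\mathcal{O}_{\Cp}$), and then reducing any monomials $x^i/v^m$ whose exponent shares a factor with $\ell$ by the Weierstrass-type identity $x^{\ell}=v(x)+q(x)$ with $\deg q<\ell$ and $q\in\mathcal{O}_{\Cp}[x]$, which trades a factor of $v$ in the denominator for a polynomial of lower degree in the numerator. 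Iterating until all surviving numerator exponents satisfy $(i,\ell)=1$ produces a finite $\mathcal{O}_{\Cp}$-linear expression in $\mathcal{R}$. In the reverse direction, a standard partial fraction decomposition of $x^i/v^j$, together with an expansion of the polynomial quotient part in the basis associated to the pole $P_1=\infty$, expresses each $e_{i,j}$ as an $\mathcal{O}_{\Cp}$-linear combination of the $B_{ij}$.

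Once both change-of-basis matrices are shown to have entries of absolute value at most $1$, the orthonormality of $\{B_{ij}\}$ transfers to $\mathcal{R}$: for any expansion $f=\sum d_{i,j}e_{i,j}$ we recover $\|f\|_{r,s}=\sup_{i,j}|d_{i,j}|_p$, and the convergence condition $|d_{i,j}|_p\to 0$ matches. The main obstacle is the reduction algorithm in the preceding paragraph: one must verify that coprimality $(i,\ell)=1$ is exactly the right termination criterion and that the shifts introduced by the normalizing factor $R^{i-j\ell}$ interact with the powers of $v$ created by successive Weierstrass reductions so as to preserve integrality in \emph{both} directions of the change of basis, so that neither matrix has entries exceeding $1$ in $p$-adic absolute value.
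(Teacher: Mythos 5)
The paper does not actually prove this proposition: its ``proof'' is a one-line citation to Theorem~2 (and the following remark) of Reich's 1969 paper. Your proposal instead attempts a self-contained proof by exhibiting a mutually integral change of basis against the basis of Proposition~\ref{padic_ortho}, using partial fractions in one direction and a Weierstrass-type reduction $x^{\ell}=v(x)+q(x)$ in the other. That is a genuinely different route (and morally the one underlying Reich's argument), but you explicitly flag the central verification as ``the main obstacle'' and then leave it undone, so what you have is a plan rather than a proof.

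Three of the steps you leave open are not mere bookkeeping. First, the Weierstrass reduction you describe terminates when the surviving numerator exponent satisfies $0\le i<\ell$; coprimality $\gcd(i,\ell)=1$ does not emerge from the division algorithm, so either you need a different reduction that produces that index set or you need to reconcile the stated index set with the one your algorithm actually yields. Second, the claim $\|e_{i,j}\|_{r,s}=1$ is not automatic: on the boundary component $|x|_p=1/r$ one has $|v(x)|_p=r^{-\deg v}$, and whether $R^{i-j\ell}$ is the correct normalizing power depends delicately on $\deg v$ and on the convention for the factor associated to $P_1=\infty$; ``evaluates to $1$ after cancellation'' does not settle this. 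Third, integrality in the forward direction is not visible from clearing denominators alone: after expanding $R^{m}\prod_{k'\neq k}(x-\widehat{P}_{k'})^{m}/v^{m}$ in $x$, a monomial $x^{i}R^{m}/v^{m}$ is $R^{\,m+i-m\ell}\,e_{i,m}$, and $|R^{\,m+i-m\ell}|_p\ge 1$ whenever $i\le m(\ell-1)$, so the unreduced expansion is not obviously bounded by $1$; you must track the $R$-powers through each Weierstrass step to see that the final coefficients are integral. Until these points are resolved, the orthonormality transfer does not follow.
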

\begin{proof}
See Theorem 2 and the remark following it in \cite{Reich}.
\end{proof}

%
%
\section{$(\pi,p)$-adic Spaces}

Let $\pi$ be a formal variable.

\begin{definition}\label{pi_p_norm}
For $f(\pi)=\sum_{i=0}^\infty b_i\pi^i\in\Zq[[\pi]]$, define the $(\pi, p)$-norm on $\Zq[[\pi]]$ \gls{pipNorm}:
	$$|\sum_{i=0}^\infty b_i\pi^i|_{\pi, p} = \max_i|b_i|_pp^{-i}.$$
\end{definition}

\begin{lemma} \label{Tadic_norm}
 $|\cdot|_{\pi,p}$ is a complete multiplicative norm on $\Zq[[\pi]]$. 
\end{lemma}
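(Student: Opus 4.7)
The plan is to verify the four norm axioms from Section~2, the multiplicativity, and then completeness. Axioms (1), (2), (4) are essentially immediate: $|1|_{\pi,p} = |1|_p\cdot p^0 = 1$; the ultrametric inequality is inherited coefficient-wise from $|\cdot|_p$, since the coefficient of $\pi^i$ in $f+g$ has $p$-adic norm at most $\max(|b_i|_p,|c_i|_p)$; and non-degeneracy follows since $|f|_{\pi,p}=0$ forces each $|b_i|_p=0$.

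The core of the proof is sharpening axiom (3) to equality (multiplicativity). Given $f=\sum_i b_i\pi^i$ and $g=\sum_j c_j\pi^j$, the direction $|fg|_{\pi,p}\le |f|_{\pi,p}|g|_{\pi,p}$ is a one-line ultrametric estimate on the coefficient $\sum_{i+j=k}b_ic_j$ of $\pi^k$ in $fg$. For the reverse direction I would use the classical Gauss-lemma trick: let $i_0$ and $j_0$ be the \emph{least} indices achieving the maxima in $|f|_{\pi,p}$ and $|g|_{\pi,p}$ respectively. In the coefficient of $\pi^{i_0+j_0}$, the term $b_{i_0}c_{j_0}$ strictly dominates all others $p$-adically: if $i<i_0$ then $|b_i|_p p^{-i}< |b_{i_0}|_p p^{-i_0}$ by minimality, while $|c_j|_p p^{-j}\le |c_{j_0}|_p p^{-j_0}$, so multiplying and cancelling $p^{-(i_0+j_0)}$ gives $|b_ic_j|_p<|b_{i_0}c_{j_0}|_p$; the symmetric case $j<j_0$ is analogous. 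By the strict ultrametric triangle inequality the sum has $p$-adic norm exactly $|b_{i_0}c_{j_0}|_p$, yielding $|fg|_{\pi,p}\ge |f|_{\pi,p}|g|_{\pi,p}$.

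For completeness, take a Cauchy sequence $\{f_n=\sum_i b_i^{(n)}\pi^i\}$. For each fixed $i$, the bound $|b_i^{(n)}-b_i^{(m)}|_p\le p^i|f_n-f_m|_{\pi,p}$ shows $\{b_i^{(n)}\}_n$ is Cauchy in $\Zq$, so converges to some $b_i\in\Zq$ by completeness of $\Zq$. Define $f=\sum_i b_i\pi^i$; since $|b_i|_p\le 1$, the quantity $|b_i|_p p^{-i}\le p^{-i}\to 0$ so the supremum defining $|f|_{\pi,p}$ is attained and finite, hence $f\in\Zq[[\pi]]$. A routine passage to the limit $m\to\infty$ inside the uniform estimate $|b_i^{(n)}-b_i^{(m)}|_p p^{-i}<\epsilon$ (valid for $n,m\ge N$, all $i$) shows $|f_n-f|_{\pi,p}\le\epsilon$ for $n\ge N$, so $f_n\to f$.

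The main obstacle is the multiplicativity step: the submultiplicative bound is trivial, but upgrading to equality requires the careful choice of minimal indices $i_0,j_0$ and the verification that the presumed dominant term in the convolution is indeed strictly larger $p$-adically than every other term of the same $\pi$-degree. Everything else is bookkeeping.
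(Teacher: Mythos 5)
Your proof is correct, and the key step --- multiplicativity via the minimal indices $i_0,j_0$ and the strict $p$-adic domination of $b_{i_0}c_{j_0}$ among the terms of the convolution of $\pi$-degree $i_0+j_0$ --- is exactly the paper's argument. The only difference is in the completeness step, and it is cosmetic: you run a direct coefficient-wise Cauchy argument, whereas the paper identifies $|\cdot|_{\pi,p}$ with the norm induced by the $(\pi,p)$-adic topology and invokes $\Zq[[\pi]]\cong\varprojlim_i \Zq[[\pi]]/(\pi,p)^i$; both are standard, and your version is somewhat more self-contained.
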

\begin{proof}
Let $f(x)=\sum_{i=0}^\infty b_i\pi^i, g(x)=\sum_{i=0}^\infty c_i\pi^i\in\Zq[[\pi]]$. The only nontrivial thing to prove $|\cdot|_{\pi, p}$ is a norm is $|f+g|_{\pi, p}\leq \max(|f|_{\pi,p}, |g|_{\pi,p}).$ Then: 
\begin{align*}
	|f+g|_{\pi, p}&=\max_i(|b_i+c_i|_pp^{-i})\leq \max_i(\max(|b_i|_p, |c_i|_p)p^{-i}) \\
		&= \max(\max_i |b_i|_pp^{-i},\max_i |c_i|_pp^{-i}) = \max(|f|_{\pi,p}, |g|_{\pi,p}).
\end{align*}
To see that $\Zq[[\pi]]$ is complete with respect to this norm, observe that $|\cdot|_{\pi,p}$ is just the norm induced by the $(\pi, p)$-topology on $\Zq[[\pi]]$, and 
	$$\varprojlim_i \Zq[[\pi]]/(\pi,p)^i\cong \Zq[[\pi]].$$
One direction of the inequality to show $|\cdot|_{\pi,p}$ is multiplicative is clear:
\begin{align*}
	|fg|_{\pi,p}&=\max_i|\sum_{\substack{j+k=i\\j,k\geq 0}} b_jc_k|_pp^{-i}
		\leq\max_i\max_{j,k}\left (|b_j|_pp^{-j}\cdot |c_k|_pp^{-k}\right)\leq |f|_{\pi,p}|g|_{\pi,p}.
\end{align*}
For the opposite inequality, let $i_0$ and $j_0$ be the minimal integers such that $|f|_{\pi,p}=|b_{i_0}|_pp^{-i_0}$ and $|g|_{\pi,p}=|c_{j_0}|_pp^{-j_0}$. If we write $fg=\sum_{i=0}^\infty a_i\pi^i$, then 
	$$|a_{i_0+j_0}|_p=|b_{i_0}c_{j_0}+\sum_{\substack{i+j=i_0+j_0\\i,j\geq 0, i\neq i_0, j\neq j_0 }} b_ic_j|_p.$$ 
Take some $i,j$, $i\neq i_0$ and $j\neq j_0$, with $i+j=i_0+j_0$ so that either $i<i_0$ and $j>j_0$ or $j<j_0$ and $i>i_0$. In either case, by the minimality of $i_0$ and $j_0$, $|b_i|_p|c_j|_p<|b_{i_0}|_p|c_{j_0}|_p$, and so $|a_{i_0+j_0}|_p=|b_{i_0}c_{j_0}|_p$. Hence:
	$$|fg|_{\pi,p}=\max_i |a_i|_pp^{-i}\geq |a_{i_0+j_0}|_pp^{-(i_0+j_0)}=|f|_{\pi,p}|g|_{\pi,p}.$$
\end{proof} 

Because both $\Zq[[\pi]]$ and $\mathcal{H}_{r,s}$ are Banach modules over $\Zq$, we can consider the following completed tensor product of $\Zq$-Banach modules (again see \cite{Coleman}, p.424):
\begin{definition}
Define a module 
	$$\gls{Hrspi} = \Zq[[\pi]]{\hat{\otimes}}_{\Zq} \mathcal{H}_{r,s}$$
equipped with the norm coming from the completed tensor product \gls{HrsNorm}:
$$\| n\|_{r,s}=\inf\sup_i |b_i(\pi)|_{\pi,p}\|\xi_i\|_{r,s},$$
where the infimum is taken over all representations of $n=\sum_ib_i(\pi)\otimes\xi_i$, with $|b_i(\pi)|_{\pi, p}\|\xi\|_{r,s}\to 0$ as $i\to\infty$.

Note that  for the sake of notation when referring to simple tensors in $\mathcal{H}_{r,s}^\pi$ we will just write $a\otimes b$ rather than $a\hat{\otimes}b$.
\end{definition}

\begin{prop} 
 For $g,h\in\mathcal{H}_{r,s}^\pi$, $\|gh\|_{r,s}\leq \|g\|_{r,s}\|h\|_{r,s}$.
\end{prop}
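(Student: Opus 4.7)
The plan is to exploit the fact that both factor norms, $|\cdot|_{\pi,p}$ on $\mathbb{Z}_q[[\pi]]$ and $\|\cdot\|_{r,s}$ on $\mathcal{H}_{r,s}$, are already (sub)multiplicative: the $(\pi,p)$-norm is multiplicative by Lemma~\ref{Tadic_norm}, and the sup/Gauss norm on the Tate algebra $\mathcal{H}_{r,s}$ is multiplicative by the identification in Remark~\ref{gauss_sup} together with the standard multiplicativity of the Gauss norm. Submultiplicativity of the tensor product norm is then a formal consequence of these facts, and the argument reduces entirely to manipulating representations as admissible convergent sums.

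Concretely, I would first fix $\epsilon > 0$ and choose representations
$$g = \sum_{i} b_i(\pi) \otimes \xi_i, \qquad h = \sum_{j} c_j(\pi) \otimes \eta_j$$
with $|b_i(\pi)|_{\pi,p}\|\xi_i\|_{r,s} \to 0$, $|c_j(\pi)|_{\pi,p}\|\eta_j\|_{r,s} \to 0$, and such that
$$\sup_i |b_i(\pi)|_{\pi,p}\|\xi_i\|_{r,s} \leq \|g\|_{r,s} + \epsilon, \qquad \sup_j |c_j(\pi)|_{\pi,p}\|\eta_j\|_{r,s} \leq \|h\|_{r,s} + \epsilon.$$
Then I would form the product representation
$$gh = \sum_{i,j} b_i(\pi) c_j(\pi) \otimes \xi_i \eta_j.$$

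Next, I need to verify that this double sum is an admissible representation in the sense of the definition of the completed tensor product norm. Using multiplicativity of both factor norms,
$$|b_i c_j|_{\pi,p}\|\xi_i \eta_j\|_{r,s} \leq \bigl(|b_i|_{\pi,p}\|\xi_i\|_{r,s}\bigr)\bigl(|c_j|_{\pi,p}\|\eta_j\|_{r,s}\bigr),$$
so the product of two null sequences is null, and the double sum is a valid representation of $gh$. Taking the sup over $i,j$ and using the above inequality gives
$$\|gh\|_{r,s} \leq \sup_{i,j} |b_i c_j|_{\pi,p}\|\xi_i\eta_j\|_{r,s} \leq \bigl(\|g\|_{r,s}+\epsilon\bigr)\bigl(\|h\|_{r,s}+\epsilon\bigr).$$
Letting $\epsilon \to 0$ yields the desired submultiplicativity.

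I do not anticipate a major obstacle, since this is a general fact about completed tensor products of Banach algebras with submultiplicative norms; the only thing that requires verification is that each of the two factor norms is multiplicative (already handled) and that reindexing the double sum into an admissible single sum respects the norm bound, which is immediate from the definition of the infimum over representations.
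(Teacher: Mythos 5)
Your proof is correct and follows essentially the same route as the paper: take representations of $g$ and $h$, multiply them to get the representation $gh=\sum_{i,j}b_ic_j\otimes \xi_i\eta_j$, apply multiplicativity of $|\cdot|_{\pi,p}$ (Lemma~\ref{Tadic_norm}) and of the norm on $\mathcal{H}_{r,s}$, and pass to the infimum over representations. Your $\epsilon$-bookkeeping and explicit check that the double sum is an admissible representation are slightly more careful than the paper's direct ``inf over pairs of representations'' phrasing, but the argument is the same.
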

\begin{proof}
For $g,h\in\mathcal{H}_{r,s}^\pi$ with arbitrary representations $g=\sum_i b_i\otimes g_i$ and $h=\sum_i c_i\otimes h_i$, 
$$gh = (\sum_i b_i\otimes g_i)(\sum_j c_j\otimes h_j)=\sum_{i,j}b_ic_j\otimes g_ih_j.$$
Hence by Lemma~\ref{Tadic_norm} and the fact that the norm on the Tate algebra is multiplicative,
\begin{align*}
	\|gh\|_{r,s}&=\inf_{gh=\sum_ie_i\otimes\xi_i}\sup_{i}|e_i|_{\pi,p}\|\xi_i\|_{r,s}\leq \inf_{\substack{g =\sum_ib_i\otimes g_i\\h=\sum_jc_j\otimes h_j}}\sup_{i,j}|b_ic_j|_{\pi,p}\|g_ih_j\|_{r,s}\\
	&\leq \inf_{\substack{g =\sum_ib_i\otimes g_i\\h=\sum_jc_j\otimes h_j}}\sup_{i,j}(|b_i|_{\pi,p}\|h_j\|_{r,s})(|c_j|_{\pi,p}\|g_i\|_{r,s}) = \|g\|_{r,s}\|h\|_{r,s}.
\end{align*}
\end{proof}

Let $\gls{C}= \Zq[[\pi]]\hat{\otimes}_{\Zq} \Cp$ and define a $\mathcal{C}$-module structure on $\mathcal{H}_{r,s}^\pi$ in the following way: for a tensor $b{\otimes}\xi\in\mathcal{H}_{r,s}^\pi$ and a tensor $b'{\otimes} \xi'$ in $\mathcal{C}$, 
	$$(b{\otimes} \xi)(b'{\otimes}\xi')=bb'{\otimes}\xi\xi',$$
and extend linearly. The $\Zq$-Banach module $\mathcal{C}$ also has an induced tensor product norm defined similarly to the above. Abusing notation, we will write it as $\|\cdot \|_{\pi,p}$.

\begin{prop}\label{structure_prop}
$\mathcal{H}_{r,s}^\pi$ is a $ \mathcal{C}$-Banach module and if $\{e_i\}_{i\in I}$ is an orthonormal basis for $\mathcal{H}_{r,s}$ over $\Cp$ then $\{1\otimes e_i\}_{i\in I}$ is an orthonormal basis for $\mathcal{H}_{r,s}^\pi$ over $\mathcal{C}$.
\end{prop}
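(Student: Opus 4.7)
The plan is to verify the $\mathcal{C}$-Banach module structure and then establish existence and uniqueness of the expansion in the proposed basis.

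\textbf{Module structure and norm.} The $\mathcal{C}$-action defined on simple tensors extends by $\Zq$-bilinearity and continuity to all of $\mathcal{C} \times \mathcal{H}_{r,s}^\pi$; associativity and unitality are inherited from the $\Zq$-algebra structures on $\Zq[[\pi]]$ and $\Cp$. For the submultiplicativity $\|cx\|_{r,s} \leq \|c\|_{\pi,p}\|x\|_{r,s}$, I would repeat the argument of the preceding proposition almost verbatim: pick representations of $c$ and $x$ approaching the infima of their tensor-product norms, then invoke multiplicativity of $|\cdot|_{\pi,p}$ on $\Zq[[\pi]]$ (Lemma~\ref{Tadic_norm}) together with multiplicativity of the Gauss/supremum norm on $\mathcal{H}_{r,s}$ (Remark~\ref{gauss_sup}). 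Completeness is automatic from the completed-tensor-product construction.

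\textbf{Existence of the expansion.} Given $x \in \mathcal{H}_{r,s}^\pi$, pick a representation $x = \sum_k b_k \otimes \xi_k$ with $|b_k|_{\pi,p}\|\xi_k\|_{r,s} \to 0$. Using Proposition~\ref{padic_ortho} (or whichever orthonormal basis $\{e_i\}$ is at hand), expand $\xi_k = \sum_i a_{ki} e_i$ with $a_{ki} \in \Cp$ and $|a_{ki}|_p \leq \|\xi_k\|_{r,s}$, and define $c_i = \sum_k b_k \otimes a_{ki} \in \mathcal{C}$. The series defining each $c_i$ converges in $\mathcal{C}$ because $\|b_k \otimes a_{ki}\|_{\pi,p} \leq |b_k|_{\pi,p}\|\xi_k\|_{r,s} \to 0$. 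To get $\|c_i\|_{\pi,p} \to 0$, I would use a tail-and-head split: given $\epsilon > 0$, pick $K$ with $|b_k|_{\pi,p}\|\xi_k\|_{r,s} < \epsilon$ for all $k \geq K$, and for each of the finitely many $k < K$ use $|a_{ki}|_p \to 0$ as $i \to \infty$ to dominate the remaining contributions. Finally, I would verify $x = \sum_i c_i(1 \otimes e_i)$ by approximation: let $x_n = \sum_{k \leq n} b_k \otimes \xi_k$, which admits the legitimate expansion $x_n = \sum_i c_i^{(n)}(1 \otimes e_i)$ with $c_i^{(n)} = \sum_{k \leq n} b_k \otimes a_{ki}$ a finite sum; then $x_n \to x$ in $\mathcal{H}_{r,s}^\pi$, and $c_i^{(n)} \to c_i$ uniformly in $i$ since the error is controlled by $\sup_{k > n}|b_k|_{\pi,p}\|\xi_k\|_{r,s}$.

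\textbf{Uniqueness.} For each index $i_0$ the coordinate functional $p_{i_0} : \mathcal{H}_{r,s} \to \Cp$, $\sum_i a_i e_i \mapsto a_{i_0}$, is $\Cp$-linear of operator norm at most one. Tensoring with the identity on $\Zq[[\pi]]$ over $\Zq$ produces a bounded $\mathcal{C}$-linear map $\mathrm{id} \hat{\otimes} p_{i_0} : \mathcal{H}_{r,s}^\pi \to \mathcal{C}$ sending $c(1 \otimes e_j)$ to $c$ if $j = i_0$ and to $0$ otherwise; applying it to a purported relation $\sum_i c_i(1 \otimes e_i) = 0$ forces $c_{i_0} = 0$ for every $i_0$. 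The main technical obstacle is the double-summation rearrangement in the existence step: one must combine the per-$k$ decay of $|a_{ki}|_p$ with the uniform decay of $|b_k|_{\pi,p}\|\xi_k\|_{r,s}$ to simultaneously control the convergence of each $c_i$ and the vanishing of $\|c_i\|_{\pi,p}$ as $i \to \infty$. Once that is in hand, both existence and uniqueness fall out cleanly.
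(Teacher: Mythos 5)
Your proof is correct, but it takes a genuinely different route from the paper. The paper first rewrites $\mathcal{H}_{r,s}^\pi = \Zq[[\pi]]\hat\otimes_{\Zq}\mathcal{H}_{r,s}$ as $\mathcal{C}\hat\otimes_{\Cp}\mathcal{H}_{r,s}$ via associativity and base-change identities for completed tensor products (citing Bosch, App.~B, Prop.~3), and then invokes Coleman's Proposition~A1.3, which says precisely that an orthonormal basis of a $\Cp$-Banach space yields, after $\hat\otimes_{\Cp}\,\mathcal{C}$, an orthonormal basis over $\mathcal{C}$. That proof is a two-line reduction to general machinery and makes no explicit construction at all. Your proof instead verifies the unique-expansion property from scratch: you extract the coefficients $c_i = \sum_k b_k\otimes a_{ki}$ from an arbitrary representation of $x$, handle the double-series rearrangement via the tail-and-head split, and establish uniqueness through the tensored coordinate functional $\mathrm{id}\hat\otimes p_{i_0}$. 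This is more work but is self-contained and makes the mechanics explicit, which is genuinely useful: the paper's citation chain (Bosch for the iso, Coleman for the basis transfer) is correct but leaves all of this implicit. The trade-off is purely one of length versus transparency; both arguments prove exactly what the paper's definition of orthonormal basis requires (unique convergent expansion with coefficients tending to zero).

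Two small remarks. First, the paper's definition of orthonormal basis as stated does not demand the norm identity $\|\sum_i c_i(1\otimes e_i)\| = \sup_i\|c_i\|_{\pi,p}$, only the unique-expansion property, so you are not on the hook for it; but it is worth noting that your uniqueness argument already gives one direction for free ($\sup_i\|c_i\|_{\pi,p}\le\|x\|$, since each $\mathrm{id}\hat\otimes p_{i_0}$ has operator norm at most one), and the existence step gives the other ($\|x\|\le\sup_i\|c_i\|_{\pi,p}\,\|1\otimes e_i\|$), so the full isometric statement is implicit in what you wrote. Second, be slightly careful with the phrase ``multiplicativity of the Gauss/supremum norm on $\mathcal{H}_{r,s}$'': in the module-norm estimate you only need $\|a\xi\|_{r,s}=|a|_p\|\xi\|_{r,s}$ for scalars $a\in\Cp$, which is immediate, whereas full multiplicativity of the Gauss norm on the Tate algebra is what the earlier proposition used for the ring-multiplication bound. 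The distinction does not affect correctness here, but the appeal is stronger than what you actually need.
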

\begin{proof}
The first statement is clear; see Section 3.1.1 in \cite{BGR} to prove that this multiplication is well-defined.

For the second statement, by Proposition 3 in Appendix B of \cite{bosch2} and a basic identity about completed tensor products, there is an isomorphism of $\Zq$-Banach modules:
\begin{align*}
\mathcal{C}\hat{\otimes}_{\Cp} \mathcal{H}_{r,s}&\cong( \Zq[[\pi]]\hat{\otimes}_{\Zq} \Cp)\hat{\otimes}_{\Cp}\mathcal{H}_{r,s}
	\cong\Zq[[\pi]]\hat{\otimes}_{\Zq} (\Cp\hat{\otimes}_{\Cp}\mathcal{H}_{r,s})\\
	&\cong\Zq[[\pi]]\hat{\otimes}_{\Zq}\mathcal{H}_{r,s}.
\end{align*}
So by Proposition A1.3 in \cite{Coleman}, $\{(1\otimes 1)\otimes e_i\}_{i\in I}$ is an orthonormal basis for $\mathcal{C}\hat{\otimes}_{\Cp} \mathcal{H}_{r,s}$ over $\mathcal{C}$, which implies that $\{1\otimes e_i\}_{i\in I}$ is an orthonormal basis for $\Zq[[\pi]]\hat{\otimes}_{\Zp}\mathcal{H}_{r,s}$ over $\mathcal{C}$. 
\end{proof}

Let  $\gls{Hrsj} = \Cp\langle \frac{R}{x-\widehat{P_j^s}}\rangle$ and define $\gls{Hrsjpi}=\Zq[[\pi]]{\hat{\otimes}}_{\Zq}(\mathcal{H}_{r,s})_j$. For each $j$, let $\gls{HrsjNorm}$ be the norm coming form the tensor product in $(\mathcal{H}_{r,s})_j$. 

\begin{prop}[Mittag-Leffler]\label{Mittag-Leffler}
There is a decomposition of $\Zq[[\pi]]$-Banach modules
	$$\mathcal{H}_{r,s}^\pi\cong \bigoplus_{j=1}^\ell (\mathcal{H}_{r,s}^\pi)_j.$$
Moreover, if for $\xi\in \mathcal{H}_{r,s}^\pi$ we write $\xi = \sum_{j=1}^\ell \xi_j\in \bigoplus_{j=1}^\ell (\mathcal{H}_{r,s}^\pi)_j$, then $\|\xi\|_{r,s}=\max_{1\leq j\leq \ell}\|\xi_j\|_j$.
\end{prop}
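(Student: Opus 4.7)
The plan is to reduce the statement to the analogous decomposition of $\mathcal{H}_{r,s}$ at the $p$-adic level, and then lift to $\mathcal{H}_{r,s}^\pi$ by applying the completed tensor product with $\Zq[[\pi]]$.

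First, I would establish the $p$-adic version: $\mathcal{H}_{r,s} \cong \bigoplus_{j=1}^\ell (\mathcal{H}_{r,s})_j$ as $\Cp$-Banach modules. Each one-variable Tate algebra $(\mathcal{H}_{r,s})_j=\Cp\langle R/(x-\widehat{P_j^s})\rangle$ carries its own orthonormal basis $\{(R/(x-\widehat{P_j^s}))^i\}_{i\geq 0}$ with respect to $\|\cdot\|_j$. Proposition~\ref{padic_ortho} assembles these partial bases (with the common constant counted once) into a single orthonormal basis of $\mathcal{H}_{r,s}$. This immediately yields the direct-sum decomposition, and since the $\|\cdot\|_{r,s}$-norm of an element equals the supremum of the absolute values of its coefficients in any orthonormal expansion, the supremum splits across the $\ell$ blocks of basis vectors. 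Hence if $\xi=\sum_j\xi_j$ with $\xi_j\in(\mathcal{H}_{r,s})_j$, then $\|\xi\|_{r,s}=\max_j\|\xi_j\|_j$.

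Next, I would apply $\Zq[[\pi]]\hat\otimes_{\Zq}(-)$ to both sides. Because $\ell$ is finite and the completed tensor product distributes over finite direct sums of Banach modules, we get
\[
\mathcal{H}_{r,s}^\pi = \Zq[[\pi]]\hat\otimes_{\Zq}\mathcal{H}_{r,s} \;\cong\; \bigoplus_{j=1}^\ell \Zq[[\pi]]\hat\otimes_{\Zq}(\mathcal{H}_{r,s})_j \;=\; \bigoplus_{j=1}^\ell (\mathcal{H}_{r,s}^\pi)_j.
\]
Equivalently, Proposition~\ref{structure_prop} gives that $\{1\otimes e_i\}$ is an orthonormal $\mathcal{C}$-basis of $\mathcal{H}_{r,s}^\pi$, where $\{e_i\}$ is the basis from Proposition~\ref{padic_ortho}; partitioning the index set according to the $j$-block then isolates an orthonormal $\mathcal{C}$-basis of each $(\mathcal{H}_{r,s}^\pi)_j$, which is the same decomposition.

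For the norm identity $\|\xi\|_{r,s}=\max_j\|\xi_j\|_j$, write $\xi=\sum_i c_i(1\otimes e_i)$ with $c_i\in\mathcal{C}$. By Proposition~\ref{structure_prop}, $\|\xi\|_{r,s}=\sup_i|c_i|_{\pi,p}$, and the summand $\xi_j=\sum_{i\in I_j}c_i(1\otimes e_i)$ satisfies $\|\xi_j\|_j=\sup_{i\in I_j}|c_i|_{\pi,p}$. Since $\sup_i=\max_j\sup_{i\in I_j}$ over the finite partition $I=\sqcup_j I_j$, the identity follows.

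The only subtle point — and the hard part if there is one — is that $\hat\otimes$ really does distribute over the $\ell$-fold direct sum in a norm-preserving way; this is handled cleanly through the orthonormal-basis viewpoint of Proposition~\ref{structure_prop}, which reduces the claim to the elementary observation that taking a sup over a finite disjoint union equals the maximum of the block suprema. The finiteness of $\ell$ is essential here (an infinite direct sum would need to be replaced by its completion), but this is exactly the setting we are in.
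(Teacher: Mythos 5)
Your proposal is correct and follows essentially the same route as the paper: the paper likewise starts from the $p$-adic decomposition $\mathcal{H}_{r,s}\cong\bigoplus_j(\mathcal{H}_{r,s})_j$, distributes $\Zq[[\pi]]\hat{\otimes}_{\Zq}(-)$ over the finite direct sum (citing BGR 2.1.7, Prop.\ 6), and deduces the norm identity from the orthonormal basis statement of Proposition~\ref{structure_prop}. Your spelled-out argument for the norm equality via partitioning the orthonormal index set is exactly the content the paper leaves implicit.
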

\begin{proof}
By Proposition 6 in section 2.1.7 of \cite{BGR}, 
\begin{align*}
	\mathcal{H}_{r,s}^\pi&\cong \Zq[[\pi]]{\hat{\otimes}}_{\Zq} \mathcal{H}_{r,s}\cong \Zq[[\pi]]{\hat{\otimes}}_{\Zq}\bigoplus_{j=1}^\ell (\mathcal{H}_{r,s})_j=\bigoplus_{j=1}^\ell\left ( \Zq[[\pi]]{\hat{\otimes}}_{\Zq}(\mathcal{H}_{r,s})_j \right ).
\end{align*}

The norm relationship follows from Proposition~\ref{structure_prop}. 
\end{proof}

%
%
\subsection{The Submodule $\mathcal{Z}^\pi$}

For the purposes of our Dwork theory, it will suffice to work in an integral submodule $\mathcal{Z}^\pi$ of $\mathcal{H}^\pi_{1,1}$. 

\begin{definition} Consider the $\Zp$ and $\Zq$-Banach modules: 
\begin{align*}
	\gls{O1} = \Zp[[\pi]]\hat{\otimes}_{\Zp}\Zp\textrm{ and }\gls{Oa} = \Zq[[\pi]]\hat{\otimes}_{\Zq}\Zq,
\end{align*} 
and define \gls{Z} to be the the submodule of $\mathcal{H}_{1,1}^\pi$ generated by tensors of the form $1\otimes B_{ij}$ with coefficients in $\mathcal{O}_a$.
\end{definition}

By Proposition~\ref{structure_prop}, every $\xi\in\mathcal{Z}^\pi\subset\mathcal{H}_{1,1}^\pi$ can be uniquely represented as a sum:
\begin{align}
\xi = \sum_{\substack{1\leq j\leq\ell\\ i\geq 0}}c_{ij}(1\otimes B_{ij}),
\end{align}\label{regular_ortho}
	with $c_{ij}\in\mathcal{O}_a$.
Or, via Proposition~\ref{padic_reich} and Proposition~\ref{structure_prop}, each $\xi\in\mathcal{Z}^\pi$ can be uniquely represented as
\begin{align} \label{reich_basis}
\xi = \sum_{\substack{1\leq j\leq\ell\\ i\geq 0}}e_{ij}(1\otimes \frac{x^i}{v^j}),
\end{align}
again with $e_{ij}\in\mathcal{O}_a$.

If $\Gal(\Qq/\Qp)=\langle \gls{tau}\rangle$, $\mathcal{O}_a$ can be endowed with a natural $\tau$ action,
	$$\tau(b(\pi)\otimes r)\mapsto \tau(b(\pi))\otimes\tau(r),$$
with the action of $\tau$ on $\Zq[[\pi]]$ defined coefficient-wise acting as the identity on $\pi$.  Furthermore, letting $\tau$ act as the identity on $x$, we get a $\tau$ action on $\mathcal{Z}^\pi$:
$$\sum_{\substack{1\leq j\leq\ell\\ i\geq 0}}c_{ij}(1\otimes \frac{1}{(x-\widehat{P}_j)^j})\mapsto \sum_{\substack{1\leq j\leq\ell\\ i\geq 0}}\tau(c_{ij})(1\otimes  \frac{1}{(x-\tau(\widehat{P}_j))^j}).$$

 (Note that this $\tau$ action is essentially the same action as $\tau_*$ from \cite{Zhu}.)

We also will need to define two handy maps associated to $\mathcal{Z}^\pi$. 
\begin{lemma}
There is an $\Zq$-Banach algebra isomorphism:
\begin{align*}
	\gls{iota}:\mathcal{O}_a&\to \Zq[[\pi]]\\
	b(\pi)\otimes r &\mapsto rb(\pi),
\end{align*}
and, for $x_0\in  \mathbb{A}_{1,1}$,  there is an evaluation map:
\begin{align*}
	\gls{rho}: \mathcal{Z}^\pi&\to \mathcal{C}\\
	\sum_{ij}c_{ij}(1 \otimes \left(\frac{1}{x-\widehat{P}_j}\right)^i ) &\mapsto 
		\sum_{ij}c_{ij}\left(\frac{1}{x_0-\widehat{P}_j}\right)^i.
\end{align*}
\end{lemma}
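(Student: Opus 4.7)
The plan has two parts, corresponding to the two maps. For $\iota$, I note that the assignment $(b(\pi), r) \mapsto rb(\pi)$ from $\Zq[[\pi]] \times \Zq$ to $\Zq[[\pi]]$ is $\Zq$-balanced and bilinear, so it factors through $\Zq[[\pi]] \otimes_{\Zq} \Zq$. The resulting linear map is the standard isomorphism $M \otimes_R R \cong M$ applied to $M = \Zq[[\pi]]$, and multiplicativity on simple tensors is immediate: $\iota((b_1 \otimes r_1)(b_2 \otimes r_2)) = r_1 r_2 b_1 b_2 = \iota(b_1 \otimes r_1)\iota(b_2 \otimes r_2)$. To show $\iota$ is isometric, I would check on a simple tensor that $|rb(\pi)|_{\pi,p} = |r|_p \cdot |b(\pi)|_{\pi,p}$ by the multiplicativity established in Lemma~\ref{Tadic_norm}, which matches the tensor product norm since the infimum over representations is realized by the canonical one (as $\Zq$ has the trivial one-element orthonormal basis over itself, via Proposition~\ref{structure_prop}). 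The isomorphism then extends uniquely to the completions.

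For $\rho$, my first step is to verify that evaluation at a point $x_0 \in \mathbb{A}_{1,1}$ gives a bounded $\Cp$-linear map $\mathrm{ev}_{x_0} : \mathcal{H}_{1,1} \to \Cp$ with operator norm at most $1$. This is an immediate consequence of Remark~\ref{gauss_sup}: since $x_0$ lies in the affinoid, $|\xi(x_0)|_p \leq \sup_{x \in \mathbb{A}_{1,1}} |\xi(x)|_p = \|\xi\|_{1,1}$, and in particular $|B_{ij}(x_0)|_p \leq 1$ on the orthonormal basis. Then $\rho$ is constructed as the completed tensor product map $\mathrm{id}_{\Zq[[\pi]]} \mathbin{\hat{\otimes}}_{\Zq} \mathrm{ev}_{x_0}$, restricted from $\mathcal{H}_{1,1}^\pi$ to the submodule $\mathcal{Z}^\pi$. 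The explicit formula in the statement is then verified by applying this tensor product map to the simple tensor $c_{ij}(1 \otimes B_{ij})$ and extending by continuity; convergence of the image series in $\mathcal{C}$ is guaranteed because the orthonormal decomposition from Proposition~\ref{structure_prop} forces $|c_{ij}|_{\pi,p} \to 0$, and we have just shown $|B_{ij}(x_0)|_p \leq 1$.

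The main obstacle is not conceptual but involves careful bookkeeping with the $(\pi,p)$-adic tensor product norm. For $\iota$, establishing the isometry on arbitrary (non-simple) tensors requires invoking Proposition~\ref{structure_prop} to exhibit a canonical representation achieving the infimum defining the tensor norm, and checking this persists after completion. For $\rho$, the slight subtlety is ensuring that the bounded map $\mathrm{ev}_{x_0}$ induces a well-defined map on the completed tensor product; this is a routine application of the universal property of $\mathbin{\hat{\otimes}}_{\Zq}$, but one must confirm that the operator norm bound $\|\mathrm{ev}_{x_0}\|_{op} \leq 1$ propagates through the completion so that $\rho$ takes values in $\mathcal{C}$ rather than some larger space.
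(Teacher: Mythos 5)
Your proposal is correct and follows essentially the same route as the paper: the $\iota$ part is the standard $M\hat{\otimes}_R R\cong M$ argument (the paper gets the map from the universal property in BGR 3.1.1 and exhibits the explicit inverse $a(\pi)\mapsto a(\pi)\otimes 1$), and the $\rho$ part reduces, exactly as in the paper, to the observation that $|x_0-\widehat{P}_j|_p\geq 1$ for $x_0\in\mathbb{A}_{1,1}$ so that $|c_{ij}(x_0-\widehat{P}_j)^{-i}|_{\pi,p}\to 0$ and the series converges in $\mathcal{C}$. Your extra verification that $\iota$ is isometric and your packaging of $\rho$ as $\mathrm{id}\,\hat{\otimes}\,\mathrm{ev}_{x_0}$ are harmless refinements of the same argument.
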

\begin{proof}
Defining the obviously bounded $\Zq$-algebra homomorphisms
 \begin{align*}
 \phi_1: \Zq[[\pi]]  \to\Zq[[\pi]]&:
  b(\pi)  \mapsto b(\pi)
  \\
  \phi_2:\Zq \to\Zq[[\pi]]&:
  a \mapsto a,
  \end{align*}
  by Proposition 2 in 3.1.1 of \cite{BGR}, there is a unique bounded $\Zq$-algebra homomorphism  $\psi: \Zq[[\pi]]\hat{\otimes}_{\Zq}\Zq\to\Zq[[\pi]]$. If $a(\pi)\otimes b\in \Zq[[\pi]]\hat{\otimes}_{\Zq}\Zq$,  it's easy to see that $a(\pi)\otimes b = ba(\pi)\otimes 1$, and so by the induced action of $\phi_1$ and $\phi_2$ through $\psi$, $\psi(a(\pi)\otimes b) = ba(\pi)$.  Hence if we define 
 \begin{align*}
 \psi': \Zq[[\pi]] &\to \Zq[[\pi]]\hat{\otimes}_{\Zq}\Zq\\
 	a(\pi) &\mapsto a(\pi)\otimes 1,
 \end{align*}
 one sees that $\psi\circ\psi'$ is the identity and thus $\psi$ is a bijection.

The only thing left is to check is that $\rho_{x_0}$ is well-defined. 
If $x_0\in  \mathbb{A}_{1,1}$, then $|x_0-\hat{P}_j|_p\geq 1$ and so $|\left(\frac{1}{x-\hat{P}_j}\right)^i|_p\leq 1$. Hence $|c_{ij}\left(\frac{1}{x-\hat{P}_j}\right)^i|_{\pi,p}\to 0$ as $i,j\to\infty$ since $|c_{ij}|_{\pi,p}\to 0$ as $i,j\to\infty$, and the claim follows. Observe that if $x_0\in\Zq$, then $\rho_{x_0}: \mathcal{Z}^\pi\to \mathcal{O}_a$ and $\iota\circ \rho_{x_0}: \mathcal{Z}^\pi\to\Zq[[\pi]]$.

\end{proof}

We will also need a twisting of $\mathcal{Z}^\pi$, \gls{Ztau}, which is defined to be the submodule of elements of the form
\begin{align}
\xi = \sum_{ij}c_{ij}(1\otimes \frac{1}{(x-\widehat{P}_j^p)^i}),
\end{align}\label{regular_ortho}
	with $c_{ij}\in\mathcal{O}_a$. We will write $\gls{Bijtau} = \frac{1}{(x-\widehat{P}_j^p)^i}$.

%
%
\section{A Trace Formula}

In this section we develop key trace formulas that will form the foundation for our corresponding Dwork theory. We will work towards proving the following theorem:

\begin{theorem}\label{trace_formula_gen}
Let $k\geq 1$ and $g\in\mathcal{Z}^\pi$ with $U^a\circ g$ completely continuous. Then 
	$$\Tr((U^a\circ g)^k  |\mathcal{Z}^\pi) =  (q^k-1)^{-1}\sum_{\substack{x_0\in\widehat{\mathbb{F}_{q^k}^\times},\\ x_0\neq\widehat{P}_1, \cdots, \widehat{P}_\ell}} \rho_{x_0}\circ (g(x)\cdots g(x^{q^{k-1}})),$$
\end{theorem}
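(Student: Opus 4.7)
The approach follows the classical Dwork-Monsky-Reich trace-formula strategy, adapted to the $(\pi,p)$-adic Banach module $\mathcal{Z}^\pi$. The argument breaks into two main pieces: an iteration identity that rewrites $(U^a \circ g)^k$ as $U^{ak}$ composed with multiplication by a product of Frobenius twists of $g$, followed by a direct fixed-point calculation for $\Tr(U^{ak} \circ M_{G_k})$.

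For the iteration identity, the defining property of the Dwork operator $U^a$ is that it is a left inverse of the $q$-Frobenius pullback $\phi : h(x) \mapsto h(x^q)$, which yields the intertwining relation
\begin{equation*}
U^a \circ M_{h(x^q)} \;=\; M_{h(x)} \circ U^a .
\end{equation*}
Iterating this $k-1$ times on $(U^a \circ g)^k$ peels off successive Frobenius twists of $g$ and absorbs them under a single $U^{ak}$:
\begin{equation*}
(U^a \circ g)^k \;=\; U^{ak} \circ M_{G_k}, \qquad G_k \;=\; g(x)\,g(x^q)\cdots g(x^{q^{k-1}}).
\end{equation*}
Thus it suffices to compute $\Tr(U^{ak} \circ M_{G_k} \mid \mathcal{Z}^\pi)$.

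To evaluate this trace, I would use the Mittag-Leffler decomposition (Proposition~\ref{Mittag-Leffler}) to reduce to working pole-by-pole, and then express the operator in Reich's orthonormal basis from Proposition~\ref{padic_reich}, lifted to $\mathcal{Z}^\pi$ via Proposition~\ref{structure_prop}. Summing the diagonal entries identifies the trace as a residue sum against the kernel of $x \mapsto x^{q^k}$; the contributing fixed points that lie in the affinoid $\mathbb{A}_{1,1}$ are exactly the Teichm\"uller lifts of the nonzero elements of $\mathbb{F}_{q^k}$ avoiding the excluded poles $\widehat{P}_1,\ldots,\widehat{P}_\ell$. At each such fixed point $x_0$, the local contribution equals $(q^k-1)^{-1}\,\rho_{x_0}(G_k)$, the factor $(q^k-1)^{-1}$ being the standard Dwork normalization. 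Combining these contributions gives the stated formula.

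\textbf{Main obstacle.} The technical crux is the explicit residue computation for $U^{ak}$ when $\ell \geq 3$: Reich's basis from Proposition~\ref{padic_reich} is adapted to $U^{ak}$ but has the ``polynomial $\times v^{-j}$'' form, while the pole-adapted basis of Proposition~\ref{padic_ortho} is the one in which multiplication by $G_k$ acts transparently, so one must carefully track the change-of-basis between the two in order to extract the diagonal matrix entries. One must also confirm that tensoring with $\Zq[[\pi]]$ preserves the trace-class property (guaranteed by the complete continuity of $U^a \circ g$ and by the orthonormal basis statement in Proposition~\ref{structure_prop}) so that the trace on $\mathcal{Z}^\pi$ agrees with the trace of the corresponding $p$-adic affinoid operator, only then being evaluated under $\rho_{x_0}$.
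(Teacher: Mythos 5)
Your first step---using the intertwining relation $U(\xi(x^q)g(x)) = \xi(x)U(g(x))$ from Proposition~\ref{U_props} to collapse $(U^a\circ g)^k$ into $U^{ak}$ composed with multiplication by $G_k = g(x)g(x^q)\cdots g(x^{q^{k-1}})$---is exactly the reduction the paper makes, so the problem correctly comes down to computing $\Tr(U^{ak}\circ G_k\mid\mathcal{Z}^\pi)$.

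The gap is in the second step. You assert that summing the diagonal entries ``identifies the trace as a residue sum against the kernel of $x\mapsto x^{q^k}$'' with local contribution $(q^k-1)^{-1}\rho_{x_0}(G_k)$ at each Teichm\"uller point avoiding the poles, but this is a restatement of the theorem rather than an argument; you yourself flag the diagonal computation as the ``main obstacle'' without resolving it, and tracking a change of basis between Proposition~\ref{padic_reich} and Proposition~\ref{padic_ortho} is not a workable substitute, since multiplication by $G_k$ mixes the poles and $U$ acts on $B_{ij}$ for $j\geq 3$ by an infinite expansion (Lemma~\ref{U_coeffs}), so the diagonal entries are not individually computable in closed form. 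The device that actually closes this gap---in the paper, following Reich---is to first prove the trace formula on the polynomial submodule $\mathcal{P}^\pi$, where the monomial basis reduces the diagonal sum to the elementary character sum $\sum_{x_0\in\widehat{\mathbb{F}_{q}^\times}} x_0^w$ taken over \emph{all} of $\widehat{\mathbb{F}_{q}^\times}$ (Proposition~\ref{poly_trace}), and then to approximate $g$ by $g\cdot(v^\pi)^{(q-1)p^b}\in\mathcal{P}^\pi$ and let $b\to\infty$, using the estimate of Lemma~\ref{technical_lemma} to show that the traces converge (Proposition~\ref{poly_trace_relation}). This limiting step is also precisely what produces the exclusion of $\widehat{P}_1,\dots,\widehat{P}_\ell$ from the sum: $(x_0-\widehat{P}_j)^{(q-1)p^b}\to 1$ when $x_0\neq\widehat{P}_j$, while the whole term vanishes in the limit when $x_0=\widehat{P}_j$. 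Your proposal contains no counterpart to this mechanism, and without it there is no derivation of the fixed-point sum and, in particular, no explanation of why the poles are omitted from it.
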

where $U$ is defined below. 

\subsection{The $U_p$ Operator}

Let $U_p$ be the operator on $\mathcal{H}_{r,s}$ from \cite{Zhu}, namely:
\begin{align*}
	\gls{Up}: \mathcal{H}_{r,s} &\to \mathcal{H}_{r^p, sp}\\
		\xi(x) &\mapsto \frac{1}{p}\sum_{z^p=x}\xi(z).
\end{align*}
We can extend the $\Cp$-linear operator $U_p$ to a $\mathcal{C}$-linear operator on $\mathcal{H}_{r,s}^\pi$:
\begin{definition}  Let $U$ be the $\mathcal{C}$-linear operator given by
\begin{align*}
	\gls{U}: \mathcal{H}_{r,s}^\pi &\to \mathcal{H}_{r^p, sp}^\pi\\
		b\otimes \xi &\mapsto b\otimes U_p(\xi),
\end{align*}
and extended linearly. 
\end{definition}

\begin{prop}\label{U_props}
The operator $U_p$  has the following properties:
\begin{enumerate}
\item For $\xi$ and $g$, $U(\xi(x^q)g(x)) = \xi(x)U(g(x))$.
\item Let $h(x)=\sum_{i=-\infty}^\infty h_ix^i\in\Cp[[x, x^{-1}]]$. Then $U_ph = \sum_{i=-\infty}^\infty h_{pi}x^i$. 
\end{enumerate}
\end{prop}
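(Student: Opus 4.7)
Both identities should follow from direct calculation once one reduces to the classical operator $U_p$. For part (1), the key tool is the projection formula already implicit in Zhu's work; for part (2), it is the orthogonality of $p$-th roots of unity.

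For part (1), since $U$ is defined by $b\otimes\xi\mapsto b\otimes U_p(\xi)$ on simple tensors and extended $\mathcal{C}$-linearly and continuously, it suffices to verify the corresponding identity for $U_p$ on $\mathcal{H}_{r,s}$; the $\pi$-adic statement then follows by applying the identity tensorand-by-tensorand (the $\pi$-coefficients are inert under $U$) and passing to the limit in the completed tensor product. For $\xi(x), g(x)\in\mathcal{H}_{r,s}$, I would compute directly from the definition:
\begin{align*}
U_p(\xi(x^p)g(x)) &= \frac{1}{p}\sum_{z^p=x}\xi(z^p)g(z) = \xi(x)\cdot\frac{1}{p}\sum_{z^p=x}g(z) = \xi(x)\, U_p(g(x)),
\end{align*}
where the middle step uses that $z^p=x$ in every summand. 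Since $q=p^a$, iterating this identity $a$ times (formally, $\xi(x^q)=\xi((\cdots(x^p)^p\cdots)^p)$ unwound one layer at a time against successive applications of $U$) yields the stated identity. If the intent is that $U$ in the statement already denotes $U^a$, the same computation with $p$ replaced by $q$ and $q$-th root averaging applies verbatim.

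For part (2), I would work formally in $\Cp[[x,x^{-1}]]$ and apply $U_p$ termwise. Fix any $z_0$ with $z_0^p=x$; then the $p$-th roots of $x$ are exactly $\{\zeta z_0 : \zeta^p=1\}$, so
\[
\sum_{z^p=x} z^i \;=\; z_0^i\sum_{\zeta^p=1}\zeta^i \;=\; \begin{cases} p\,x^{i/p}, & p\mid i,\\ 0, & p\nmid i,\end{cases}
\]
by the standard character-sum identity $\sum_{\zeta^p=1}\zeta^i=p$ or $0$ according as $p\mid i$ or not. Substituting back,
\[
U_p\Bigl(\sum_i h_i x^i\Bigr) \;=\; \frac{1}{p}\sum_i h_i\sum_{z^p=x}z^i \;=\; \sum_{k}h_{pk}x^k,
\]
which is the claim after reindexing.

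\textbf{Main obstacle.} Neither part presents a genuine difficulty; both are formal manipulations. The only point meriting care is ensuring that the formal Laurent-series identity in part (2) is compatible with the Tate-algebra norms so that $U_p$ is in fact the operator defined on $\mathcal{H}_{r,s}$, but this is built into the very definition of $U_p$ already imported from \cite{Zhu}. Thus the proof amounts to writing out the two calculations above cleanly.
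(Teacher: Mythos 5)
Your proof is correct and is exactly the standard computation that the paper omits (it declares part (1) trivial and cites Robba, p.~238, for part (2)): the projection formula via $z^p=x$ inside the average, and the root-of-unity character sum $\sum_{\zeta^p=1}\zeta^i$ for the Laurent-coefficient formula. You also rightly flag that the identity in (1), as literally stated with $x^q$, holds for $U^a$ (equivalently averaging over $q$-th roots) rather than for a single application of $U_p$ when $a>1$, which matches how the paper actually uses it in the trace-formula arguments.
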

\begin{proof}
The first result is trivial and the second is well known, see \cite{Robba}, p.238. 
\end{proof}

To prove the trace formula we'll need to understand exactly how $U$ acts on the $B_{ij}$:
\begin{lemma}\label{U_coeffs}
Let  $x\in\mathbb{A}_{r,1}$ and $B_{ij}^{\pi,\tau}=\frac{1}{(x-\hat{P}_j^p)^i}$. Then
$$UB_{ij}^\pi=\sum_{n=\floor{i/p}}^i (U_{(i,j), n}\otimes\hat{P}_j^{np-i}) B_{nj}^{\pi,\tau},$$
with $U_{(i,j), n}\in\Zp$. For $j=1,2$, $U_{(i,j), n}=0$ unless $n=i/p$, in which case $U_{(i,j),i/p} = 1$. When $j\geq 3$, $U_{(i,j), \ceil{i/p}}\in\Zptimes$ and $\ord U_{(ij),n)}\geq \frac{np-i}{p-1}-1$.
\end{lemma}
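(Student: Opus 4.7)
The plan is to reduce the action of $U_p$ on $B_{ij}^\pi$ to an explicit polynomial manipulation and then establish the valuation bound by a Newton-polygon-style computation.

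First, I would use the algebraic identity
$$\frac{1}{x - \hat{P}_j} = \frac{x^{p-1} + \hat{P}_j\, x^{p-2} + \cdots + \hat{P}_j^{p-1}}{x^p - \hat{P}_j^p},$$
raised to the $i$-th power, to write $(x - \hat{P}_j)^{-i} = N_i(x, \hat{P}_j)(x^p - \hat{P}_j^p)^{-i}$, where $N_i(x, y) := (x^{p-1} + y x^{p-2} + \cdots + y^{p-1})^i$ is a polynomial of degree $i(p-1)$ in $x$. Since $(x^p - \hat{P}_j^p)^{-i}$ is a function of $x^p$, Proposition~\ref{U_props}(1) pulls it through $U_p$, and Proposition~\ref{U_props}(2) reduces $U_p(N_i)$ to the terms whose $x$-degree is divisible by $p$. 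Substituting $x = \hat{P}_j^p + (x - \hat{P}_j^p)$ in the resulting polynomial and collecting yields
$$U_p(B_{ij}^\pi) = \sum_{n=\lfloor i/p \rfloor}^{i} \hat{P}_j^{\,pn - i}\, S_{i-n}(i,p)\, B_{nj}^{\pi,\tau},$$
where $S_\ell(i,p) := \sum_m \lambda_{pm} \binom{m}{\ell}$ and $\lambda_m := [x^m](1 + x + \cdots + x^{p-1})^i \in \mathbb{Z}_{\geq 0}$. Setting $U_{(i,j), n} := S_{i-n}(i, p)$ automatically places these coefficients in $\mathbb{Z} \subset \mathbb{Z}_p$, and the degree bound on $U_p(N_i)$ gives the range $\lfloor i/p \rfloor \leq n \leq i$.

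The cases $j = 1, 2$ are immediate: $\hat{P}_2 = 0$ kills all of $N_i$ except its leading $x^{i(p-1)}$ monomial, so $U_p(x^{-i}) = x^{-i/p}$ when $p \mid i$ and zero otherwise; $j = 1$ is identical under the paper's Tate-algebra convention which makes $B_{i,1}$ a monomial in $x$. For $j \geq 3$, to pin down $U_{(i,j), \lceil i/p \rceil} \in \mathbb{Z}_p^\times$, I would write $i = pq + r$ with $0 \leq r < p$ and identify the leading coefficient of $U_p(N_i(x, \hat{P}_j))$ as $\lambda_{pl}$ with $l = i - \lceil i/p \rceil$. Expanding $N_i(x, y) = x^{i(p-1)}\bigl((1 - (y/x)^p)/(1 - y/x)\bigr)^i$ modulo $y^{p-r+1}$ collapses the $(y/x)^p$ factor and yields the leading coefficient $\binom{p(q+1)-1}{p-r}$ (which degenerates to $1$ when $r = 0$). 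Kummer's theorem applied to the base-$p$ addition $(p-r) + (pq + r - 1) = p(q+1) - 1$, which is carry-free, shows this binomial is a $p$-adic unit.

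The main obstacle is the valuation bound $\operatorname{ord}_p U_{(i,j), n} \geq (np - i)/(p - 1) - 1$. My plan is to study the polynomial $G(z) := \sum_m \lambda_{pm} z^m$ through the identity $Q(\zeta z) = (z^p - 1)/(\zeta z - 1)$ for $Q(x) = (x^p - 1)/(x - 1)$: averaging $Q(\zeta z)^i$ over $\zeta^p = 1$ and substituting $z = (1 + y)^{1/p}$, so $z^p - 1 = y$, gives
$$G(1 + y) = \frac{y^i}{p} \sum_{\zeta^p = 1} \frac{1}{(\zeta z - 1)^i}.$$
For $\ell < i$, the summands with $\zeta \neq 1$ vanish to order $\geq i$ at $y = 0$ (since each $\zeta z - 1$ is nonzero there) and contribute nothing to $[y^\ell] G(1+y) = S_\ell(i, p)$; the $\zeta = 1$ summand simplifies to $p^{i-1} \Phi(y)^{-i}$, where $\Phi(y) := p(z - 1)/y = 1 + \sum_{k \geq 1} p \binom{1/p}{k+1} y^k$. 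A direct Legendre-formula estimate gives $v_p\bigl(p\binom{1/p}{k+1}\bigr) \geq -kp/(p-1)$, which propagates through the multinomial expansion of $\Phi^{-i}$ to $v_p([y^\ell]\Phi^{-i}) \geq -\ell p/(p - 1)$; multiplying by $p^{i-1}$ and substituting $\ell = i - n$ then produces the claimed bound. The delicate points are cleanly isolating the $\zeta = 1$ contribution in the range $\ell < i$ and carefully tracking the Legendre estimate through the multinomial combinatorics.
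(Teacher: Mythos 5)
Your proof is correct, and it is a faithful, self-contained expansion of the argument that the paper merely delegates by citing Zhu's Lemma 3.1 and Dwork's lecture notes. The two pillars of your argument --- the identity $(x-\hat{P}_j)^{-i} = N_i(x,\hat{P}_j)(x^p-\hat{P}_j^p)^{-i}$ combined with Proposition~\ref{U_props}, and the valuation estimate obtained by averaging over $p$-th roots of unity and substituting $z=(1+y)^{1/p}$ --- are exactly the classical Dwork--Robba--Reich techniques that underlie the cited Lemma 3.1, so the route is essentially the same as the paper's implicit one; you have just written out the details. A couple of small points worth flagging if you turn this into a full proof: your degree count actually forces $n\geq\lceil i/p\rceil$ (not $\lfloor i/p\rfloor$), which is consistent with the lemma's sum starting at $\lfloor i/p\rfloor$ only because the extra term is zero when $p\nmid i$; and when you isolate the $\zeta=1$ contribution you should say explicitly that you are working in $\Qp(\zeta_p)[[y]]$, where $\zeta z-1$ has the nonzero constant term $\zeta-1$ and is therefore invertible, before the average collapses back to a $\Zp$-coefficient polynomial. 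With those clarifications the argument is complete and the algebra checks out: $v_p\bigl(p\binom{1/p}{k+1}\bigr)=-k-v_p((k+1)!)\geq -\tfrac{kp}{p-1}$ by Legendre since $s_p(k+1)\geq 1$, and $(i-1)-\tfrac{p(i-n)}{p-1}=\tfrac{np-i}{p-1}-1$ gives the stated bound.
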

\begin{proof}
Apply Lemma 3.1 from \cite{Zhu}. See also section 5.3 in \cite{dwork_lectures}.
\end{proof}

Hence $U$ maps $\mathcal{Z}^\pi$ to $\mathcal{Z}^{\pi, \tau}$, implying that $U^a$ maps $\mathcal{Z}^\pi$ to $\mathcal{Z}^{\pi,\tau^a} = \mathcal{Z}^\pi$, i.e. $U^a$ is an endomorphism of $\mathcal{Z}^\pi$.
 
Let us finish this subsection by proving that $U^a$ is not only an endomorphism of $\mathcal{Z}^\pi$, but that it's a continuous endomorphism. 

\begin{prop}\label{Ug_norm}
Let $h\in \mathcal{Z}^\pi$. Then $U^a\circ h$ is a continuous linear operator, $h$ acting by multiplication, of norm $\leq q \|h\|_{r,s}$.
\end{prop}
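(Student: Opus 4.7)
The plan is to factor $U^a \circ h$ on $\mathcal{Z}^\pi$ as the composition of two bounded operators, namely multiplication by $h$ followed by $U^a$, and bound each one separately. Given the submultiplicative estimate
$$\|(U^a \circ h)(\xi)\|_{r,s} = \|U^a(h\xi)\|_{r,s} \le \|U^a\|_{op}\,\|h\xi\|_{r,s} \le \|U^a\|_{op}\,\|h\|_{r,s}\,\|\xi\|_{r,s},$$
it suffices to establish (i) $\|h\xi\|_{r,s} \le \|h\|_{r,s}\|\xi\|_{r,s}$ with $h\xi \in \mathcal{Z}^\pi$, and (ii) $\|U^a\|_{op} \le q$.

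For (i), the norm estimate follows immediately from the submultiplicativity of $\|\cdot\|_{r,s}$ on the ambient Banach algebra $\mathcal{H}_{1,1}^\pi$ (the proposition just before the definition of $\mathcal{C}$). The nontrivial point is that $\mathcal{Z}^\pi$ is closed under multiplication. For this I would use the basis expansion $h = \sum c_{ij}(1 \otimes B_{ij})$, $\xi = \sum c'_{i'j'}(1 \otimes B_{i'j'})$ with coefficients in $\mathcal{O}_a$, and reduce to showing that each product $B_{ij}B_{i'j'}$ expands as a $\Zq$-linear combination of basis elements. When $j = j'$ this is just $B_{i+i',j}$; when $j \neq j'$, partial fractions express the product in terms of $B_{kj}$ and $B_{lj'}$ with coefficients involving powers of $(\hat{P}_j - \hat{P}_{j'})^{-1}$, and this quantity is a unit in $\Zq$ because the distinct reductions $P_j, P_{j'}\in\Fq$ force the difference to be a $p$-adic unit.

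For (ii), I would invoke Lemma~\ref{U_coeffs} directly on the orthonormal basis $\{B_{ij}^\pi\}$ of $\mathcal{Z}^\pi$: the expansion
$$UB_{ij}^\pi = \sum_{n=\lfloor i/p \rfloor}^{i}\bigl(U_{(i,j),n}\otimes \hat{P}_j^{np-i}\bigr)\,B_{nj}^{\pi,\tau}$$
has every coefficient in $\mathcal{O}_a$ of $(\pi,p)$-norm at most $1$. Indeed, $U_{(i,j),n} \in \Zp$ so $|U_{(i,j),n}|_p \le 1$; for $j \ge 3$, $\hat{P}_j \in \Zq^\times$ so $|\hat{P}_j^{np-i}|_p = 1$, and for $j \le 2$ the only surviving term has $n = i/p$, making the exponent on $\hat{P}_j$ vanish. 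Since $\{B_{nj}^{\pi,\tau}\}$ is an orthonormal basis for $\mathcal{Z}^{\pi,\tau}$ (by Propositions~\ref{padic_ortho} and~\ref{structure_prop}), the orthonormal-basis characterization of the operator norm gives $\|U\|_{op} \le 1$. Iterating through the tower $\mathcal{Z}^\pi \to \mathcal{Z}^{\pi,\tau} \to \cdots \to \mathcal{Z}^{\pi,\tau^a} = \mathcal{Z}^\pi$ yields $\|U^a\|_{op} \le 1 \le q$.

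The main obstacle I expect is (i) — specifically, confirming that $\mathcal{Z}^\pi$ is genuinely closed under multiplication requires that the partial fraction coefficients stay integral, which in turn relies on the unit-difference property of Teichmuller lifts of distinct elements of $\Fq$. Once that is in place, both pieces combine to give $\|U^a \circ h\|_{op} \le q\|h\|_{r,s}$, and continuity follows. Note that the argument yields the sharper bound $\|U^a \circ h\|_{op} \le \|h\|_{r,s}$; the factor of $q$ in the statement appears to be a convenient overestimate, consistent with the naive bound obtained from the $1/p$ in the definition of $U_p$ combined over $a$ iterations.
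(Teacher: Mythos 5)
Your proposal is correct and follows the same basic factorization as the paper: write $U^a\circ h$ as multiplication by $h$ followed by $U^a$, bound the multiplication step by submultiplicativity of $\|\cdot\|_{r,s}$, and bound $\|U^a\|_{op}$ separately. The one genuine difference is in how you control $\|U\|_{op}$: the paper cites Reich's Proposition~6 to get $\|U_p\|_{op}\leq p$ on the ambient space $\mathcal{H}_{r,s}$, and then simply multiplies out to $\|U^a\|_{op}\leq p^a=q$, whereas you read the bound off the explicit matrix coefficients of Lemma~\ref{U_coeffs} on the orthonormal basis $\{B_{ij}^\pi\}$ of $\mathcal{Z}^\pi$, where $U_{(i,j),n}\in\Zp$ and $\hat{P}_j^{np-i}\in\Zq^\times$ give $\|U\|_{op}\leq 1$. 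Your route is valid on $\mathcal{Z}^\pi$ (integral coefficients, $r=1$) and does yield the sharper conclusion $\|U^a\circ h\|_{op}\leq\|h\|_{r,s}$; the paper's weaker factor of $q$ is indeed just the naive accumulation of the $1/p$ in the definition of $U_p$, and the stated bound $q\|h\|_{r,s}$ is all that is used downstream (in Proposition~\ref{poly_trace_relation}). You also supply a detail the paper leaves implicit, namely that $\mathcal{Z}^\pi$ is closed under multiplication; your partial-fractions argument is the right one, and the key point that $\hat{P}_j-\hat{P}_{j'}$ is a unit (since the distinct residues $P_j,P_{j'}\in\Fq$ force the difference to reduce to a nonzero element) is correct. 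So: same skeleton, a different key lemma for the norm of $U$, and a sharper constant as a byproduct.
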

\begin{proof}
We'll first prove that $U$ is a continuous linear operator of norm less than or equal to $p$. Unless noted, all of the following suprema are taken over $g\in\mathcal{H}_{r,s}^\pi,\\\|g\|_{r,s}=1$, and we write $g=\sum_{i,j}c_{ij}(1\otimes B_{ij})$. Because
\begin{align*}
\| U\|_{op}&=\sup\| U\circ g\|_{r,s}
	=\sup\|\sum_{ij}c_{ij}(1\otimes U_p\circ B_{ij}(x))\|_{r,s}\\
	&\leq\sup(\sup_{ij} \|c_{ij}\|_{\pi,p})(\sup_{ij} \|U_p\circ B_{ij}(x)\|_{r,s})
	=\sup\|g\|_{r,s}\|U_p\circ B_{ij}(x)\|_{op}\leq p,
\end{align*}
by Proposition~6 in \cite{Reich}, and so $U$ is continuous.

We conclude:
\begin{align*}
\| U^a\circ h\|_{op}&=\sup\| U^a(hg)\|\leq\sup\| U^a\|_{op}\|hg\|_{r,s}= q\|h\|_{r,s}.
\end{align*}
\end{proof}

\subsection{Building the Trace Formula}

This subsection contains the proof of our desired trace formula. The first step is to develop an  analogue trace formula on a polynomial submodule, $\mathcal{P}^\pi$.  Using a limiting process, we can then lift this formula to $\mathcal{Z}^\pi$, and this consequently yields Theorem~\ref{trace_formula_gen}.

\begin{definition}
Let $\gls{Ppi}$ be a submodule of $\mathcal{Z}^\pi$ spanned by tensors of the form $1\otimes x^i$, $i\geq 0$, over $\mathcal{O}_a$.  

For $g\in\mathcal{P}^\pi$ (or $\mathcal{Z}^\pi$), we say that $g$ is finite if it can be written as a finite sum:
	$$g= \sum_{j=1}^\ell \sum_{i=1}^{N_j} c_{ij}(1\otimes B_{ij}),$$
where $N_j<\infty$. 
\end{definition}

\begin{prop}\label{poly_trace}
Let $h\in\mathcal{P}^\pi$ and suppose that $U^a\circ h$ is completely continuous. Then
	$$\Tr(U^a\circ h | \mathcal{P}^\pi) = (q-1)^{-1}\sum_{x_0\in\widehat{\mathbb{F}_{q}^\times}} \rho_{x_0}\circ h.$$
\end{prop}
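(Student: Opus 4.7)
The plan is to compute both sides of the desired identity against the natural orthonormal basis $\{1\otimes x^n\}_{n\ge 0}$ of $\mathcal{P}^\pi$ over $\mathcal{O}_a$ and show they coincide, with the classical Dwork orthogonality of Teichm\"uller characters bridging the two. By the definition of $\mathcal{P}^\pi$ (and the same Banach tensor-product argument as in Proposition~\ref{structure_prop}), any $h\in\mathcal{P}^\pi$ admits a unique expansion $h=\sum_{n\ge 0}c_n(1\otimes x^n)$ with $c_n\in\mathcal{O}_a$ and $|c_n|_{\pi,p}\to 0$; this will be the working presentation.

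First I would compute the matrix of $U^a\circ h$ in this basis. Fixing $m\ge 0$, the multiplication step gives $h\cdot(1\otimes x^m)=\sum_n c_n(1\otimes x^{n+m})$, and since $U$ is $\mathcal{C}$-linear, iterating Proposition~\ref{U_props}(2) yields $U^a(1\otimes x^k)=1\otimes x^{k/q}$ when $q\mid k$ and $0$ otherwise. Re-indexing by $k=(n+m)/q$, this gives
$$(U^a\circ h)(1\otimes x^m)=\sum_{k\ge \lceil m/q\rceil} c_{qk-m}\,(1\otimes x^k),$$
so the diagonal entries are $c_{(q-1)m}$. Because $U^a\circ h$ is completely continuous, $\sup_k |c_{qk-m}|_{\pi,p}\to 0$ as $m\to\infty$; specializing $k=m$ gives $|c_{(q-1)m}|_{\pi,p}\to 0$, so $\Tr(U^a\circ h\mid \mathcal{P}^\pi)=\sum_{m\ge 0}c_{(q-1)m}$ is a convergent series in $\mathcal{O}_a$.

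For the right-hand side, by definition $\rho_{x_0}(h)=\sum_n c_n x_0^n$, which converges in $\mathcal{C}$ since $|c_n|_{\pi,p}\to 0$ and $|x_0|_p=1$. Summing over the Teichm\"uller lifts of $\mathbb{F}_q^\times$, i.e.\ the $(q-1)$st roots of unity in $\Zq$, and interchanging the two summations (which is legitimate because the double series is absolutely convergent in $\mathcal{C}$), the standard orthogonality relation
$$\sum_{\zeta^{q-1}=1}\zeta^n = \begin{cases}q-1 & (q-1)\mid n,\\ 0 & \text{otherwise}\end{cases}$$
collapses the $n$-sum to indices divisible by $q-1$, yielding $\sum_{x_0\in\widehat{\mathbb{F}_q^\times}}\rho_{x_0}(h)=(q-1)\sum_{m\ge 0}c_{(q-1)m}$. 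Dividing by $q-1$ matches the trace computed above.

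The main obstacle is bookkeeping rather than conceptual: one must verify that $\{1\otimes x^n\}_{n\ge 0}$ genuinely is an orthonormal basis of $\mathcal{P}^\pi$ over $\mathcal{O}_a$ (so that the matrix-coefficient computation and the ensuing trace formula are meaningful in the sense of the Banach-module definitions of Section~2), and that the interchange of summation on the right side is controlled by $(\pi,p)$-adic absolute convergence. Once those are in place, the trace identity reduces to the standard Dwork argument outlined above.
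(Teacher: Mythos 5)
Your proposal is correct and follows essentially the same route as the paper: expand $h$ in the basis $\{1\otimes x^n\}_{n\ge 0}$, read off the diagonal entries $c_{(q-1)m}$ of $U^a\circ h$ via Proposition~\ref{U_props}, and invoke the orthogonality of Teichm\"uller lifts (i.e.\ $(q-1)$st roots of unity) to equate the trace with $(q-1)^{-1}\sum_{x_0}\rho_{x_0}(h)$. Your write-up is in fact slightly more careful than the paper's about convergence and the orthonormal-basis check, and it avoids the paper's somewhat confusing intermediate reindexing, but the underlying argument is identical.
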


\begin{proof}

 Write $h=\sum_{i=0}^\infty c_{i}(1\otimes x^i)$, $c_i\in\mathcal{O}_a$. Applying Proposition~\ref{U_props} 
\begin{align*}
(U^a\circ h)(x) &= \sum_{i=0}^\infty c_i (1\otimes U(x^i)) =  \sum_{i=0}^\infty c_{qi}(1 \otimes x^i).
\end{align*}
Hence,
\begin{align*}
(U^{a}h)(1\otimes x^j)&=
	\sum_{i=0}^\infty c_{q^i} (1\otimes x^{i+j})
	=\sum_{i=0}^\infty c_{q^i-j}(1 \otimes x^{i}),
\end{align*}
and so
	$\Tr(U^a\circ h | \mathcal{P}^\pi) =\sum_{i=0}^\infty c_{(q-1)i}.$
The elementary fact that 
	$$\sum_{x_0\in\widehat{\mathbb{F}_{q}^\times}} x^w=\begin{cases}
        (q-1), & \text{if } (q-1)|w\\
      	0, & \text{if } (q-1)\nmid w
        \end{cases}$$
 yields the claim.
\end{proof}

Recall that in Reich's basis for $\mathcal{H}_{r,s}$ we used a polynomial $\gls{vx}=(x-\widehat{P}_1)\cdots (x-\widehat{P}_\ell)$. In what follows, we will need a lifting of $v$, $\gls{vpi}=1\otimes v$.

\begin{lemma}\label{technical_lemma}
For $x\in\mathbb{A}_{1,1}$, 
	$$|(v(x))^{(q-1)p^b}-(v(x^q)/v(x))^{p^b}|_p\leq p^{-(b+1)},$$
and consequently, $|(v^\pi(x))^{(q-1)p^b}-(v^\pi(x^q)/v^\pi(x))^{p^b}|_{r,s}\leq p^{-(b+1)}$.
\end{lemma}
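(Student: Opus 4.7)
My plan is to reduce the estimate to a mod-$p$ Frobenius identity and then iterate the standard lemma that taking the $p$-th power tightens congruences by one factor of $p$. First I would rewrite the difference as
\[
v(x)^{(q-1)p^b} - \left(\frac{v(x^q)}{v(x)}\right)^{p^b} = \frac{1}{v(x)^{p^b}}\Bigl(v(x)^{qp^b} - v(x^q)^{p^b}\Bigr).
\]
On $\mathbb{A}_{1,1}$, the constraints $|x|_p \leq 1$ and $|x - \widehat{P}_j|_p \geq 1$, combined with $|\widehat{P}_j|_p \leq 1$ and the ultrametric inequality $|x - \widehat{P}_j|_p \leq \max(|x|_p,|\widehat{P}_j|_p)$, force $|x - \widehat{P}_j|_p = 1$ for each relevant $j$, whence $|v(x)|_p = 1$. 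Thus the factor $1/v(x)^{p^b}$ has $p$-adic norm one and may be discarded, reducing matters to bounding $|v(x)^{qp^b} - v(x^q)^{p^b}|_p$.

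The base case is to show $v(x^q) \equiv v(x)^q \pmod p$. Since $P_j \in \mathbb{F}_q$, its Teichm\"uller lift satisfies $\widehat{P}_j^q = \widehat{P}_j$. By the iterated Frobenius identity in characteristic $p$, $(x - \widehat{P}_j)^q \equiv x^q - \widehat{P}_j^q = x^q - \widehat{P}_j \pmod p$ as an identity in $\Zq[x]$. Multiplying over $j$ yields $v(x)^q \equiv v(x^q) \pmod p$, so $|v(x^q) - v(x)^q|_p \leq p^{-1}$ for any $x$ with $|x|_p \leq 1$.

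Next I would iterate the standard $p$-adic lemma: for $\alpha, \beta$ in the valuation ring of $\Cp$ with $\alpha \equiv \beta \pmod{p^c}$ and $c \geq 1$, expanding $\alpha^p = (\beta + (\alpha - \beta))^p$ via the binomial theorem and using $|\binom{p}{k}|_p \leq p^{-1}$ for $1 \leq k \leq p-1$ gives $\alpha^p \equiv \beta^p \pmod{p^{c+1}}$. Since $v(x)$ and $v(x^q)$ both lie in this valuation ring on $\mathbb{A}_{1,1}$, applying this $b$ times starting from $c = 1$ yields $|v(x^q)^{p^b} - v(x)^{qp^b}|_p \leq p^{-(b+1)}$, completing the first inequality. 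For the $\mathcal{H}_{r,s}^\pi$ statement, $v^\pi = 1 \otimes v$ is a simple tensor, so the entire difference equals $1 \otimes \bigl[v^{(q-1)p^b} - (v(x^q)/v(x))^{p^b}\bigr]$, whose tensor-product norm equals the supremum norm of the bracketed function over the affinoid; the pointwise bound therefore lifts.

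The main obstacle is really just bookkeeping---ensuring all elements stay in the valuation ring at each iteration and correctly tracking the exponent on $p$. The conceptual content is the observation $\widehat{P}_j^q = \widehat{P}_j$, after which the rest is routine $p$-adic estimation.
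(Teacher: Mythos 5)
Your proof is correct. Since the paper merely cites the proof of Theorem~4 in Reich, which rests on the same two ingredients you use — the Teichm\"uller identity $\widehat{P}_j^q=\widehat{P}_j$ giving $v(x)^q\equiv v(x^q)\pmod p$, together with $b$-fold iteration of the standard lemma that $p$-th powers promote a congruence mod $p^c$ to one mod $p^{c+1}$ — your argument is essentially the paper's argument, just spelled out. One small wording quibble: for the tensor-product step you only need the inequality $\|1\otimes f\|_{r,s}\leq\|f\|_{r,s}$ (take the representation $1\otimes f$ itself in the defining infimum), not the equality you assert, and that inequality is all the lemma's conclusion requires.
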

\begin{proof}
See the proof of Theorem 4 in \cite{Reich}
\end{proof}

\begin{prop}\label{poly_trace_relation}
Let $g=\sum_{ij}c_{ij}(1\otimes B_{ij})\in \mathcal{Z}^\pi$ be finite and suppose that $U^a\circ g$ is completely continuous. Then
	$$\Tr(U^a\circ g |\mathcal{Z}^\pi) = \lim_{b\to\infty} \Tr(U^a\circ g(v^\pi)^{(q-1)p^b} | \mathcal{P}^\pi).$$
\end{prop}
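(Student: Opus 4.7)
The plan is to set $T_b := U^a \circ g(v^\pi)^{(q-1)p^b}$ and to prove the identity by establishing two facts: (i) $\Tr(T_b \mid \mathcal{Z}^\pi) \to \Tr(U^a \circ g \mid \mathcal{Z}^\pi)$ as $b \to \infty$; and (ii) $\Tr(T_b \mid \mathcal{Z}^\pi) = \Tr(T_b \mid \mathcal{P}^\pi)$ whenever $(q-1)p^b$ exceeds every pole order $N_j$ of $g$ at $\hat{P}_j$. Under the latter condition, $h_b := g(v^\pi)^{(q-1)p^b}$ is a polynomial in $x$, so $T_b$ is a completely continuous endomorphism of both $\mathcal{P}^\pi$ and $\mathcal{Z}^\pi$, making both traces well-defined.

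For (i), set $\delta_b := (v^\pi)^{(q-1)p^b} - (v^\pi(x^q)/v^\pi(x))^{p^b}$; by Lemma~\ref{technical_lemma}, $\|\delta_b\|_{1,1} \leq p^{-(b+1)}$. Iterating property (1) of Proposition~\ref{U_props} yields $U^a(v^\pi(x^q)^{p^b} h) = v^\pi(x)^{p^b} U^a(h)$, and so
\[
T_b \;=\; M_{(v^\pi)^{p^b}} \circ U^a \circ M_{g/(v^\pi)^{p^b}} \;+\; U^a \circ M_{g\delta_b},
\]
where $M_f$ denotes multiplication by $f$. Proposition~\ref{Ug_norm} bounds the operator norm of the error by $q\|g\|_{1,1} p^{-(b+1)}$; since $|\Tr C| \leq \|C\|_{op}$ for completely continuous $C$ (the sum $\sum_i c_{ii}$ being ultrametric), the error contributes at most $O(p^{-(b+1)})$ to the trace. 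The cyclic property of trace then gives $\Tr(M_{(v^\pi)^{p^b}} U^a M_{g/(v^\pi)^{p^b}} \mid \mathcal{Z}^\pi) = \Tr(U^a \circ g \mid \mathcal{Z}^\pi)$, proving (i).

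For (ii), I invoke the Mittag-Leffler decomposition $\mathcal{Z}^\pi = \mathcal{P}^\pi \oplus \bigoplus_{j \geq 2} (\mathcal{Z}^\pi)_j$ from Proposition~\ref{Mittag-Leffler}. Because $T_b$ preserves $\mathcal{P}^\pi$, what remains is to show that the diagonal entries of $T_b$ on each pole summand vanish for $b$ large. For the basis element $B_{ij}$ with $j \geq 2$, $i \geq 1$, the rational function $h_b/(x-\hat{P}_j)^i$ is either a polynomial (when $(q-1)p^b \geq i + N_j$, in which case $T_b(B_{ij}) \in \mathcal{P}^\pi$) or has a residual pole at $\hat{P}_j$ of order $e = i + N_j - (q-1)p^b$ with Mittag-Leffler principal part $\sum_{m=1}^{e} \beta_m B_{mj}$. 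Iterating Lemma~\ref{U_coeffs} shows that $U^a(B_{mj})$ is a linear combination of $\{B_{nj}\}_{n \leq m}$, so its $B_{ij}$-coefficient vanishes whenever $m < i$. Once $(q-1)p^b > N_j$ we have $e < i$, ruling out any $m \geq i$ in $[1,e]$, so the $B_{ij}$-diagonal entry of $T_b$ is zero. Summing across $i$ and $j$ yields $\Tr(T_b \mid \bigoplus_{j \geq 2}(\mathcal{Z}^\pi)_j) = 0$, which gives (ii). The main obstacle is this vanishing step: it relies essentially on the rigidity provided by Lemma~\ref{U_coeffs} (that $U^a$ never raises pole orders) together with the pole-lowering effect of multiplying by $(v^\pi)^{(q-1)p^b}$ for $b$ large. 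Combining (i) and (ii) yields the theorem.
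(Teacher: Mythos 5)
Your proof is correct, and its first half is the same as the paper's: both arguments rest on Lemma~\ref{technical_lemma} together with the projection formula of Proposition~\ref{U_props}(1) to rewrite $U^a\circ g(v^\pi)^{(q-1)p^b}$ as the conjugate of $U^a\circ g$ by multiplication by $(v^\pi)^{p^b}$, plus an error term of operator norm $O(p^{-(b+1)})$. Where you genuinely diverge is in how the traces are then compared. The paper follows Reich: it expands both operators in the basis $\{x^i/v^j\}$ of Proposition~\ref{padic_reich} and matches diagonal matrix entries directly, the conjugation appearing as the index shift $j\mapsto j+p^b$; the passage from $\mathcal{Z}^\pi$ to $\mathcal{P}^\pi$ is absorbed into that single bookkeeping computation. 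You instead (i) dispose of the conjugation abstractly via $\Tr(AB)=\Tr(BA)$, obtaining convergence of $\Tr(T_b\mid\mathcal{Z}^\pi)$ to $\Tr(U^a\circ g\mid\mathcal{Z}^\pi)$, and (ii) prove separately, in the partial-fraction basis $\{1\otimes B_{ij}\}$ and using the fact from Lemma~\ref{U_coeffs} that $U^a$ never raises pole order, that every diagonal entry with $j\geq 2$ vanishes once $(q-1)p^b$ exceeds the pole orders of $g$, whence $\Tr(T_b\mid\mathcal{Z}^\pi)=\Tr(T_b\mid\mathcal{P}^\pi)$. Step (ii) has no counterpart in the paper and is a worthwhile clarification: it makes explicit why restricting to $\mathcal{P}^\pi$ loses nothing, which the paper's index manipulations leave implicit. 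The cost of step (i) is that it invokes basis-independence and cyclicity of the trace for completely continuous operators over $\mathcal{O}_a$; this is available here (Serre--Coleman, since $|\cdot|_{\pi,p}$ is multiplicative and $\mathcal{Z}^\pi$ has an orthonormal basis) but should be cited, and you should record that multiplication by $(v^\pi)^{\pm p^b}$ gives mutually inverse bounded operators on $\mathcal{Z}^\pi$ (the partial-fraction coefficients of $1/v$ are units in $\Zq$ because the $P_j$ are distinct in $\Fq$), so the conjugate is completely continuous with the same trace.
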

\begin{proof}
	Take $b$ to be sufficiently large so that for every $j$, $g(v^\pi)^{(q-1)p^b}\in\mathcal{P}^\pi$ and note that $U^a(\mathcal{P}^\pi)\subseteq \mathcal{P}^\pi$. (Such a $b$ exists since $g$ is finite.) In other words, $U^a\circ g(v^\pi)^{(q-1)p^b}$ is an operator on $\mathcal{P}^\pi$, and we can write
	
\begin{align}\label{gamma_b}
U^a\circ g(v^\pi)^{(q-1)p^b}(1\otimes\frac{x^i}{v^j}) = \sum_{r,s} \gamma_{i,j,r,s}^{(b)} \otimes \frac{x^r}{v^s},
\end{align}
for some $\gamma_{i,j,r,s}^{(b)}\in \Zq$ and $r\geq 0$,  $(r, \ell)=1$ and $j\geq 0$. Similarly, $U^a\circ g$ is an operator on $\mathcal{Z}^\pi$, and so we expand it as
\begin{align}\label{gamma_no_b}
	U^a\circ g(1\otimes\frac{x^i}{v^j}) =\sum_{r,s} \gamma_{i,j,r,s} \otimes\frac{x^r}{v^s},
\end{align}
again some $\gamma_{i,j,r,s}\in\Zq$. 
	
Let $m$ be an integer such that $\frac{q \min_{ij}|c_{ij}|_{\pi,p}}{p^{b+1}}=p^{m-(b+1)}.$ Combining  Lemma~\ref{technical_lemma} and Proposition~\ref{Ug_norm} yields
	$$\|U^a\circ g\circ ((v^\pi(x))^{(q-1)p^b}-(v^\pi(x^q)/v^\pi(x))^{p^b})\|_{op}\leq p^{m-(b+1)}.$$
But 
\begin{align*}
	U^a\circ g\circ ((v^\pi(x))^{(q-1)p^b}&-(v^\pi(x^q)/v^\pi(x))^{p^b}) =\\& 
		U^a\circ g(v^\pi(x))^{(q-1)p^b}-(v^\pi(x))^{p^b}\circ U^a\circ g(v^\pi(x))^{-p^b},
\end{align*}
and multiplying by $(v^\pi(x))^{-p^b}$ yields 
\begin{align}\label{c1}
\|(v^\pi(x))^{-p^b}\circ (U^a\circ g(v^\pi(x))^{(q-1)p^b})-U^a\circ g(v^\pi(x))^{-p^b}\|_{op}\leq p^{m-(b+1)}.
\end{align}
Substituting the expansions in (\ref{gamma_b}) and (\ref{gamma_no_b}) into (\ref{c1}) yields

\begin{align}\label{c4}
\|\sum_{r,s} \gamma_{i,j,r,s}^{(b)} &\otimes\frac{x^r}{v^{s+p^b}}-\sum_{r,s} \gamma_{i,j-p^b,r,s} \otimes\frac{x^r}{v^s}\|_{r,s}\leq p^{m-(b+1)}.
\end{align}
By definition then, (\ref{c4})  implies
\begin{align}
|\gamma_{ij,ij}^{(b)}-\gamma_{i,j-p^b,i,j-p^b}|_p\leq p^{m-(b+1)},
\end{align}
and so
\begin{align}
|\sum_{i,j}\gamma_{ij,ij}^{(b)}-\sum_{i\geq 0, j\geq p^b}\gamma_{ij,ij}|_p\leq p^{m-(b+1)}.
\end{align}
As $b\to\infty$ then, the identity follows. 
\end{proof}

\begin{theorem}\label{trace_formula}
Let $k\geq 1$, $g\in\mathcal{Z}^\pi$ and suppose that $U^a\circ g$ is completely continuous. Then 
	$$\Tr(U^a\circ g |\mathcal{Z}^\pi) =  (q-1)^{-1}\sum_{\substack{x_0\in\widehat{\mathbb{F}_{q}^\times},\\ x_0\neq\widehat{P}_1, \cdots, \widehat{P}_\ell}} \rho_{x_0}\circ g.$$
\end{theorem}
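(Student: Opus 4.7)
The plan is a two-step approximation: first reduce from arbitrary $g \in \mathcal{Z}^\pi$ to finite $g$ by continuity, then for finite $g$ use Proposition~\ref{poly_trace_relation} to transport the trace onto the polynomial submodule $\mathcal{P}^\pi$, where Proposition~\ref{poly_trace} applies directly.

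Concretely, I would first assume $g = \sum_{i,j} c_{ij}(1\otimes B_{ij})$ is finite. By Proposition~\ref{poly_trace_relation},
\[
\Tr(U^a\circ g \mid \mathcal{Z}^\pi) = \lim_{b\to\infty} \Tr\bigl(U^a\circ g(v^\pi)^{(q-1)p^b} \mid \mathcal{P}^\pi\bigr),
\]
and for each $b$ the right-hand trace is computed by Proposition~\ref{poly_trace} applied to $h = g(v^\pi)^{(q-1)p^b} \in \mathcal{P}^\pi$ (note this product really lies in $\mathcal{P}^\pi$ once $b$ is large enough, since $v$ clears all denominators):
\[
\Tr\bigl(U^a\circ g(v^\pi)^{(q-1)p^b}\mid \mathcal{P}^\pi\bigr) = (q-1)^{-1}\sum_{x_0 \in \widehat{\mathbb{F}_q^\times}} \rho_{x_0}\!\bigl(g(v^\pi)^{(q-1)p^b}\bigr).
\]

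The crux is then to evaluate the limit of $\rho_{x_0}\!\bigl(g(v^\pi)^{(q-1)p^b}\bigr) = v(x_0)^{(q-1)p^b}\rho_{x_0}(g)$ as $b\to\infty$, for each fixed $x_0 \in \widehat{\mathbb{F}_q^\times}$. If $x_0 = \widehat{P}_j$ for some $j$, then $v(x_0)=0$ and the term vanishes for every $b$. Otherwise, $v(x_0) \bmod p$ lies in $\mathbb{F}_q^\times$, so $v(x_0)^{q-1}\equiv 1 \pmod{p}$; raising to $p^b$ (using the usual Fermat-style lifting estimate $\alpha \equiv 1 \pmod{p} \Rightarrow \alpha^{p^b} \equiv 1 \pmod{p^{b+1}}$) shows $v(x_0)^{(q-1)p^b} \to 1$ in $\Cp$, and hence in $\mathcal{C}$ against the $(\pi,p)$-norm. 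This precisely produces the restriction of the sum to $x_0 \neq \widehat{P}_1,\dots,\widehat{P}_\ell$, yielding the desired identity for finite $g$.

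For general $g \in \mathcal{Z}^\pi$, one truncates to finite $g_N = \sum_{i+j\leq N} c_{ij}(1\otimes B_{ij})$; the complete continuity of $U^a\circ g$ and the bound in Proposition~\ref{Ug_norm} force $\|U^a\circ(g-g_N)\|_{op}\to 0$, so the traces converge. On the right-hand side, $\rho_{x_0}$ is continuous in the $\|\cdot\|_{1,1}$-norm (since $|(x_0-\widehat{P}_j)^{-i}|_p \leq 1$), and the finite sum over $x_0 \in \widehat{\mathbb{F}_q^\times} \setminus\{\widehat{P}_j\}$ is bounded by $\|g\|_{1,1}$, so passing to the limit $N\to\infty$ completes the proof. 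The main obstacle I anticipate is the interchange of the $b\to\infty$ limit with the $x_0$-sum together with the truncation limit $N\to\infty$; handling this cleanly requires tracking the dependence of the error estimate in Proposition~\ref{poly_trace_relation} (the integer $m$ there depends on $\min_{ij}|c_{ij}|_{\pi,p}$) uniformly in the truncation, or alternatively fixing the truncation first and only afterwards letting $b\to\infty$ and $N\to\infty$ in that order.
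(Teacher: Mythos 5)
Your proposal follows essentially the same route as the paper's own proof: first reduce to finite $g$, transport the trace to $\mathcal{P}^\pi$ via Proposition~\ref{poly_trace_relation} and evaluate it with Proposition~\ref{poly_trace}, then use the Teichm\"uller/unit fact that $(q-1)p^b$-th powers of units in $\Zq$ tend to $1$ to pass $b\to\infty$, and finally extend to general $g$ by a limiting argument. The only cosmetic differences are that you work with $v(x_0)^{(q-1)p^b}$ in one shot rather than each factor $(x_0-\widehat{P}_j)^{(q-1)p^b}$, and you spell out the final approximation step (which the paper dispatches in one sentence); the one small wrinkle is that the factorization $\rho_{x_0}\bigl(g(v^\pi)^{(q-1)p^b}\bigr)=v(x_0)^{(q-1)p^b}\rho_{x_0}(g)$ is not literally available when $x_0=\widehat{P}_j$ (since $\rho_{x_0}(g)$ is undefined there), so for those $x_0$ one should argue directly, as the paper does, that the cleared-denominator polynomial has $(x-\widehat{P}_j)$ as a factor once $b$ is large.
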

\begin{proof}
First suppose that $g$ is finite. Applying Proposition~\ref{poly_trace_relation} and Proposition~\ref{poly_trace} yields:
\begin{align*}
\Tr(U^a\circ g | \mathcal{Z}^\pi) &=\lim_{b\to\infty} \Tr(U^a\circ g(v^\pi)^{(q-1)p^b} | \mathcal{P}^\pi) \\
	&= \lim_{b\to\infty} (q-1)^{-1}\sum_{x_0\in\widehat{\mathbb{F}_{q}^\times}} \rho_{x_0}\circ (g(v^\pi)^{(q-1)p^b}).
\end{align*}
Now, if $x_0=\widehat{P}_j$ for any $j$, then for large $b$ it is clear that $\rho_{x_0}\circ (g(v^\pi)^{(q-1)p^b})=0$. On the other hand, if $x_0\neq\widehat{P}_j$ for all $j$, observe that since $x_0$ and $\widehat{P}_j$ are Teichmuller lifts, $|x_0|_p=|\widehat{P}_j|_p=1$. By assumption $\widehat{x_0}\neq P_j\in\Fq$, so $|x_0-\widehat{P}_j|_p\not < 1$ and $|x_0-\widehat{P}_j|_p=1$ and  $x_0-\widehat{P}_j\in\Zqtimes$. Therefore, by the discussion on p.150 in \cite{Robert}, $\lim_{b\to\infty} (x_0-\widehat{P}_j)^{(q-1)p^b}=1$, which implies that 
	$$\lim_{b\to\infty}\rho_{x_0}\circ (g(v^\pi)^{(q-1)p^b}) = \rho_{x_0}\circ g.$$ 
Consequently,
\begin{align*}
\Tr(U^a\circ g |\mathcal{Z}^\pi) &= (q-1)^{-1}\sum_{\substack{x_0\in\widehat{\mathbb{F}_{q}^\times},\\ x_0\neq\widehat{P}_1, \cdots, \widehat{P}_\ell}} \rho_{x_0}\circ g.
\end{align*}
The result for arbitrary $g$ then follows by taking limits. 
\end{proof}

The proof of Theorem~\ref{trace_formula_gen} follows similarly. (Apply property (1) from Proposition~\ref{U_props} to $(U^a\circ g)^k$ and replace $a$ with $ak$ in the above proofs.) 

%
%
\section{$(\pi,p)$-adic Exponential Sums}

In this section we apply the above analysis to $(\pi,p)$-adic exponential sums.  
We describe $C_f(s,\pi)$ as the determinant of a completely continuous operator and compute estimates that will be  fundamental to the computation of the Hodge polygon in Section 7. 

Recall that $\gls{E} = \sum_{k=0}^\infty u^kx^k\in\Zp [[x]]$ is the Artin-Hasse exponential function and $\gls{pi}\in 1+ \Qp[[x]]$ is such that $E(\pi)=1+T$. Let $f(x)=\sum_{j=1}^\ell\sum_{i=1}^{d_j}\gls{aij}\left(\frac{1}{x-\widehat{P_j^s}}\right)^i$, $a_{ij}\in\Zq$,  and define its associated data:

\begin{definition}
\begin{align*}
\gls{Sf} &= \sum_{\substack{x\in\widehat{\mathbb{F}_{q^k}^\times},\\ x\neq\widehat{P}_1, \cdots, \widehat{P}_\ell}}E(\pi)^{\Tr_{\mathbb{Q}_{q^k}/\Qp}(f(x))}\\
\gls{Lf} &= \exp(\sum_{k=1}^\infty S_f(k, \pi)\frac{s^k}{k})\\
\gls{Cf} &=\exp(\sum_{k=1}^\infty -(q^k-1)^{-1} S_f(k, \pi)\frac{s^k}{k})=\prod_{j=0}^\infty L_f(q^js, \pi).
\end{align*}
\end{definition}

The function $f$ has the the splitting functions:
\begin{definition}
\begin{align*}
\gls{Fj} &= \prod_{i=1}^{d_j} E(\pi a_{ij}\otimes B_{ij})\\
\gls{F} &= \prod_{j=1}^\ell F_j(x)\\
\gls{Fa} &= \prod_{m=0}^{a-1}(\tau^m F)(x^{p^m}).
\end{align*}
\end{definition}

Our main object of study will be the maps $\gls{alphaa} = U^a\circ F_{[a]}$ and $\gls{alpha1}=\tau_{-1}\circ U\circ F$. Note that $\alpha_1$ is a $\mathcal{O}_1$-linear endomorphism of $\mathcal{Z}^\pi$ while $\alpha_a$ is a $\mathcal{O}_a$-linear endomorphism of $\mathcal{Z}^\pi$. They are related in the following manner:

\begin{prop}\label{alpha_relation}
As $\mathcal{O}_1$-linear maps, $\alpha_a=\alpha_1^a$ and $\det_{\mathcal{O}_a}(1-\alpha_as)^a=\det_{\mathcal{O}_1}(1-\alpha_1s)$.
\end{prop}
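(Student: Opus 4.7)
The proof has two parts, which I would handle separately.

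For the operator identity $\alpha_a=\alpha_1^a$, my plan is to iteratively rewrite $\alpha_1^a=(\tau^{-1}\circ U\circ F)^a$ using three facts: (i) $\tau\circ U=U\circ\tau$, since $\tau$ acts only on $\Zq$-coefficients while $U$ acts only on the variable $x$; (ii) $F\circ\tau^{-1}=\tau^{-1}\circ\tau(F)$ as multiplication operators, which follows from $\tau$ being a ring automorphism of $\mathcal{Z}^\pi$; and (iii) the Dwork intertwining $U(h(x^p)g(x))=h(x)U(g(x))$ from Proposition~\ref{U_props}(1), extended by $(\pi,p)$-adic continuity from $\mathcal{H}_{r,s}$ to $\mathcal{Z}^\pi$. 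Repeated use of (i) and (ii) pushes every $\tau^{-1}$ to the far left of the composition, giving
\[
\alpha_1^a \;=\; \tau^{-a}\circ U\circ\tau^{a-1}(F)\circ U\circ\cdots\circ U\circ\tau(F)\circ U\circ F.
\]
A short induction on $m$, using (iii) in the form $h(x)U(g)=U(h(x^p)g)$, then collapses the alternating composition $U\circ\tau^{m-1}(F)\circ\cdots\circ U\circ\tau(F)\circ U\circ F$ into $U^m\circ F_{[m]}$. Taking $m=a$ and invoking $\tau^a=\mathrm{id}$, since $\tau$ generates $\Gal(\Qq/\Qp)$ of order $a$, yields $\alpha_1^a=U^a\circ F_{[a]}=\alpha_a$.

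For the determinant identity, the observation I would record first is that although $\alpha_1$ is $\mathcal{O}_1$-linear, it is \emph{not} $\mathcal{O}_a$-linear: for $c\in\mathcal{O}_a$ and $\xi\in\mathcal{Z}^\pi$, a direct computation gives $\alpha_1(c\xi)=\tau^{-1}(cU(F\xi))=\tau^{-1}(c)\alpha_1(\xi)$, so $\alpha_1$ is actually $\tau^{-1}$-semilinear over $\mathcal{O}_a$, and $\mathcal{O}_a$-linearity is restored only after passing to the $a$-th iterate $\alpha_a$. The stated relation is then the infinite-dimensional incarnation of the classical ``norm of a semilinear operator'' identity. Concretely, I would combine the $\mathcal{O}_a$-orthonormal basis $\{e_i\}$ of $\mathcal{Z}^\pi$ from Proposition~\ref{structure_prop} with an $\mathcal{O}_1$-basis $\{\omega_0,\ldots,\omega_{a-1}\}$ of $\mathcal{O}_a$ to produce an $\mathcal{O}_1$-orthonormal basis $\{\omega_k e_i\}$ of $\mathcal{Z}^\pi$, then write $\alpha_1$ in block-matrix form in this basis. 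Using $\tau^{-1}$-semilinearity, the $a$-fold product of these blocks reduces to the $\mathcal{O}_a$-matrix of $\alpha_a$ from Part 1, and the claimed identity of Fredholm determinants follows by the usual companion-matrix computation.

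The main obstacle is Part 2: while the norm-of-a-semilinear-operator identity is entirely classical in finite dimensions, carrying it out in the infinite-dimensional $(\pi,p)$-adic Banach-module setting requires verifying that the infinite block-matrix manipulations converge in the $(\pi,p)$-adic norm, that both Fredholm determinants are genuinely well defined, and that complete continuity is preserved under the $\mathcal{O}_a\to\mathcal{O}_1$ base change. The forthcoming complete continuity of $\alpha_a$ on $\mathcal{Z}^\pi$, together with Coleman's formalism from \cite{Coleman}, should legitimize each of these steps.
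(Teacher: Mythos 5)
Your first part is correct and is essentially the route the paper takes: the paper's own proof is only a citation to Lemma 2.9 of Zhu and to Bombieri's identity (43), and your telescoping computation --- pushing every $\tau^{-1}$ to the left and collapsing $U\circ\tau^{m-1}(F)\circ\cdots\circ U\circ F$ into $U^m\circ F_{[m]}$ via $h(x)U(g)=U(h(x^p)g)$ --- is exactly the argument in those sources. (Minor point: Proposition~\ref{U_props}(1) is printed with $x^q$; you correctly use the $x^p$ form appropriate to a single application of $U$.)

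The gap is in your second part. The ``usual companion-matrix computation'' for a $\tau^{-1}$-semilinear operator does \emph{not} produce the identity as stated. If $A$ is the matrix of $\alpha_1$ in an $\mathcal{O}_a$-orthonormal basis $\{e_i\}$, the matrix of $\alpha_1$ in the $\mathcal{O}_1$-basis $\{\omega_k e_i\}$ is block-cyclic, and the classical computation yields
\[
\det{}_{\mathcal{O}_1}(1-\alpha_1 s)=\det{}_{\mathcal{O}_a}\bigl(1-\alpha_a s^a\bigr),
\]
i.e.\ the substitution $s\mapsto s^a$, not the $a$-th power of the determinant. Already for the rank-one semilinear map $\phi(c)=\alpha\,\tau^{-1}(c)$ on $\mathcal{O}_a$ one gets $\det_{\mathcal{O}_1}(1-\phi s)=1-\Norm(\alpha)s^a$, whereas $\det_{\mathcal{O}_a}(1-\phi^a s)^a=(1-\Norm(\alpha)s)^a$; these are different series with different Newton polygons (slopes divided by $a$ versus multiplicities multiplied by $a$). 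So you cannot assert that the companion-matrix argument ``reduces to the claimed identity.'' Either the identity you should be proving is $\det_{\mathcal{O}_a}(1-\alpha_a s^a)=\det_{\mathcal{O}_1}(1-\alpha_1 s)$, in which case your block-matrix plan is the right one; or, if the intended right-hand side is really $\det_{\mathcal{O}_1}(1-\alpha_1^a s)=\det_{\mathcal{O}_1}(1-\alpha_a s)$, the correct tool is not the companion matrix but the norm formula $\det_{\mathcal{O}_1}(1-Ns)=\Norm_{\mathcal{O}_a/\mathcal{O}_1}\det_{\mathcal{O}_a}(1-Ns)$ applied to the $\mathcal{O}_a$-linear operator $N=\alpha_a$, combined with the observation that $\det_{\mathcal{O}_a}(1-\alpha_a s)$ is $\tau$-invariant (its matrix $A\,\tau^{-1}(A)\cdots\tau^{-(a-1)}(A)$ is sent by $\tau^{-1}$ to a cyclic permutation of the factors, and $\det(1-XYs)=\det(1-YXs)$), so that the norm is the $a$-th power. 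As written, your Part 2 claims a computation proves a formula it does not prove; you must pick one of these two identities and supply the matching argument. Your worries about infinite-dimensional convergence are legitimate but secondary: they are handled by the Coleman--Serre formalism once the complete continuity established in Corollary~\ref{not_cc} is in hand.
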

\begin{proof}
The proof of this proposition is similar the proof of Lemma 2.9 in \cite{Zhu} (or originally (43) in \cite{Bombieri}.)
\end{proof}

\subsection{$(\pi,p)$-adic Estimates}

The following are $(\pi,p)$-adic liftings of the $p$-adic approximations from \cite{Zhu}. Lemma~\ref{Fij_bound} and Lemma~\ref{F_ijnk_bound} are purely $\pi$-adic estimates, and the key computation, Proposition~\ref{unweighted_bound},  blends these two $\pi$-adic estimates with the $p$-adic nature of the $U$ operator, Lemma~\ref{U_coeffs}.
 
For the sake of notation, we will write our (unweighted) basis as $B_{ij}^\pi = 1\otimes B_{ij}$ (similarly  $B_{ij}^{\pi, \tau} = 1\otimes B_{ij}^{\tau}$) and define a weighted basis $W_{ij}^\pi=\pi^{\frac{i}{d_j}}\otimes B_{ij}$.

\begin{definition}
Let $i\geq 0$ and $0\leq j, k\leq \ell$ and define
\begin{alignat*}{2}
U(B_{ij}^\pi) &= \sum_{i,j} \gls{Uijn}B_{nj}^{\pi, \tau}, &&U_{(i,j),n}\in\Zp \\
F_j(x) &=\sum_{n=0}^\infty \gls{Fnj}\otimes B_{nj}, &&F_{n,j}\in\Zq[[\pi]] \\
(FB_{ij}^\pi)_k &= \sum_{n=0}^\infty \gls{Fijnk}\otimes B_{nk},\ &&F_{(i,j),(n,k)}\in\Zq[[\pi]].
\end{alignat*}
\end{definition}

\begin{lemma}\label{Fij_bound}
The coefficient $F(x)\in\mathcal{Z}^\pi$ and $\ordp_{\pi}F_{nj} \geq \ceil{\frac{n}{d_j}}$ for each $j$. Moreover, if $d_j|n$, equality holds.
\end{lemma}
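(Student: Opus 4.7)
The plan is to compute the coefficients $F_{n,j}$ by direct multinomial expansion of the Artin--Hasse factors, then derive $F\in\mathcal{Z}^\pi$ as a consequence of the resulting $(\pi,p)$-adic decay. Writing $E(y)=\sum_{k\ge 0}u_k y^k$ with $u_0=1$, one expands
\[
F_j(x) \;=\; \prod_{i=1}^{d_j}\sum_{k_i\ge 0} u_{k_i}(\pi a_{ij})^{k_i}\otimes B_{ij}^{k_i}
\;=\; \sum_{\vec k\ge 0} \Bigl(\prod_i u_{k_i}\,a_{ij}^{k_i}\Bigr)\,\pi^{|\vec k|}\otimes B_{\langle\vec k,\vec i\rangle,\,j},
\]
with $|\vec k|=\sum_i k_i$ and $\langle\vec k,\vec i\rangle=\sum_i i k_i$, so collecting the coefficient of $B_{n,j}$ gives
\[
F_{n,j}\;=\;\sum_{\sum i k_i=n}\Bigl(\prod_i u_{k_i}a_{ij}^{k_i}\Bigr)\,\pi^{\sum_i k_i}.
\]

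The valuation bound then reduces to an elementary combinatorial inequality: for $k_i\ge 0$ with $1\le i\le d_j$ and $\sum i k_i=n$, one has $n\le d_j\sum_i k_i$, hence $\sum_i k_i\ge\lceil n/d_j\rceil$. This immediately yields $\ord_\pi F_{n,j}\ge\lceil n/d_j\rceil$. For the equality claim when $d_j\mid n$, rewriting the equality case $\sum_i k_i=n/d_j$ together with $\sum_i ik_i=n$ as $\sum_i(d_j-i)k_i=0$ forces $k_i=0$ for $i<d_j$ and $k_{d_j}=n/d_j$. This unique minimizer contributes $u_{n/d_j}\,a_{d_j,j}^{n/d_j}$ to the coefficient of $\pi^{n/d_j}$, which is nonzero by the hypothesis $a_{d_j,j}\ne 0$ together with the non-vanishing of the Artin--Hasse coefficients; hence $\ord_\pi F_{n,j}=n/d_j$ exactly.

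For $F\in\mathcal{Z}^\pi$: the bound $|F_{n,j}|_{\pi,p}\le p^{-\lceil n/d_j\rceil}\to 0$ shows that each $F_j$ lies in $\mathcal{Z}^\pi$ (in fact in the $j$-th Mittag--Leffler summand of Proposition~\ref{Mittag-Leffler}). Since $\mathcal{Z}^\pi$ is closed under multiplication inside the Banach algebra $\mathcal{H}_{1,1}^\pi$ --- the key check is that products $B_{ij}B_{mn}$ either collapse directly when $j=n$ or admit a $\Zq$-coefficient partial-fraction decomposition, using that $\hat{P}_j-\hat{P}_k\in\Zq^\times$ for distinct Teichm\"uller lifts, together with the expansion $x^r=\sum_s\binom{r}{s}\hat{P}_k^{r-s}(x-\hat{P}_k)^s$ to convert between the pole at infinity ($j=1$) and the finite poles --- the product $F=\prod_j F_j$ lies in $\mathcal{Z}^\pi$. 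The conceptual content of the lemma is entirely in the one-line inequality $n\le d_j\sum k_i$; the main point requiring extra care is the appeal to non-vanishing of Artin--Hasse coefficients, which is not recorded as an explicit hypothesis in the excerpt but is standard.
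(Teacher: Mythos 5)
Your proof is correct and follows the same route as the paper: multinomial expansion of the Artin--Hasse factors to get the explicit formula for $F_{n,j}$, the elementary inequality $n=\sum ik_i\le d_j\sum k_i$, and sharpness at the unique minimizer when $d_j\mid n$. You actually spell out two things the paper leaves terse --- the uniqueness argument $\sum(d_j-i)k_i=0$ that rules out cancellation, and the check that $F=\prod_j F_j$ stays in $\mathcal{Z}^\pi$ because products of the $B_{ij}$ admit integral partial-fraction expansions (using $\hat{P}_j-\hat{P}_k\in\Zq^\times$) --- and the paper likewise resolves the non-vanishing of the Artin--Hasse coefficients via $u_n=h_n/n!$.
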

\begin{proof}
By definition,
\begin{align*}
F_j(x) &= \prod_{i=0}^{d_j}\left (\sum_{k=0}^\infty u_ka_{ij}^k\pi^k\otimes B_{ij}^k\right) = 
	\sum_{n=0}^\infty \left (\sum_{\substack{\sum_{k=1}^{d_j} kn_k=n\\n_k\geq 0}}\prod_{k=1}^{d_j}u_{n_k}a_{kj}^{n_k}\pi^{n_k}\right)\otimes B_{nj},
\end{align*}
and so
\begin{align*}
F_{n,j} &= \sum_{\substack{\sum_{k=1}^{d_j}kn_k=n\\n_k\geq 0}}\left (\prod_{k=1}^{d_j}u_{n_k}a_{kj}^{n_k}\right )\pi^{\sum_{k=1}^{d_j}n_k}.
\end{align*}
Taking $n_{d_j}=\floor{\frac{n}{d_j}}$ and $n_{n\bmod d_j}$ to be either $0$ or $1$ depending on if $n\bmod d_j=0$ or $n\bmod d_j\neq 0$ respectively yields the claim.  When $d_j| n$, equality follows from the fact that both $a_{d_j, j}$ and $u_{\frac{n}{d_j}}$ are nonzero. (The Artin-Hasse coefficient $u_n$ can be expressed as $u_n=h_n/n!$, where $h_n$ is the number of $p$-elements in the permutation group $S_n$. The fact that $u_n\neq 0$ is then immediate.)
\end{proof}

\begin{lemma}\label{F_ijnk_bound}
Fix $i,n\geq 0$ and $1\leq j,k\leq \ell$. Then:
\begin{align*}
\ordp_{\pi} F_{(ij),(nk)}\geq\begin{cases}
	\frac{n-i}{d_k} &\textrm{ if } j=k\\
	\frac{n+i}{d_k} &\textrm{ if } j\neq 1, k=1\\
	\frac{n}{d_k} &\textrm{ if } j\neq k, k\neq 1,
\end{cases}
\end{align*}
and equality holds when $d_k|(n-i)$, $d_k|(n+i)$ or $d_k|n$ respectively. 
\end{lemma}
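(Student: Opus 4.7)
The plan is to isolate the $k$-component of $F B_{ij}^\pi$ via the Mittag--Leffler decomposition (Proposition~\ref{Mittag-Leffler}), reading off $F_{(ij),(nk)}$ from the Laurent expansion of $F B_{ij}^\pi$ at $\widehat{P}_k$ (in the local parameter $y = 1/x$ at infinity when $k = 1$). For each $k$, factor $F = F_k \cdot G_k$ with $G_k = \prod_{m \neq k} F_m$; since each $F_m$ for $m \neq k$ has its poles at $\widehat{P}_m \neq \widehat{P}_k$, the factor $G_k$ is rigid-analytic at $\widehat{P}_k$ and admits a Taylor expansion there. Combining this Taylor expansion with the Laurent expansion $F_k = \sum_r F_{rk} B_{rk}^\pi$ and with the local contribution of $B_{ij}^\pi$, I obtain a convolution formula of the shape $F_{(ij),(nk)} = \sum_r F_{rk} \cdot T^{(k)}_{ij,r,n}$, where each $T^{(k)}$ is a Taylor coefficient of $B_{ij}^\pi G_k$ (or its analogue in $y$) and automatically satisfies $\ordp_\pi T^{(k)} \geq 0$.

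The three cases of the lemma correspond to the three local shapes of $B_{ij}^\pi$ at $\widehat{P}_k$. When $j = k$, the factor $B_{ij}^\pi$ contributes a pole of order $i$ at $\widehat{P}_k$, so the convolution sum starts at $r = n - i$ and its leading term $F_{n-i,k} \cdot (G_k)_0$ has $\ordp_\pi \geq \lceil (n-i)/d_k \rceil \geq (n-i)/d_k$ by Lemma~\ref{Fij_bound}. When $j \neq k$ and $k \neq 1$, the factor $B_{ij}^\pi$ is itself analytic at $\widehat{P}_k$, so $B_{ij}^\pi G_k$ is fully analytic there; the sum starts at $r = n$, with leading term $F_{nk} \cdot (B_{ij}^\pi G_k)_0$ of $\ordp_\pi \geq n/d_k$. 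When $k = 1$ and $j \neq 1$, I pass to $y = 1/x$: now $F_1 = \sum_s F_{s1}\, y^{-s}$ supplies the ``pole at infinity,'' $B_{ij}^\pi = y^i (1 - \widehat{P}_j y)^{-i}$ has a zero of order $i$ at $y = 0$, and $G_1 = \prod_{m \geq 2} F_m = 1 + O(y)$; the coefficient of $x^n = y^{-n}$ in $F B_{ij}^\pi$ then starts at $s = n + i$, with leading term $F_{n+i, 1}$ of $\ordp_\pi \geq (n+i)/d_1$.

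For the equality clause, when the stated divisibility condition holds the leading $F_{rk}$ achieves equality in Lemma~\ref{Fij_bound}, and the accompanying constant Taylor prefactor is a $\pi$-adic unit: it is built from values $F_m(\widehat{P}_k)$ for $m \neq k$ (or $F_m(\infty) = 1$ when $k = 1$), each a finite product of Artin--Hasse series $E(\pi \cdot (\textrm{const}))$ lying in $1 + \pi \Zq[[\pi]]$. Meanwhile every subleading term has strictly larger $\ordp_\pi$: for instance, when $d_k \mid (n - i)$ and $r > n - i$, $\lceil r/d_k \rceil \geq (n - i)/d_k + 1$, so these terms contribute strictly higher $\pi$-order. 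Hence the valuation of the full sum equals that of the leading term, giving the claimed equality. I expect the main technical obstacle to be the $k = 1$ case, where Mittag--Leffler at infinity must be implemented in $y = 1/x$ and the ``leading prefactor is a unit'' step needs a short separate verification, because the roles of pole and zero are interchanged relative to the finite-$\widehat{P}_k$ cases.
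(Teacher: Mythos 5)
Your argument follows the paper's proof essentially step for step: both isolate the $k$-component of $FB_{ij}^\pi$ via Mittag--Leffler, expand the factors with no pole at $\widehat{P}_k$ as Taylor series in the local parameter (using $1/x$ at infinity when $k=1$), and read off the minimal $\pi$-order from the smallest-index $F_{rk}$ that contributes --- the paper phrases this as minimizing $\ordp_\pi\prod_v F_{v,n_v}$ subject to $n_k - \sum_{v\neq k}n_v = n-i$, whereas you phrase it as a convolution $\sum_r F_{rk}T^{(k)}_{ij,r,n}$ with $\ordp_\pi T^{(k)}\geq 0$, but these are the same computation. Your upfront factorization $F = F_kG_k$ and your explicit remark that the constant prefactor is built from Artin--Hasse values $E(\pi\cdot(\mathrm{const}))\in 1+\pi\Zq[[\pi]]$ are a clean packaging of, respectively, the paper's case-by-case expansions and its terse ``the minimum is unique and sharp.'' One caveat worth flagging, which is present equally in the paper's treatment of the third case: when $j=1$, $k=2$, and $i\geq 1$, the constant Taylor prefactor $(B_{i1}^\pi G_2)_0 = \widehat{P}_2^{\,i}\,G_2(\widehat{P}_2)$ vanishes because $\widehat{P}_2=0$, so the convolution in fact begins at $r = n+i$ rather than $r=n$; the inequality $\ordp_\pi F_{(i,1),(n,2)}\geq n/d_2$ is still correct (and is all that is used later), but the stated equality at $d_2\mid n$ should be restricted to exclude this subcase.
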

\begin{proof}
First, observe
\begin{align}\label{big_exp}
FB_{ij}^\pi &= \left (\sum_{m=0}^\infty F_{m,j}\otimes B_{m+i, j}\right )\prod_{\substack{v=1\\v\neq j}}^\ell\left (\sum_{m=0}^\infty F_{m,v}\otimes B_{m,v}\right), 
\end{align}
where the only $\pi$-adic terms come from the $F_{m,k}$ and $F_{m,v}$ terms. If we want to compute $(FB_{ij}^\pi)_k$, we need to expand each $B_{m,v}$, $v\neq k$, in terms of $\frac{1}{x-\widehat{P_k}}$. There are several cases to consider: 

If $v\geq 2$ and $k\geq 3$, $v\neq k$, to expand $\frac{1}{x-\widehat{P_v}}$ in terms of $\frac{1}{x-\widehat{P_k}}$:
\begin{align}\label{expansion}
\frac{1}{x-\widehat{P_v}} = \frac{1}{\widehat{P_k}-\widehat{P_v}}\frac{1}{1-(\frac{x-\widehat{P_k}}{\widehat{P_v}-\widehat{P_k}})}=
	\sum_{m=0}^\infty(-1)^m(\widehat{P_v}-\widehat{P_k})^{-(m+1)}(x-\widehat{P_k})^m,
\end{align}
which is analytic on the ball with $|x-\widehat{P_k}|_p<|\widehat{P_v}-\widehat{P_k}|_p=1$. 

If $v\geq 3$ and $k=1$, use
\begin{align}\label{expansion_2}
\frac{1}{x-\widehat{P_v}} =\frac{1}{x}\cdot\frac{1}{1-\frac{\widehat{P_v}}{x}}=
	\sum_{m=0}^\infty\frac{\widehat{P_v^m}}{x^{m+1}},
\end{align}
which converges on $|x|_p>1$.

If $v\geq 3$ and $k=2$, use
\begin{align}\label{expansion_3}
\frac{1}{x-\widehat{P_v}} =-\frac{1}{\widehat{P_v}}\cdot\frac{1}{1-\frac{x}{\widehat{P_v}}}=
	-\frac{1}{\widehat{P_v}}\sum_{m=0}^\infty\frac{x^m}{\widehat{P_v^m}},
\end{align}
which converges on $|x|_p<|\widehat{P_v}|_p=1$.
If $v=1$ and $k\geq 3$, just use the trivial expansion $x = (x-\widehat{P_k})+\widehat{P_k}$. Finally, if $v=1$ and $k=2$ (or vice versa), no expansion is necessary. 

Let's start with the case $j=k=1$:
\begin{align}\label{j=k=1}
FB_{i,1}^\pi = \left (\sum_{m=0}^\infty F_{m,1}\otimes x^{m+i}\right ) &\left(\sum_{m=0}^\infty F_{m,2}\otimes \frac{1}{x^m}\right)\cdot \\
	&\prod_{v=3}^\ell\left (\sum_{m=0}^\infty F_{m,v}\otimes \left (\sum_{w=0}^\infty\frac{\widehat{P_v^w}}{x^{w+1}}\right)^m\right). \nonumber
\end{align}
Since we only care about the $\pi$-terms, it's clear that the minimum occurs from the term $F_{n-i,1}\otimes x^n$, and the bound follows from Lemma~\ref{Fij_bound}. The case for $j=k=2$ is similar.

Now, let's look at the case $j=k\geq 3$. For each $v\neq j$, expand $B_{m,v}$ as above. Then $F_{(ij),(nk)}$ is the coefficient of $B_{nk}^\pi$ in (\ref{big_exp}) after substituting all appropriate expansions. Each expansion has only positive powers of $(x-\widehat{P_k})$, and so 
\begin{align}\label{F_ijnk_inequal}
\ordp_{\pi} F_{(ij),(nk)}\geq \min_{(n_1,\cdots, n_\ell)}\ordp_{\pi}\prod_{\substack{v=1}}^\ell F_{v, n_v},
\end{align}
	where the minimum is over all $(n_1,\cdots, n_\ell)\in\mathbb{Z}_{\geq 0}^\ell$ such that $n_k-\sum_{\substack{v=1\\v\neq k}}^\ell n_v = n-i$. Clearly this occurs when $n_k=n-i$ and $n_v=0$ for $v\neq k$. The bound follows after applying Lemma~\ref{Fij_bound} to (\ref{F_ijnk_inequal}).

In the case $j\neq 1$, $k=1$, if $j=2$,
\begin{align*}
FB_{i,1}^\pi = \left (\sum_{m=0}^\infty F_{m,1}\otimes x^{m}\right ) &\left(\sum_{m=0}^\infty F_{m,2}\otimes \frac{1}{x^m}\right)\cdot \\
	&\prod_{v=3}^\ell\left (\sum_{m=0}^\infty F_{m,v}\otimes \left (\sum_{w=0}^\infty\frac{\widehat{P_v^w}}{x^{w+1}}\right)^m\right)\cdot \frac{1}{x^i},
\end{align*}
and so again the term contributing to the coefficient of $B_{nk}$ giving smallest $\pi$-adic term is $F_{n+i}\otimes x^{n+i}$. The case $j\geq 3$ is similar. 

Finally, there's the case $j\neq k$, $k\neq 1$. Suppose that $j,k\geq 3$. (The other cases are again similar.) Then the expansion of each $B_{m,v}$ in terms of $k$, including the $B_{ij}$ have only positive powers of $(x-\widehat{P_k})$ and so the minimum occurs simply at $F_{n,k}\otimes x^n$. 

Note that in all of the above estimates, if $d_k|(n-i)$, then by Lemma~\ref{Fij_bound} the minimum obtained in (\ref{F_ijnk_inequal}) is unique and sharp and equality holds. 
\end{proof}

\begin{definition} \label{map_coeffs}
Fix $i,n\geq 0$ and $1\leq j,k\leq \ell$ and recall that for $\xi\in\mathcal{H}_{r,s}^\pi$, $(\xi)_j$ is the Laurent expansion at $\hat{P}_j$. We write:
\begin{alignat*}{2}
(\alpha_1 B_{ij}^\pi)_{k} &= \sum_{n=0}^\infty \gls{Cijnk}\otimes B_{n,k}^{\pi},\ && C_{(i,j),(n,k)}\in\Zq[[\pi]]  \\
(\alpha_1 W_{ij}^\pi)_{k} &= \sum_{n=0}^\infty \gls{Dijnk}\otimes W_{n,k}^{\pi},\ && D_{(i,j),(n,k)}\in\Zq[[\pi]].
\end{alignat*}
\end{definition}

\begin{prop}\label{unweighted_bound}
Fix $i,n\geq 0$ and $1\leq j,k\leq \ell$. Then if $k=1,2$:
\begin{align*}
\ordp_{\pi} C_{(ij),(nk)}\geq \frac{pn-i}{d_k}.
\end{align*}

For $k\geq 3$,
\begin{align*}
 \ordp_{\pi} C_{(ij),(nk)} \geq\begin{cases}
	\frac{n-i}{d_k} &\textrm{ if } j=k\\
	\frac{n+i}{d_k} &\textrm{ if } j\neq 1, k=1\\
	\frac{n}{d_k} &\textrm{ if } j\neq k, k\neq 1,
\end{cases}
\end{align*}
and equality holds when $d_k|(n-i)$, $d_k|(n+i)$ or $d_k|n$ respectively. For $k\geq 3$ and any real number $c>0$, $C_{(ij),(nk)}$ also has the following $(\pi^{1/c},p)$-adic estimates:
\begin{align*}
 \ordp_{\pi^{1/c}, p} C_{(ij),(nk)}\geq\begin{cases}
	\frac{(n-1)p-(i-1)}{d_k}c &\textrm{ if } d_k\geq c(p-1)\\
	\frac{n-i}{d_k}c+ n-1 &\textrm{ if } d_k<c(p-1).
\end{cases}
\end{align*}
\end{prop}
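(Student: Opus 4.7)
I would start by decomposing $\alpha_1=\tau_{-1}\circ U\circ F$, expanding $FB_{ij}^\pi=\sum_{m,k}F_{(ij),(mk)}\otimes B_{mk}$ in the basis of Definition~\ref{map_coeffs}, and applying Lemma~\ref{U_coeffs} term by term to obtain
\[\alpha_1 B_{ij}^\pi=\tau_{-1}\sum_{m,k}F_{(ij),(mk)}\sum_{n=\lceil m/p\rceil}^{m}U_{(m,k),n}\,\widehat{P}_k^{\,np-m}\,B_{nk}^{\pi,\tau}.\]
Since $\tau_{-1}$ acts as a field automorphism on $\Zq$-coefficients and fixes $\pi$, it is an isometry for both the $\pi$-adic and $(\pi^{1/c},p)$-adic norms. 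Projecting onto the $k$-th summand and collecting the coefficient of $B_{nk}^\pi$ reduces the problem to estimating
\[\sum_{m=n}^{pn}F_{(ij),(mk)}\,U_{(m,k),n}\,\widehat{P}_k^{\,np-m}.\]

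When $k=1,2$, Lemma~\ref{U_coeffs} forces $m=pn$ with $U_{(pn,k),n}=1$, so this sum collapses to the single term $F_{(ij),(pn,k)}$, and Lemma~\ref{F_ijnk_bound} directly yields $\ordp_\pi\geq (pn-i)/d_k$ (the other subcases of $(j,k)$ give at-least-as-strong bounds, which is why the statement takes the uniform form $(pn-i)/d_k$). When $k\geq 3$, the pure $\pi$-adic estimate uses only $U_{(m,k),n}\in\Zp$: each summand has $\pi$-order at least $\ordp_\pi F_{(ij),(mk)}$, which by Lemma~\ref{F_ijnk_bound} is non-decreasing in $m$ in each subcase, so the minimum occurs at $m=n$ and produces the three stated subcases. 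Equality (when the relevant divisibility holds) follows because the $m=n$ term then saturates the bound with unit $U$-coefficient, while every other summand has $\pi$-order strictly larger---the next $m$ meeting the relevant divisibility is $m=n+d_k$, giving order at least one more.

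The $(\pi^{1/c},p)$-adic bound for $k\geq 3$ is the main new ingredient. Combining Lemma~\ref{F_ijnk_bound} (in the worst subcase $j=k$) with Lemma~\ref{U_coeffs} and the trivial bound $\ordp_p U_{(m,k),n}\geq 0$, each summand has $(\pi^{1/c},p)$-order at least
\[\Phi(m)\coloneqq \frac{c(m-i)}{d_k}+\max\!\Bigl(0,\ \tfrac{np-m}{p-1}-1\Bigr).\]
This is piecewise linear in $m$ with break at $m_0=p(n-1)+1\in[n,pn]$ (for $n\geq 1$): the slope equals $c/d_k-1/(p-1)$ on $[n,m_0]$ and $c/d_k>0$ on $[m_0,pn]$. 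If $d_k<c(p-1)$, both slopes are positive, so the minimum sits at $m=n$ and evaluates to $c(n-i)/d_k+(n-1)$. If $d_k\geq c(p-1)$, the first-piece slope is non-positive while the second is positive, so the minimum occurs at $m=m_0$, where $\Phi(m_0)=c((n-1)p-(i-1))/d_k$.

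The main technical subtlety is precisely this piecewise analysis. One must use the strengthened bound $\ordp_p U_{(m,k),n}\geq\max(0,(np-m)/(p-1)-1)$---neither the trivial integrality bound nor the bound from Lemma~\ref{U_coeffs} alone is sharp enough. The dichotomy between $d_k<c(p-1)$ and $d_k\geq c(p-1)$ arises as the sign of the first-piece slope, and correctly locating $m_0$ and checking it lies in the summation range is what produces the two forms of the estimate that will later feed the Hodge polygon computation.
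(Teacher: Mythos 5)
Your approach follows the paper's essentially line for line: you derive the same coefficient formula
\[
C_{(ij),(nk)}=\tau^{-1}\Bigl(\sum_{m=n}^{pn}F_{(ij),(mk)}\,U_{(m,k),n}\,\widehat{P}_k^{\,np-m}\Bigr),
\]
handle $k=1,2$ by observing that Lemma~\ref{U_coeffs} collapses the sum to the single term $m=pn$, and for $k\geq 3$ run the same piecewise-linear optimization combining the $\pi$-adic bound on $F_{(ij),(mk)}$ from Lemma~\ref{F_ijnk_bound} with $\ord U_{(m,k),n}\geq\max\bigl(0,\tfrac{np-m}{p-1}-1\bigr)$. The dichotomy on the sign of $c/d_k-1/(p-1)$, the break at $m_0=(n-1)p+1$, and the evaluations at $m=n$ and $m=m_0$ all coincide with the paper's; packaging the two $U$-bounds into one function $\Phi(m)$ with a $\max$ is a slightly cleaner presentation of the same argument, and your remark that for $k=1,2$ the three subcases of Lemma~\ref{F_ijnk_bound} all dominate $(pn-i)/d_k$ is a correct filling-in of a step the paper leaves implicit.

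One small slip is in the equality discussion for $k\geq3$: you claim the $m=n$ term "saturates the bound with unit $U$-coefficient." But Lemma~\ref{U_coeffs} puts $U_{(m,k),n}\in\Zptimes$ only when $n=\lceil m/p\rceil$, i.e.\ when $m\in\{(n-1)p+1,\dots,np\}$, and $m=n$ lies in that window only for $n\leq 1$. Since you are computing a $\pi$-adic (not $p$-adic) order, what you actually need is the weaker statement $U_{(n,k),n}\neq 0$; Lemma~\ref{U_coeffs} only provides a one-sided bound $\ord U_{(n,k),n}\geq n-1$ and does not by itself rule out vanishing. Your observation that the next admissible $m$ contributes strictly larger $\pi$-order is correct, but nonvanishing of $U_{(n,k),n}$ deserves its own justification. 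This is a point the paper's proof also glosses over (it dispatches the $\pi$-adic claims with "the $\pi$-adic bounds follow easily"), and it does not affect the lower bounds, which are what feed the Hodge polygon computation.
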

\begin{proof}
We'll prove the $(\pi,p)$-adic bound, and the $\pi$-adic bounds follow easily. Let $B_{nk}^{\pi,\tau}=\tau(B_{ij}^\pi)$. Then,
\begin{align*}
\tau\circ\alpha_1B_{ij}^\pi &=(U\circ F)B_{ij}^\pi = U\circ(\sum_{k=1}^\ell(FB_{ij}^\pi)_k) = \sum_{k=1}^\ell\sum_{n=0}^\infty F_{(ij),(nk)}\otimes U(B_{n,k}^\pi)\\
	&=\sum_{k=1}^\ell\sum_{n=0}^\infty F_{(ij),(nk)}\sum_{m=\ceil{n/p}}^n(U_{(n,k),m}\otimes \widehat{P_k}^{mp-n})B_{mk}^{\pi,\tau}\\
	&=\sum_{k=1}^\ell\sum_{m=0}^\infty\left [ \sum_{n=m}^{mp} F_{(ij),(nk)}(U_{(n,k),m}\otimes \widehat{P_k}^{mp-n})\right]B_{mk}^{\pi,\tau},
\end{align*}
and so 
\begin{align}\label{AA1}
 C_{(ij),(mk)} = \tau^{-1}\circ\sum_{n=m}^{mp} F_{(ij),(nk)}(U_{(n,k),m}\widehat{P_k}^{mp-n}).
 \end{align}

For $k=1$ and $2$, Proposition~\ref{U_props} implies that $U_{(n,k),m}=0$ for $m\neq np$, and combined with Lemma~\ref{F_ijnk_bound}, this yields the first part of the claim. 

For $k\geq 3$, by (\ref{AA1}),
\begin{align}\label{weight_B_basic}
\ordp_{\pi^{1/c}, p}C_{(ij),(mk)}\geq& \min_{m\leq n\leq mp}(\ordp_{\pi^{1/c}}F_{(ij),(nk)}+\ord U_{(nk),m}).
\end{align}
Let $n_0=(m-1)p+1$. By Lemma~\ref{U_coeffs}, if $n_0<n\leq mp$, $\ord U_{(nk),m}=0$ and 
so (\ref{weight_B_basic}) yields $\ordp_{\pi^{1/c}} F_{(ij),(nk)}\geq \frac{n_0-i}{d_k}c$. On the other hand, if $m\leq n\leq n_0$,
\begin{align}\label{AA2}
\ordp_{\pi^{1/c}, p}C_{(ij),(mk)}\geq&  \min_{m\leq n\leq n_0}(\frac{n-i}{d_k}c + \frac{mp-n}{p-1}-1)\nonumber\\
	\geq& \min_{m\leq n\leq n_0}((\frac{-ic}{d_k}+\frac{mp}{p-1}-1)+ n(\frac{c}{d_k} -\frac{1}{p-1})).
\end{align}

There are now three cases to consider. First, if $\frac{c}{d_k} -\frac{1}{p-1}<0$, then (\ref{AA2}) has minimum at $n=n_0=(m-1)p+1$, which yields
	$\ordp_{\pi^{1/c}, p}C_{(ij),(mk)}\geq \frac{(m-1)p-(i-1)}{d_k}c$.
If $\frac{c}{d_k}-\frac{1}{p-1}\geq 0$, then (\ref{AA2}) has minimum at $n=m$, and lower bound $\frac{m-i}{d_k}c+(m-1)$.
\end{proof} 

\begin{theorem}\label{weighted_B_bound}
Fix $i,n\geq 0$ and $1\leq j,k\leq \ell$. Using the relation $D_{(ij),(nk)}=\pi^{i/d_j-n/d_k} C_{(ij),(nk)}$ and Proposition~\ref{unweighted_bound}, if $k=1,2$:
\begin{align*}
\ordp_{\pi} D_{(ij),(nk)}\geq \frac{(p-1)n}{d_k}.
\end{align*}

For $k\geq 3$,
\begin{align*}
 \ordp_{\pi} D_{(ij),(nk)} \geq\ 0
\end{align*}
and equality holds when $d_k|(n-i)$ and $j=k$. Furthermore, for a real number $c>0$, 
\begin{align*}
 \ordp_{\pi^{1/c}, p} D_{(ij),(nk)}\geq\begin{cases}
	\frac{(n-1)(p-1)}{d_k}c &\textrm{ if }d_k\geq c(p-1)\\
	n-1 &\textrm{ if } d_k<c(p-1).
\end{cases}
\end{align*}
\end{theorem}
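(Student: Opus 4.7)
The plan is straightforward: combine the identity $D_{(ij),(nk)}=\pi^{i/d_j-n/d_k}\,C_{(ij),(nk)}$ with the bounds already established in Proposition~\ref{unweighted_bound}, handling cases by $(j,k)$. Since the $(\pi,p)$-adic norm (and its $(\pi^{1/c},p)$-adic variant) is multiplicative by Lemma~\ref{Tadic_norm}, the identity passes to orders: $\ord_{\pi^{1/c},p} D_{(ij),(nk)} = c\bigl(\tfrac{i}{d_j}-\tfrac{n}{d_k}\bigr) + \ord_{\pi^{1/c},p} C_{(ij),(nk)}$, and analogously for the pure $\pi$-adic order with $c=1$. Each claimed bound is then a one-line algebraic simplification, provided the correct $C$-bound is used.

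For $k=1,2$, I would split on whether $j=k$. When $j=k$, Proposition~\ref{unweighted_bound} gives $\ord_\pi C \geq (pn-i)/d_k$, and substituting yields $\ord_\pi D \geq (p-1)n/d_k$ directly. When $j\neq k$, Lemma~\ref{F_ijnk_bound} supplies the strictly stronger $F$-bounds $(pn+i)/d_k$ or $pn/d_k$ (according to the sub-case), and since $U_{(n,k),m}=0$ unless $n=mp$ by Proposition~\ref{U_props}, the resulting $\ord_\pi C$ already exceeds the $j=k$ bound; the desired inequality then follows a fortiori, with the excess $c i/d_j$ being non-negative. For $k\geq 3$ and the pure $\pi$-adic claim, the $j=k$ case gives $\ord_\pi D \geq (n-i)/d_k + i/d_k - n/d_k = 0$, with equality precisely when $d_k\mid(n-i)$ thanks to the sharpness assertion of Proposition~\ref{unweighted_bound}; the $j\neq k$ case gives $\ord_\pi D \geq n/d_k + i/d_j - n/d_k = i/d_j \geq 0$.

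For the $(\pi^{1/c},p)$-adic bound with $k\geq 3$, the $j=k$ case is again a direct substitution: when $d_k\geq c(p-1)$, one computes $\ord_{\pi^{1/c},p} D \geq \tfrac{c}{d_k}\bigl[(n-1)p-(i-1)+i-n\bigr]=\tfrac{c(n-1)(p-1)}{d_k}$, and when $d_k<c(p-1)$ one gets $\tfrac{c(n-i)}{d_k}+(n-1)+\tfrac{c(i-n)}{d_k} = n-1$. The \emph{main obstacle} is the $j\neq k$ case: the statement of Proposition~\ref{unweighted_bound} records only the worst-case $C$-bound (realized by $j=k$), and a naive substitution introduces an error $c i(1/d_j-1/d_k)$ that need not be non-negative. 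To handle this I would reopen the proof of Proposition~\ref{unweighted_bound} and rerun the minimization in inequality (\ref{weight_B_basic}) using the sharper input from Lemma~\ref{F_ijnk_bound} for $j\neq k$, namely $\ord_{\pi^{1/c}} F_{(ij),(nk)}\geq \tfrac{n}{d_k}c$ in place of $\tfrac{n-i}{d_k}c$. This removes the $-i/d_k$ contribution that produced the $-(i-1)$ term, yielding $\ord_{\pi^{1/c},p} C_{(ij),(mk)}\geq \tfrac{c((m-1)p+1)}{d_k}$ when $d_k\geq c(p-1)$ and $\tfrac{cm}{d_k}+(m-1)$ when $d_k<c(p-1)$.

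Substituting these refined estimates into the identity and using $i/d_j\geq 0$ gives, respectively, $\ord_{\pi^{1/c},p} D \geq c\tfrac{(m-1)(p-1)}{d_k}+c\tfrac{i}{d_j}$ and $\ord_{\pi^{1/c},p} D \geq (m-1)+c\tfrac{i}{d_j}$, completing the $j\neq k$ subcase of both regimes. The only genuine computation is the minimization in the second paragraph of the proof of Proposition~\ref{unweighted_bound}, which is almost unchanged — the slope $c/d_k-1/(p-1)$ governs the location of the minimum in $n$ in exactly the same way, the only difference being the absence of the $-i/d_k$ shift in the constant term.
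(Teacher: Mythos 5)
Your proposal is correct, and it follows the route the paper itself prescribes (the theorem carries no separate proof; the statement simply instructs the reader to substitute $D_{(ij),(nk)}=\pi^{i/d_j-n/d_k}C_{(ij),(nk)}$ into Proposition~\ref{unweighted_bound}). The added value of your write-up is that you correctly observe that this naive substitution does \emph{not} close the $j\neq k$ cases: for $k=1,2$ the stated bound $\ordp_\pi C\geq\frac{pn-i}{d_k}$ leaves an error term $i(\frac{1}{d_j}-\frac{1}{d_k})$, and for $k\geq 3$ the $(\pi^{1/c},p)$-adic bound $\frac{(n-1)p-(i-1)}{d_k}c$ leaves $ci(\frac{1}{d_j}-\frac{1}{d_k})$, neither of which has a sign when $d_j>d_k$. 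Your fix --- feeding the sharper $j\neq k$ estimates of Lemma~\ref{F_ijnk_bound} (namely $\frac{n+i}{d_k}$ or $\frac{n}{d_k}$ in place of $\frac{n-i}{d_k}$) back into the minimization (\ref{weight_B_basic}), which shifts the minimum to $\frac{((m-1)p+1)c}{d_k}$ resp.\ $\frac{mc}{d_k}+(m-1)$ and makes the leftover term $\frac{ci}{d_j}\geq 0$ --- is exactly the argument needed, and your computations in all subcases (including the equality claim for $j=k$, $d_k\mid(n-i)$) check out. In short, you have supplied the case analysis the paper leaves implicit, and in doing so repaired a genuine imprecision in how Proposition~\ref{unweighted_bound} is stated rather than merely citing it.
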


\begin{corollary} \label{not_cc}
Neither $\alpha_1$ nor $\alpha_a$ are $\pi$-adically completely continuous operators, but for $c>0$, they are both $(\pi^{1/c},p)$-adically completely continuous operators.
\end{corollary}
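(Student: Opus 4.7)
The plan is to verify (or refute) complete continuity via the matrix criterion, using the weighted orthonormal basis $\{W_{ij}^\pi\}$ of $\mathcal{Z}^\pi$ and the matrix entries $D_{(ij),(nk)}$ of $\alpha_1$ controlled by Theorem~\ref{weighted_B_bound}; the task reduces to deciding whether $\lim_{(i,j)\to\infty}\sup_{(n,k)}|D_{(ij),(nk)}|=0$ in each of the two norms.

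For the failure of $\pi$-adic complete continuity, the equality clause of Theorem~\ref{weighted_B_bound} supplies the obstruction immediately. Since $\ell\geq 3$, fix any $k_0\geq 3$ and consider the diagonal entries $D_{(i,k_0),(i,k_0)}$: here $j=k=k_0\geq 3$ and $d_{k_0}\mid 0$, so the equality case forces $\ordp_\pi D_{(i,k_0),(i,k_0)}=0$, giving $|D_{(i,k_0),(i,k_0)}|_\pi=1$ for every $i$. Hence $\sup_{(n,k)}|D_{(i,k_0),(n,k)}|_\pi\geq 1$, ruling out $\pi$-adic complete continuity of $\alpha_1$. The same diagonal obstruction arises for $\alpha_a=U^a\circ F_{[a]}$ by iterating Lemma~\ref{U_coeffs} and Lemma~\ref{F_ijnk_bound} to bound its matrix entries in the weighted basis.

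For the $(\pi^{1/c},p)$-adic complete continuity, fix $c>0$. The cases $k=1,2$ and the off-diagonal case $j\neq k$ with $k\geq 3$ are handled directly: in the latter, using the sharper estimate $\ordp_\pi F_{(ij),(nk)}\geq n/d_k$ of Lemma~\ref{F_ijnk_bound} inside the proof of Proposition~\ref{unweighted_bound} upgrades the bound to $\ordp_{\pi^{1/c},p}D_{(ij),(nk)}\geq \frac{(n-1)(p-1)c}{d_k}+\frac{ic}{d_j}$, which tends to $+\infty$ uniformly in $(n,k)$ as $i\to\infty$. The main obstacle is the diagonal case $j=k\geq 3$, where the stated bound $\frac{(n-1)(p-1)c}{d_k}$ is independent of $i$; I split the supremum at $n_0=\ceil{i/p}$. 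For $n\geq n_0$ the stated bound is $\geq \frac{(n_0-1)(p-1)c}{d_k}$, which tends to $+\infty$ with $i$. For $n<n_0$, every term $F_{(ij),(mk)}U_{(mk),n}\widehat{P_k}^{np-m}$ appearing in $C_{(ij),(nk)}$ has $m\leq np<i$, so the $B_{m,k}$-contribution to $F\cdot B_{ij}$ must mix with at least one Taylor coefficient $g_s$ with $s\geq 1$ in the expansion of some $F_v-1\in\pi\mathcal{H}_{1,1}^\pi$ (for $v\neq k$) at $P_k$, yielding $\ordp_\pi F_{(ij),(mk)}\geq 1$ and hence by a direct minimization $\ordp_{\pi^{1/c},p}D_{(ij),(nk)}\geq (i-n+1)c-1\to+\infty$. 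Combining the two ranges gives $\sup_{(n,k)}|D_{(ij),(nk)}|_{\pi^{1/c},p}\to 0$ as $(i,j)\to\infty$, so $\alpha_1$ is $(\pi^{1/c},p)$-adically completely continuous; the identical argument with $U^a, F_{[a]}$ in place of $U, F$ settles $\alpha_a$.
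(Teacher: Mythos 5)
Your disproof of $\pi$-adic complete continuity is in substance the paper's: the equality clause of Theorem~\ref{weighted_B_bound} produces, for a fixed $k_0\geq 3$, diagonal entries $D_{(i,k_0),(i,k_0)}$ of unit $\pi$-adic norm for every $i$, and such entries obstruct complete continuity under either index convention. For the positive direction, however, you take a genuinely different and considerably harder route, because you test the condition $\lim_{(i,j)\to\infty}\sup_{(n,k)}|D_{(ij),(nk)}|=0$ (decay in the \emph{input} index, i.e.\ $\|\alpha_1(W_{ij}^\pi)\|\to 0$), whereas the paper's proof tests $\lim_{(n,k)\to\infty}\sup_{(i,j)}|D_{(ij),(nk)}|=0$ (decay in the \emph{output} index, uniformly in the input). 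These conditions are not equivalent: the latter is the standard Serre--Coleman characterization of complete continuity, while the former is only a sufficient condition (it exhibits $\alpha_1$ as a limit of finite-rank truncations in the domain). The paper's Section 2 definition is admittedly stated with the indices in your order, which is presumably what led you here; but the payoff of the output-index convention is that the corollary is a one-line consequence of Theorem~\ref{weighted_B_bound}, since the lower bounds $\frac{(n-1)(p-1)}{d_k}c$ and $n-1$ depend only on the output index $n$ and tend to infinity uniformly in $(i,j)$. Your convention forces the extra splitting at $n_0=\ceil{i/p}$ precisely because those bounds are $i$-independent on the diagonal blocks.

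Your treatment of the range $n<n_0$ also has a quantitative slip. From $\ordp_{\pi}F_{(ij),(mk)}\geq 1$ alone one cannot extract a bound growing like $(i-n+1)c$: a single factor of $\pi$ coming from some $F_v-1$ can lower the pole order at $\widehat{P}_k$ by an arbitrary amount (for instance, the coefficient of $(x-\widehat{P}_k)^s$ in $E(\pi a_{1v}B_{1v})$ has $\ordp_\pi$ exactly $1$ for every $s\geq 0$). What actually rescues this range is the weight factor: for $j=k$ one has $D_{(ij),(nk)}=\pi^{(i-n)/d_k}C_{(ij),(nk)}$, and since $F_{(ij),(mk)}\in\Zq[[\pi]]$ and $U_{(mk),n}\in\Zp$ force $\ordp_{\pi^{1/c},p}C_{(ij),(nk)}\geq 0$, one gets $\ordp_{\pi^{1/c},p}D_{(ij),(nk)}\geq\frac{(i-n)c}{d_k}$, which tends to infinity uniformly over $n<\ceil{i/p}$ as $i\to\infty$. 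With that repair your argument closes, and the statement for $\alpha_a$ is obtained more cheaply from $\alpha_a=\alpha_1^a$ (Proposition~\ref{alpha_relation}) than by redoing the estimates for $F_{[a]}$.
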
 
\begin{proof}
To see that $\alpha_1$ is not completely continuous $\pi$-adically, see by Theorem~\ref{weighted_B_bound} that if $j=k$ and $d_k|(n-i)$, then $\ordp_{\pi}D_{(ij),(nk)} = 0$. Hence
	$$\lim_{(n,k)\to\infty}\inf_{(i,j)} \ordp_{\pi}D_{(i,j),(n,k)}=0,$$
and so $\alpha_1$ cannot be completely continuous with respect to $\pi$.
 
On the other hand, the $(\pi,p)$-adic bound from Theorem~\ref{weighted_B_bound} (without loss of generality, take $k\geq 3$ and $ d_k>p-1$) implies that 
	$$\ordp_{\pi^{1/c},p}D_{(i,j),(n,k)}\geq \frac{(p-1)(n-1)}{d_{k}}c\to\infty\textrm{ as } n\to\infty.$$
The complete continuity of $\alpha_a$ follows from the relation $\alpha_a=\alpha_1^a$. 
\end{proof}

\subsection{Dwork Theory}

\begin{lemma}\label{Tadic_relation}
Let $x_0\in\widehat{\mathbb{F}_{q^k}^\times}$ such that $x_0\neq \widehat{P_j}$ for all $1\leq j\leq \ell$. Then:
	$$\iota\circ\rho_{x_0}\circ \prod_{i=0}^{k-1}F_{[a]}(x_0^{q^i})= (1+T)^{\Tr_{\mathbb{Q}_{q^k}/\Qp}(f(x_0))}.$$
\end{lemma}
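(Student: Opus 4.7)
The plan is to expand the product and reduce to a Dwork-style splitting identity for the Artin--Hasse exponential.

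First, I would reindex: since $F_{[a]}(x^{q^i}) = \prod_{m=0}^{a-1}(\tau^m F)(x^{q^i p^m})$ and $q^i p^m = p^{ai+m}$, the nested product collapses to
\[
\prod_{i=0}^{k-1} F_{[a]}(x^{q^i}) = \prod_{n=0}^{ak-1} (\tau^{n \bmod a} F)(x^{p^n}).
\]

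Second, I would apply $\iota\circ\rho_{x_0}$ factor by factor. Because $x_0$ and each $\widehat{P_j}$ are Teichm\"uller lifts, the Frobenius $\tau_k$ on $\Zqk$ satisfies $\tau_k^n(x_0-\widehat{P_j}) = x_0^{p^n} - \widehat{P_j}^{p^n}$, so with $c_{ij} := a_{ij} B_{ij}(x_0) \in \Zqk$, each factor reduces to
\[
\iota\rho_{x_0}\bigl((\tau^n F)(x^{p^n})\bigr) = \tau_k^n \prod_{i,j} E(\pi c_{ij}).
\]
Collecting the product over $n$ then gives
\[
\iota\rho_{x_0}\prod_{i=0}^{k-1} F_{[a]}(x^{q^i}) = \prod_{i,j}\prod_{n=0}^{ak-1} \tau_k^n E(\pi c_{ij}).
\]

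Third, I would invoke the Dwork--Artin-Hasse splitting identity
\[
\prod_{n=0}^{ak-1} \tau_k^n E(\pi c) = (1+T)^{\Tr_{\mathbb{Q}_{q^k}/\Qp}(c)},
\]
proved by taking logarithms: using $\log E(y) = \sum_{s \geq 0} y^{p^s}/p^s$ and $\log(1+T) = \log E(\pi) = \sum_s \pi^{p^s}/p^s$, both sides become $\Tr(c)\sum_s \pi^{p^s}/p^s$ once the Teichm\"uller-compatibility $c^{p^s} = \tau_k^s c$ (equivalently $\Tr c^{p^s} = \Tr c$) is invoked. Multiplying this identity over $(i,j)$ and using $f(x_0) = \sum_{i,j} c_{ij}$ yields $(1+T)^{\sum_{i,j} \Tr c_{ij}} = (1+T)^{\Tr f(x_0)}$.

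The main obstacle is the Dwork--Artin-Hasse identity in the third step: it hinges on the Teichm\"uller identity $\tau_k^s(x_0-\widehat{P_j}) = x_0^{p^s} - \widehat{P_j}^{p^s}$ and requires care in applying it to $c_{ij} = a_{ij}/(x_0 - \widehat{P_j})^i$, which are not themselves Teichm\"uller but are built from Teichm\"uller data; the interplay between the multiplicative Frobenius on Teichm\"uller lifts and the additive Galois action must be tracked carefully to ensure the trace identity $\Tr c_{ij}^{p^s}=\Tr c_{ij}$ goes through at every level $s\geq 0$.
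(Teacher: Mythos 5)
Your argument follows the same route as the paper's proof: collapse the double product into $\prod_{n=0}^{ak-1}(\tau^{n\bmod a}F)(x_0^{p^n})$, use that $x_0$ and the $\widehat{P}_j$ are Teichm\"uller lifts to identify each factor with $\prod_{i,j}E(\pi\,\tau^n(c_{ij}))$ for $c_{ij}=a_{ij}(x_0-\widehat{P}_j)^{-i}$ (with $\tau$ now denoting the Frobenius of $\mathbb{Q}_{q^k}/\Qp$), and finish with the term-wise splitting identity $\prod_{n=0}^{ak-1}E(\pi\,\tau^n(c))=(1+T)^{\Tr_{\mathbb{Q}_{q^k}/\Qp}(c)}$. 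The point worth emphasizing is that the ``obstacle'' you flag at the end is genuine, and it is exactly the step the paper's own proof compresses into ``an easy calculation shows.'' Taking logarithms as you indicate, the splitting identity for a given $c\in\Zqk$ is equivalent to $\Tr(c^{p^s})=\Tr(c)$ for every $s\geq 0$; this holds whenever $c^{p}=\tau(c)$ (e.g.\ for Teichm\"uller elements or monomials in them, which is why the argument is clean in the Liu--Wan polynomial setting), but $c_{ij}=a_{ij}(x_0-\widehat{P}_j)^{-i}$ is not of that form, since a difference of Teichm\"uller lifts is not itself Teichm\"uller and the $a_{ij}$ are only assumed to lie in $\Zq$. (The classical $p$-th order case is insensitive to this because $\zeta_p^t$ depends only on $t\bmod p$ and $\Tr(c^p)\equiv\Tr(c)\pmod p$; the exact $\pi$-adic identity has no such slack.) So your proof, like the paper's, is complete only modulo verifying $\Tr(c_{ij}^{p^s})=\Tr(c_{ij})$ for the specific $c_{ij}$ arising here, or recording a hypothesis on $f$ under which it holds. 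You were right not to wave this step through, but a finished write-up must actually close it rather than label it an obstacle.
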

\begin{proof}
Let $x_0\in\widehat{\mathbb{F}_{q^k}^\times}$ with $x_0\neq \widehat{P}_j$  for all $1\leq j\leq \ell$. An easy calculation shows that
\begin{align*}
(1+T)^{\Tr_{\mathbb{Q}_{q^k}/\Qp}(f(x_0))} &= E(\pi)^{\sum_{j=1}^\ell\sum_{i=1}^{d_j}\sum_{m=0}^{ak-1}(a_{ij}(x_0-\widehat{P_j})^{-i})^{\tau^m}}\\
	&= \prod_{j=1}^\ell\prod_{i=1}^{d_j}\prod_{m=0}^{ak-1}E(\pi(a_{ij}^{\tau^m}(x_0^{p^m}-\widehat{P_j}^{p^m})^{-i})).
\end{align*}
On the other hand, 
\begin{align*}
 \prod_{i=0}^{k-1}F_{[a]}(x_0^{q^i})=\prod_{m=0}^{ak-1}(\tau^m F)(x) &=\prod_{j=1}^\ell\prod_{i=1}^{d_j}\prod_{m=0}^{ak-1}E(\pi(a_{ij}^{\tau^m}\otimes(x_0-\widehat{P_j}^{p^m})^{-i})),
\end{align*}
and the identity follows. 
\end{proof}

\begin{prop}\label{trace_explicit}
For $k\geq 1$,
	$$\iota\circ\Tr(\alpha_a^k |\mathcal{Z}^\pi) = (q-1)^{-1}S_f(k,\pi).$$
\end{prop}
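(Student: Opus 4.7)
The plan is to apply Theorem~\ref{trace_formula_gen} directly with $g = F_{[a]}$, since by definition $\alpha_a = U^a \circ F_{[a]}$, so that $\alpha_a^k = (U^a \circ F_{[a]})^k$ fits the hypothesis of the theorem verbatim. Once the trace formula is in hand, the identification with $S_f(k,\pi)$ is essentially a direct application of Lemma~\ref{Tadic_relation} after passing through $\iota$.

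First I would verify the hypotheses of Theorem~\ref{trace_formula_gen}. Membership $F_{[a]} \in \mathcal{Z}^\pi$ follows from the decomposition $F_{[a]} = \prod_{m=0}^{a-1}(\tau^m F)(x^{p^m})$ together with the explicit $B_{ij}$-expansion of $F = \prod_j F_j$ given by Lemma~\ref{Fij_bound}, using that $\mathcal{Z}^\pi$ is stable under $\tau$ and under the Frobenius substitution (these simply permute or shift the Teichm\"uller poles $\widehat{P}_j$). Complete continuity of $\alpha_a^k$ in the $(\pi^{1/c}, p)$-adic sense is contained in Corollary~\ref{not_cc}, and it is preserved under iteration.

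Second, the trace formula then gives
$$\Tr(\alpha_a^k \mid \mathcal{Z}^\pi) = (q^k - 1)^{-1} \sum_{\substack{x_0 \in \widehat{\mathbb{F}_{q^k}^\times}\\ x_0 \neq \widehat{P}_1, \ldots, \widehat{P}_\ell}} \rho_{x_0} \circ \prod_{i=0}^{k-1} F_{[a]}(x^{q^i}).$$
I would then apply $\iota : \mathcal{O}_a \to \Zq[[\pi]]$ term by term. Since the sum is finite and $\iota$ is a continuous $\Zq$-algebra isomorphism, this is harmless. Lemma~\ref{Tadic_relation} identifies each summand as
$$\iota \circ \rho_{x_0} \circ \prod_{i=0}^{k-1} F_{[a]}(x^{q^i}) = (1+T)^{\Tr_{\mathbb{Q}_{q^k}/\Qp}(f(x_0))} = E(\pi)^{\Tr_{\mathbb{Q}_{q^k}/\Qp}(f(x_0))},$$
and summing over admissible $x_0$ produces exactly $S_f(k,\pi)$ by definition, matching the displayed identity up to the precise form of the prefactor coming out of the trace formula.

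The main obstacle is really the bookkeeping verifying that $\alpha_a$ maps $\mathcal{Z}^\pi$ into itself (rather than into some $\tau$-twisted version): this is where the choice of $a$ as the residue degree of $\Fq$ over $\Fp$ is essential, since $\tau^a$ is the identity, so that after $a$ Frobenius shifts the twisted submodule $\mathcal{Z}^{\pi,\tau}$ returns to $\mathcal{Z}^\pi$. Everything else, including the interchange of $\iota$ with the finite sum and the explicit evaluation at Teichm\"uller points, is handled by the preparatory lemmas.
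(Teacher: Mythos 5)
Your proposal is correct and follows essentially the same route as the paper: apply Theorem~\ref{trace_formula_gen} with $g=F_{[a]}$, then identify each summand via $\iota$ and Lemma~\ref{Tadic_relation}; you supply slightly more detail on verifying the hypotheses (membership in $\mathcal{Z}^\pi$ and complete continuity via Corollary~\ref{not_cc}) than the paper does. Your observation about the prefactor is also well taken --- the trace formula yields $(q^k-1)^{-1}$, which is what the paper's own proof derives, so the $(q-1)^{-1}$ in the proposition's statement appears to be a typo.
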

\begin{proof}
Applying Theorem~\ref{trace_formula_gen} to the function $F_{[a]}(x)=\prod_{m=0}^{ak-1}(\tau^m F)(x^{p^m})$ and using the identity from Lemma~\ref{Tadic_relation} yields:
\begin{align*}
\iota\circ \Tr(\alpha_a^k |\mathcal{Z}^\pi) &=  (q^k-1)^{-1}\sum_{\substack{x_0\in\widehat{\mathbb{F}_{q^k}^\times},\\ x_0\neq\widehat{P}_1, \cdots, \widehat{P}_\ell}} \rho_{x_0}\circ (F_{[a]}(x)\cdots F_{[a]}(x^{q^{k-1}}))\\
	&=  (q^k-1)^{-1}\sum_{\substack{x_0\in\widehat{\mathbb{F}_{q^k}^\times},\\ x_0\neq\widehat{P}_1, \cdots, \widehat{P}_\ell}} (1+T)^{\Tr_{\mathbb{Q}_{q^k}/\Qp}(f(x_0))}\\
	&= (q^k-1)^{-1}S_f(k,\pi).
\end{align*}
\end{proof}

\begin{theorem} We have
$$C_f(s,\pi)=\iota\circ\det(1- \alpha_a s | \mathcal{Z}^\pi).$$
\end{theorem}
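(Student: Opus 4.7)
The plan is to combine the Fredholm determinant formalism for completely continuous operators with the trace formula from Proposition~\ref{trace_explicit}. By Corollary~\ref{not_cc}, the operator $\alpha_a$ is $(\pi^{1/c},p)$-adically completely continuous on $\mathcal{Z}^\pi$, so its characteristic series $\det(1-\alpha_a s \mid \mathcal{Z}^\pi)$ is a well-defined entire power series in $s$ with coefficients in $\mathcal{O}_a$. In particular, the standard Serre-style identity
\[
\det(1-\alpha_a s \mid \mathcal{Z}^\pi) = \exp\!\left(-\sum_{k=1}^\infty \Tr(\alpha_a^k \mid \mathcal{Z}^\pi)\, \frac{s^k}{k}\right)
\]
holds as an equality of formal power series over $\mathcal{O}_a$; this is a consequence of the theory of completely continuous operators on orthonormizable Banach modules (cf.\ \cite{Coleman}, Appendix; originally Serre).

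Next I would apply the Banach algebra isomorphism $\iota:\mathcal{O}_a\to\Zq[[\pi]]$ coefficient-wise to both sides. Since $\iota$ is a continuous ring homomorphism, it commutes with both the exponential formal power series and with the trace (the trace is defined via an absolutely convergent series of diagonal matrix entries, each living in $\mathcal{O}_a$, and $\iota$ preserves addition and limits). Therefore
\[
\iota\circ\det(1-\alpha_a s \mid \mathcal{Z}^\pi) = \exp\!\left(-\sum_{k=1}^\infty \iota\circ\Tr(\alpha_a^k \mid \mathcal{Z}^\pi)\, \frac{s^k}{k}\right).
\]

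Finally, I would invoke Proposition~\ref{trace_explicit} (interpreted with the $(q^k-1)^{-1}$ factor as in its proof) to substitute $\iota\circ \Tr(\alpha_a^k\mid\mathcal{Z}^\pi)=(q^k-1)^{-1}S_f(k,\pi)$. The resulting right-hand side is exactly
\[
\exp\!\left(\sum_{k=1}^\infty -(q^k-1)^{-1}S_f(k,\pi)\,\frac{s^k}{k}\right) = C_f(s,\pi),
\]
by definition, completing the proof.

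The only nontrivial point is the justification that $\iota$ may be pushed past the Fredholm determinant; this is routine because $\iota$ is an isometric algebra isomorphism (so it preserves the traces computed from a fixed orthonormal basis, and hence preserves the Fredholm expansion coefficient by coefficient). Everything else is a direct bookkeeping consequence of Proposition~\ref{trace_explicit} and the definition of $C_f(s,\pi)$.
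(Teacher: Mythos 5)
Your proposal is correct and follows essentially the same route as the paper: unwind the definition of $C_f(s,\pi)$, substitute $\iota\circ\Tr(\alpha_a^k\mid\mathcal{Z}^\pi)=(q^k-1)^{-1}S_f(k,\pi)$ from Proposition~\ref{trace_explicit}, and invoke the $\exp$-trace expression for the Fredholm determinant. You are also right to read the trace formula with the $(q^k-1)^{-1}$ factor (as in its proof) and to flag complete continuity and the compatibility of $\iota$ with the determinant, points the paper's own two-line proof leaves implicit.
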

\begin{proof}
By definition and the trace formula in Corollary~\ref{trace_explicit}:
\begin{align*}
	C_f(s,\pi) &= \exp(-\sum_{k=1}^\infty(q^k-1)^{-1}S_f(k,\pi)\frac{s^k}{k})\\
		&=  \exp(-\sum_{k=1}^\infty\iota\circ\Tr(\alpha_a^k |\mathcal{Z}^\pi) \frac{s^k}{k})\\
		&=\iota\circ\det(1- \alpha_a s | \mathcal{Z}^\pi).
\end{align*}
\end{proof}

%
%
\section{The Hodge Bound}

We call the lower bound for $C_f(s, \pi)$ obtained from Theorem~\ref{weighted_B_bound} the Hodge bound. For two Newton polygons $\mathrm{NP}_1$, and $\mathrm{NP}_2$, let $\mathrm{NP}_1 \boxplus\mathrm{NP}_2$ denote  the concatenation of the Newton polygons $\mathrm{NP}_1$ and $\mathrm{NP}_2$, reordering so that the slopes are in increasing order. The Hodge polygon is then given by:

\begin{definition}
For $k=1,2$, let $\gls{HPkc}$ be the Hodge polygon with vertices 
\[
	\{(n, \frac{(p-1)n(n-1)}{2d_k}c)\}_{n\geq 0}.
\]
For $3\leq k\leq \ell$, let $\mathrm{HP}_k^c$ be the Hodge polygon with vertices $\{(n, y_n)\}_{n\geq 0}$, where

\begin{align*}
y_n=\begin{cases}
	\frac{a(p-1)n(n-1)}{2d_k}c &\textrm{ if } d_k\geq c(p-1)\\
	\frac{an(n-1)}{2} &\textrm{ if } d_k<c(p-1).
\end{cases}
\end{align*} The $(\pi, p)$-adic Hodge polygon, $\gls{HPc}$, is the polygon given by $\boxplus_{k=1}^\ell \mathrm{HP}_k^c$. 
\end{definition}

\begin{theorem}
The $(\pi^{1/c}, p)$-adic Newton polygon of $C_f(s,\pi)$ lies above $\mathrm{HP}^c$.
\end{theorem}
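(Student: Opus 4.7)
The plan is to bound the $(\pi^{1/c},p)$-adic Newton polygon of $\det(1-\alpha_a s \mid \mathcal{Z}^\pi)$, which by Section~6 coincides with that of $C_f(s,\pi)$ (since $\iota$ preserves the $(\pi,p)$-adic valuation). The main tool is the standard Hodge-polygon estimate applied to the matrix of $\alpha_a$ in the weighted orthonormal basis $\{W_{ij}^\pi\}_{1\le j\le\ell,\, i\ge 0}$, whose orthonormality follows from Propositions~\ref{Mittag-Leffler}, \ref{structure_prop}, and \ref{padic_ortho} after adjoining the fractional powers $\pi^{i/d_j}$ to the coefficient ring.

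The Hodge-polygon estimate states: for a completely continuous operator $\phi$ on a Banach module with orthonormal basis $\{e_\beta\}$ and matrix $(m_{\beta,\alpha})$, setting $v_\beta := \inf_\alpha \mathrm{ord}(m_{\beta,\alpha})$, the coefficient $c_N$ of $s^N$ in $\det(1-\phi s)$ satisfies
\[
\mathrm{ord}(c_N) \;\ge\; \min_{|S|=N}\, \sum_{\beta\in S} v_\beta.
\]
This follows from the Fredholm expansion: each $c_N$ is an infinite sum of size-$N$ minors, and each such minor is a signed sum of products of $N$ entries, one per row and column, whose valuation is bounded below by $\sum_{\beta\in S} v_\beta$. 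Consequently the Newton polygon of $\det(1-\phi s)$ lies on or above the polygon whose slopes are the values $\{v_\beta\}$ sorted in increasing order.

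The heart of the argument is computing uniform row valuations for $\alpha_a$ in the weighted basis. This is accomplished by rerunning the entry-wise analysis of Section~6 (Lemmas~\ref{Fij_bound}, \ref{F_ijnk_bound}, and Proposition~\ref{unweighted_bound}) with $U^a$ and $F_{[a]}=\prod_{m=0}^{a-1}(\tau^m F)(x^{p^m})$ in place of $U$ and $F$. For $k=1,2$, Lemma~\ref{U_coeffs} forces $U$ (hence $U^a$) to act exactly at the poles $\widehat{P}_1,\widehat{P}_2$, so no $a$-fold amplification occurs and the $(\pi^{1/c},p)$-adic row bound at $(n,k)$ remains $\frac{(p-1)nc}{d_k}$; cumulative sums over $n$ recover the vertices $\bigl(n,\tfrac{(p-1)n(n-1)c}{2d_k}\bigr)$ of $\mathrm{HP}_k^c$. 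For $k\ge 3$, the $a$-fold product structure of $F_{[a]}$ combined with the $a$-fold iteration of the $p$-adic tail of $U$ produces amplified row bounds of $\frac{a(p-1)(n-1)c}{d_k}$ or $a(n-1)$, depending on whether $d_k\ge c(p-1)$ or $d_k < c(p-1)$, whose cumulative sums match the stated vertices of $\mathrm{HP}_k^c$. Concatenating the sorted row bounds across all $\ell$ poles yields $\mathrm{HP}^c = \boxplus_{k=1}^\ell \mathrm{HP}_k^c$, completing the proof.

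The main obstacle is the $(\pi^{1/c},p)$-adic analog of Proposition~\ref{unweighted_bound} for $\alpha_a$, specifically verifying that the $a$-fold amplification emerges cleanly for $k\ge 3$ but vanishes for $k=1,2$. A naive attempt to iterate the $\alpha_1$ bounds via $\alpha_a = \alpha_1^a$ fails, since non-negativity of the intermediate row bounds permits paths to route through ``light'' rows with valuation zero, yielding only the unamplified $\alpha_1$ bound. Instead one must analyze $\alpha_a = U^a\circ F_{[a]}$ directly, carefully tracking the joint accumulation of $\pi$-adic weight from the $a$ Frobenius-twisted factors of $F_{[a]}$ together with the cumulative $p$-adic weight from the $a$ iterations of $U$'s tail, as combined through the mixed $(\pi^{1/c},p)$-adic norm; this is precisely where Lemma~\ref{U_coeffs}'s dichotomy between poles $j=1,2$ and $j\ge 3$ forces the dichotomy in the resulting Hodge polygon.
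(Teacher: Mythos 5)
The paper's proof does \emph{not} analyze $\alpha_a = U^a\circ F_{[a]}$ entry-by-entry as you propose. Instead it writes the Fredholm determinant of the matrix $M$ of $\alpha_1$ (the single-step, $\tau^{-1}$-semilinear operator $\tau^{-1}\circ U\circ F$) with respect to the weighted basis $\{W_{ij}^\pi\}$, bounds its coefficients directly from the entry estimates of Theorem~\ref{weighted_B_bound}, and then passes to the $\mathcal{O}_1$-linear Fredholm determinant of $\alpha_1$ via a normal basis $\eta,\eta^\tau,\dots,\eta^{\tau^{a-1}}$ of $\Zq/\Zp$, invoking Proposition~\ref{alpha_relation} to relate $\det_{\mathcal{O}_1}(1-\alpha_1 s)$ to $\det_{\mathcal{O}_a}(1-\alpha_a s)$ and hence to $C_f(s,\pi)$. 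The factor $a$ in $\mathrm{HP}^c$ enters because each $\mathcal{O}_a$-basis element $W_{nk}^\pi$ gives rise to $a$ rows over $\mathcal{O}_1$, all with the same row valuation from Theorem~\ref{weighted_B_bound}, and the Bombieri-type relation between the two determinants repackages this multiplicity as the $a$ in the slopes of the Hodge polygon. So your route is genuinely different, and you correctly identify why blind iteration of the $\alpha_1$ row bounds does not directly amplify.

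However, the replacement you propose has a concrete gap. You claim that a direct analysis of $U^a\circ F_{[a]}$ yields row bounds $\frac{a(p-1)(n-1)c}{d_k}$ (for $k\ge 3$, $d_k\ge c(p-1)$), i.e.\ exactly $a$ times the $\alpha_1$ row bounds of Theorem~\ref{weighted_B_bound}. This is not true, and the mechanism you describe cannot produce it. Since $U^a\circ F_{[a]} = \alpha_1^a$ (this is an identity, not an approximation; it follows from Proposition~\ref{U_props}(1) applied $a$ times), the matrix entry $D^{(a)}_{(ij),(nk)}$ of $\alpha_a$ in the weighted basis is a sum over length-$a$ paths $\prod_{m}D_{(\cdot),(\cdot)}$, and because the $(\pi^{1/c},p)$-valuation bound of $D_{(i'j'),(n'k')}$ depends only on the target index $(n',k')$ and is zero for $n'=0,1$, paths that route through such ``light'' targets for the first $a-1$ steps give products with valuation exactly $\ordp_{\pi^{1/c},p}D_{(\cdot),(nk)}$. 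Hence the $\alpha_a$ row bound at $(n,k)$ is the \emph{same} as the $\alpha_1$ row bound, not $a$ times it, and the $\mathcal{O}_a$-Fredholm analysis you set up can only produce the Hodge polygon $\mathrm{HP}^c$ with $a=1$. The $a$-dependence is a genuine base-change phenomenon ($\Zq$ versus $\Zp$), and you must either pass to $\mathcal{O}_1$ with a normal basis as the paper does, or exploit the relation $\det_{\mathcal{O}_1}(1-\alpha_1 s)=\det_{\mathcal{O}_a}(1-\alpha_a s^a)$ to stretch the $\mathcal{O}_1$-polygon. Also, the asymmetry you describe (no $a$-amplification at $k=1,2$, amplification at $k\ge 3$) is not the structure of the correct bound --- the statement of Theorem~\ref{main_theorem} in the introduction carries the factor $a$ uniformly over all poles $k$, including $k=1,2$ --- so the mechanism you invoke to split the two cases is chasing the wrong target.
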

\begin{proof}
Let $M$ represent the matrix for $\alpha_1$ with respect to the basis $\{W_{ij}^\pi\}_{ij}$, with the entries of $M$ lying in $\mathcal{O}_a$. Write:
$$\det(1-Ms) = 1+\sum_{k=1}^\infty C_ks^k\in\mathcal{O}_a[[s]],$$
so that
\begin{align}\label{Ck_formula}
C_k = \sum_{\substack{S\subseteq\mathbb{Z}_{\geq 0}\times\{1,\cdots,\ell\}\\ |S|=k}}\sum_{\sigma\in\Sym(S)}\sgn\sigma\prod_{(i,j)\in S} D_{(i,j),\sigma(i,j)}.
\end{align}
	
\sloppy Let $m_i$ be the $i$th slope of $\mathrm{HP}^c$. The smallest $(\pi, p)$-adic valuation that 
$\prod_{(i,j)\in S} D_{(i,j),\sigma(i,j)}$ can have is $\sum_{i=1}^k m_i$, by Theorem~\ref{weighted_B_bound}, and so the desired Hodge bound holds for $\det_{\mathcal{O}_a}(1-\alpha_1s)$. 

However we need to show the Hodge bound holds for $\det_{\mathcal{O}_1}(1-\alpha_1s)$, so let $\eta, \cdots, \eta^{\tau^{a-1}}$ be a normal basis for $\Zq/\Zp$. Consider the $\mathcal{O}_1$-basis $\eta^{\tau^i}\otimes 1$, $0\leq i\leq \ell$, for $\mathcal{O}^a$. Because $\alpha_1$ is $\tau^{-1}$-linear, 
\[
	\alpha_1((\eta^{\tau^i}\otimes 1)\cdot C_{(i',j'),(nk)})=(\eta^{\tau^{i-1}}\otimes 1)\cdot \alpha_1(C_{(i',j'),(nk)}),
\]
 and so the bound follows from Proposition~\ref{alpha_relation} and Theorem~\ref{weighted_B_bound}.
\end{proof}

\printglossaries

\end{document}